\documentclass[pdflatex,sn-mathphys-num]{sn-jnl}

\newgeometry{top=2.5cm, bottom=2.5cm, left=3cm, right=3cm}

\usepackage{graphicx}
\usepackage{multirow}
\usepackage{amsmath,amssymb,amsfonts}
\usepackage{amsthm}
\usepackage{mathrsfs}
\usepackage[title]{appendix}
\usepackage{xcolor}
\usepackage{textcomp}
\usepackage{manyfoot}
\usepackage{booktabs}
\usepackage{algorithm}
\usepackage{algorithmicx}
\usepackage{algpseudocode}
\usepackage{listings}
\usepackage{hyperref}

\usepackage{tikz}
\usetikzlibrary{trees, calc, decorations.pathreplacing}

\usepackage{eucal}          
\usepackage{amscd}          
\usepackage{latexsym}
\usepackage{mathtools}      
\usepackage{dsfont}

\usepackage[all]{xy}

\usepackage{tcolorbox}

\usepackage{float}          
\usepackage{multicol}

\usepackage{subcaption}

\usepackage{epigraph}

\theoremstyle{thmstyleone}
\newtheorem{thm}{Theorem}[section]

\newtheorem{lem}[thm]{Lemma}

\newtheorem{prop}[thm]{Proposition}
\newtheorem{proposition}[thm]{Proposition}

\newtheorem{cor}[thm]{Corollary}

\theoremstyle{thmstyletwo}
\newtheorem{example}[thm]{Example}

\theoremstyle{thmstylethree}
\newtheorem{definition}[thm]{Definition}

\newtheorem*{theorem*}{Theorem}
\newtheorem*{corollary*}{Corollary}

\newtheorem*{Satz*}{Satz}

\newtheorem*{Proposal*}{Proposal}

\begin{document}

\title[Título corto]{Ultrametric Graphons and Hierarchical Community Networks: Spectral Theory and Applications}

\author*[1]{\fnm{Ángel Alfredo} \sur{Morán Ledezma}}\email{angel.ledezma@kit.edu}

\affil*[1]{\orgdiv{Geodetic Institute}, \orgname{Karlsruhe Institute of Technology},
  \orgaddress{\street{Englerstr. 7}, \city{Karlsruhe}, \postcode{76131},
  \state{Karlsruhe}, \country{Germany}}}

\abstract{
We develop a theory of ultrametric graphons as limiting objects for 
random networks with nested hierarchical community structure. A graphon 
$W:[0,1]^2\to[0,1]$ is called ultrametric if $W(x,y)=w(d(x,y))$, 
where $d$ is an ultrametric on $[0,1]$ induced by a family of nested 
partitions and $w$ is a positive kernel. The resulting random graphs 
exhibit a hierarchical community structure in which the density of 
connections is governed by the ultrametric distance between vertices.

The Laplacian $L_d^k$ of the deterministic graph sampled from an 
ultrametric graphon is itself an ultrametric Laplacian, whose 
eigenvalues and spectral projectors admit completely explicit closed-form 
expressions in terms of the community sizes and inter-community connection 
densities. We show that the normalized eigenvalues and spectral projectors 
of the random Laplacian $L_r^k$ are arbitrarily close to those of $L_d^k$ 
with high probability as $k\to\infty$; the explicit formulas for $L_d^k$ 
therefore provide closed-form analytical approximations for the spectrum and spectral projectors
of $L_r^k$. We then develop the following applications. A sign structure theorem for the 
empirical spectral projectors provides a rigorous generalization of the 
Fiedler vector criterion to hierarchical networks with arbitrarily many 
communities. We further establish a detectability threshold in spectral 
community detection for one-level hierarchical graphons, governed by a 
threshold $p^*=\min_i\rho_i$, where $\rho_i$ is the normalized spectral 
gap of the $i$-th sub-community. For random walks, we construct a limiting pseudo-inverse Laplacian 
operator $L_W^+$ and establish its almost sure convergence from the 
pseudo-inverse of the random Laplacian $L_r^k$ in \(L^2\) norm. 
Since the hitting and commute times of the continuous-time Markov chain 
are expressed in terms of the pseudo-inverse Laplacian, this convergence 
implies that both collapse almost surely, in the large-graph limit, to 
quantities depending only on the expected degrees of the endpoints, 
losing all information about the hierarchical community structure. Finally, we apply the framework to the SIS epidemic model on hierarchical 
community networks, deriving explicit closed-form stability conditions for 
the disease-free equilibrium in terms of the cluster sizes and 
inter-community connectivities. These reveal a fundamental tension between 
homogeneous and heterogeneous community structures: global cooperation is 
optimal in the homogeneous case, whereas targeted intervention on the most 
connected sub-community is substantially more effective in the heterogeneous 
case, as confirmed by numerical experiments.}

\keywords{}

\maketitle

\tableofcontents

\section{Introduction}

In recent years, networks have become one of the most important mathematical abstractions across the sciences. Due to their generality as models of pairwise interactions among abstract entities, they appear ubiquitously across a wide range of applications \cite{networksnewman}. The study of the properties of networks relies on the body of graph theory, whose tools are used to understand and reveal the characteristics of systems modeled as networks. Community (also called cluster or module) structure  and hierarchical organization are among the most pervasive properties in real-world networks. From ecological networks such as food webs and occurrence networks, through the modular organization of the brain's functional connectome, to the nested hierarchical structure of social systems, the topology of real-world networks is rarely characterized by a single scale. Rather, it is typical to observe a hierarchical organization composed of nested communities realizing several levels of resolution \cite{GirvanNewman2002, NewmanGirvan2004, Fortunato2010, Ravasz2002, SalesPardo2007, Clauset2008, Schaub2023, Dreveton2025, Rezvani2022, Meunier2009, Meunier2010, Wierzbinski2023, Simon1962, BorzoneMas2023}.

The literature on hierarchical community structure in networks has largely focused on methods for detecting such communities, yielding insights into the functional organization of the underlying systems \cite{GirvanNewman2002, NewmanGirvan2004, Fortunato2010, Ravasz2002, SalesPardo2007, Clauset2008, Schaub2023, Dreveton2025, Rezvani2022, Porter2009}.In this context, a fundamental question is not only whether hierarchical community structure exists in a given network, but whether it can be reliably detected. Understanding the limits of detectability is of both theoretical and practical importance, as it determines the regime in which algorithms can be expected to recover the underlying community structure \cite{Schaub2023, Dreveton2025, PhaseTransitionsSpectralCD}.

Random walks on graphs are of great use in different areas of science. From measuring centrality in networks \cite{BrinPage1998, LangvilleMeyer2004, LambiottRosvall2012} to diffusive and spreading processes \cite{MasudaPorterLambiotte2017}, they are a fundamental tool for analysis. Their behaviour is deeply influenced by the underlying network structure, in particular, by the presence of community organization at multiple scales. 

On the other hand, a rigorous graphon-theoretic framework that captures multi-scale hierarchical structure and its spectral consequences for dynamical processes has not been fully developed.  In one direction, Markov processes on graphs have been used as a tool for community detection, yielding multiscale methods that go beyond classical modularity maximization or spectral clustering \cite{Lambiotte2015, Schaub2012, Patelli2020, LiZhang2013}. In the other direction, one can fix a modular structure and study the consequences for the dynamics. This approach has been explored in the context of epidemic spreading: for SIS models on two-scale community networks, it has been shown that community structure lowers the epidemic threshold and that communities act as reservoirs of infection \cite{Bonaccorsi2014, LiuHu2005, Nadini2018}, while for SIR models on hierarchical modular networks, non-universal power-law growth of prevalence has been reported \cite{Odor2021}. However, these works are largely restricted to one or two levels of community structure, and closed-form analytical expressions for epidemic thresholds in terms of the full multi-level hierarchical organization of the network are not available. A unified analytical framework that captures the consequences of arbitrarily nested community structure on Markovian dynamics, and that yields explicit analytical expressions, remains an open problem, which this work addresses.

An important feature of many real-world networks exhibiting hierarchical community structure is that they belong to the dense connectivity regime. For example, in structural brain connectomes it has been shown that removal of weak connections is inconsequential for graph-theoretical analysis, and that reducing connectivity density further introduces instability in graph theoretical analyses \cite{Civier2019}. This observation motivates the study of hierarchical community networks in the dense regime, which is precisely the setting of the present work.

This work proposes a mathematical framework for the study of hierarchical community networks in the dense regime, grounded in the theory of graphons. We introduce the notion of an \emph{ultrametric graphon}: by modeling hierarchical community structure via a graphon whose edge probabilities are governed by an ultrametric on \([0,1]\), the framework simultaneously captures the nested community organization and the dense connectivity regime described above. The central analytical advantage of this approach lies in the explicit solvability of the associated Laplacian: as established in \cite{MoranLedezma2026}, the spectral theory of ultrametric Laplacians on finite spaces admits completely explicit eigenvalues, spectral projectors, and heat kernel formulas. In the graphon limit, this solvability passes to the large-graph regime, yielding closed-form analytical expressions for spectral quantities, random walk functionals, and epidemic thresholds that are not accessible through existing frameworks.

\subsection{Graphons as limit of dense graphs}

Graphons are a promising nonparametric generative model for the study of large networks. The theory, initiated by Lovász and Szegedy \cite{LovaszSzegedy2006} and further developed in \cite{BorgsI2008, BorgsII2012}, provides a rigorous framework for the study of limits of dense graph sequences. A graphon \(
W:[0,1]^2\rightarrow[0,1]\) encodes the asymptotic edge probability between vertices as a function of their latent positions in \([0,1]\), and serves as a generative model for random networks. The monograph \cite{LovaszBook2012} provides a comprehensive treatment of the theory, while considerable attention has been devoted to graphon estimation \cite{GaoLuZhou2015, KloppTsybakov2017}. 

In parallel, graphon operators, integral operators associated with the kernel
\(W\), and their associated dynamics, such as random walks and reaction-diffusion equations, have been studied as infinite-dimensional analogues of their discrete counterparts \cite{PetitLambiotte2021, BramburgerHolzer2023}. Since the spectrum of Laplacian-type operators encodes both topological and dynamical properties of networks, considerable attention has been devoted to the spectral theory of graphon Laplacians \cite{vizuete2021laplacian}. The problem of modularity maximization has also been extended to the graphon setting, providing a graphon approach in the task of finding communities in dense networks \cite{KlimmJonesSchaub2022}.

\subsection{Ultrametric spaces and ultrametric Laplacians}
An ultrametric space $(X, d)$ is a metric space satisfying the 
\emph{strong triangle inequality}:
\begin{equation*}
    d(x, z) \leq \max\{d(x, y), d(y, z)\} \quad \text{for all } x, y, z \in X.
\end{equation*}
Equivalently, every point in a ball is its center, and the open balls 
form a nested, dendogram-compatible structure, an ultrametric tree 
\cite{MoranLedezma2026}. Beginning with the pioneering work of Rammal, 
Toulouse, and Virasoro \cite{rammal1986ultrametricity}, ultrametric structures have been recognized as the natural 
abstraction for systems with hierarchical organization. In recent work \cite{MoranLedezma2026}, the present author developed a unified spectral theory for finite ultrametric Laplacians in the context of phylogenetic trees, yielding closed-form expressions for eigenvalues, heat kernels, and spectral projectors. As noted in \cite{Dreveton2025}, hierarchical 
community detection consists in finding a tree of communities where deeper 
levels of the hierarchy reveal finer-grained structures. It is therefore 
natural to model a hierarchically nested random network via an underlying 
ultrametric tree (space), whose structure directly encodes the community 
hierarchy at every scale. Ultrametric analysis has found applications across diverse scientific domains, 
including evolutionary biology and phylogenetics \cite{MoranLedezma2026}, protein folding and glass relaxation, \cite{MoranLedezma2025}, graph theory and 
network analysis \cite{bradleymoranshape}, and the modeling of 
hierarchical neural dynamics \cite{ZZ2023}.
  \section{Main results and organization}
This paper develops a theory of ultrametric graphons as limiting objects
for random networks with nested hierarchical community structure. A
graphon $W:[0,1]^2\to[0,1]$ is called \emph{ultrametric} if
$W(x,y)=w(d(x,y))$, where $d$ is an ultrametric on $[0,1]$ induced by
a family of nested partitions. The resulting random graphs exhibit a
hierarchical community structure in which the density of connections
between vertices is governed by their ultrametric distance: for example, of \(w\) is non-increasing, vertices
within the same cluster at a fine scale are densely connected, while
vertices belonging to different coarse-scale clusters are sparsely
connected. The spectral theory of ultrametric Laplacians, developed in
Section~\ref{sec:spectraltheoryultra}, provides the analytical
foundation for all subsequent results.

The first set of results concerns the spectral theory of the Laplacian
$L_d^k$ associated with the deterministic graph sampled from an
ultrametric graphon. Since $L_d^k$ is an ultrametric Laplacian in the
sense of Section~\ref{sec:spectraltheoryultra}, its spectrum is
completely explicit: each internal node $I$ of the ultrametric tree modeling the community structure
contributes an eigenvalue $\lambda(I)$ with multiplicity $|C(I)|-1$,
and the associated spectral projector $E_I$ encodes the community
structure at that level of the hierarchy. We show that the normalized
eigenvalues and spectral projectors of the random Laplacian $L_r^k$
converge to those of $L_d^k$ with high probability as the number of
vertices grows, via an adaptation of the Davis--Kahan theorem and
Weyl's inequality to the ultrametric setting. This yields explicit closed-form expressions, in terms of the community 
sizes and inter-community connection densities, for the limiting 
eigenvalues and spectral projectors of the random Laplacian $L_r^k$.

A central result is the \emph{sign structure theorem for spectral
projectors}: for sufficiently large graphs sampled from an ultrametric
graphon, the sign pattern of the empirical spectral projector
$\hat{V}\hat{V}^\top$ associated with a cluster $I$ reveals the
community structure induced by the children of $I$ with high
probability, positive entries for pairs within the same child cluster,
negative entries for pairs in different child clusters. This
constitutes a rigorous generalization of the Fiedler vector criterion
to hierarchical networks with arbitrarily many communities and
non-trivial eigenvalue multiplicities.

We then establish a \emph{spectral threshold} for one-level hierarchical graphons. We show that the detectability of
the community structure via spectral methods is governed by a threshold
$p^* = \min_i \rho_i$, where each $\rho_i$ is the normalized spectral
gap of the $i$-th sub-community. Via Cheeger-type inequalities, $\rho_i$
admits a structural interpretation: it quantifies the effective
connectivity of the sub-network $G_i$, being sensitive both to the
presence of bottlenecks (small conductance) and to the absence of
highly connected vertices (bounded local degree). When the
inter-community edge density $w(h([0,1]))$ falls below $p^*$, spectral
clustering correctly identifies the community structure; when it exceeds
$p^*$, the Fiedler matrix loses all community information. This
generalizes the 
results presented in ~\cite{PhaseTransitionsSpectralCD}
from two balanced communities to an arbitrary number of communities of
heterogeneous sizes.

A second set of results concerns random walks on hierarchical community
networks. We construct a limiting pseudo-inverse Laplacian operator
$L_W^+$ and show that the pseudo-inverse of the random Laplacian
$L_r^k$ converges to $L_W^+$ in \(L^2\) norm almost surely.
As a consequence, the mean first passage times and the commute times of the continuous-time Markov chain
collapse in the large-graph limit to the sum of the inverses of the
expected degrees of the endpoints, losing all information about the
ultrametric community structure. This extends the commute-time collapse
phenomenon, previously established for discrete-time random walks by
von Luxburg et al.~\cite{vonluxburg2014hitting}, to hierarchical
community networks, with a proof that is entirely independent and based
on the spectral theory of the hierarchical pseudo-inverse Laplacian.

Finally, we apply the framework to the analysis of epidemic spreading
on hierarchical community networks via the SIS model. Explicit
closed-form conditions on the graphon structure are derived for the
stability and instability of the disease-free equilibrium, expressed
directly in terms of the sizes and inter-community connectivities of
the clusters at each level of the hierarchy. These analytical
expressions are not accessible through existing frameworks and reveal a
fundamental tension between homogeneous and heterogeneous community
structures: in homogeneous networks, global cooperation, reducing
connectivity uniformly across all communities, is the optimal
intervention strategy, whereas in heterogeneous networks, targeted
intervention on the most connected sub-community is substantially more
effective. These conclusions are supported by numerical experiments on
randomly generated ultrametric graphons with controlled levels of
community heterogeneity.

\section{Spectral Theory of Ultrametric Laplacians}\label{sec:spectraltheoryultra}

The theory of ultrametric Laplacians in the finite setting was fully developed in \cite{MoranLedezma2026}. In this section we review the most important results for this manuscript. 

An ultrametric space is a metric space \((X, d)\) in which the metric \(d\) satisfies the strong triangle inequality:
\[
d(x, z) \leq \max\{ d(x, y), d(y, z) \} \quad \text{for all } x, y, z \in X.
\]

An ultrametric space equipped with a measure \(m\) is called a \emph{measure} ultrametric space. In particular in the finite setting this is just a positive function on the leaves. A \emph{probability measure} is a measure satisfying \[\sum_{x\in X}m(x)=1.\]

Given a finite ultrametric space \((X, d,m)\) with topological tree \(T\) and measure \(m\), consider the operator
\[
L_X u(x) = \sum_{y\in X} w(d(x, y)) \big(u(y) - u(x)\big) \, m(y),
\]
where \(w : [0, \infty) \to \mathbb{R}\) is a given function. This operator acts on functions \(u : X \to \mathbb{R}\). We refer to $L_T$ as the \emph{ultrametric Laplacian operator} attached to the triple $(X,d,m)$.

\begin{definition}\label{def:ultrametriclaplacianmatrix}
The \emph{ultrametric Laplacian matrix} associated to the operator \(L_X\) is the \(|X| \times |X|\) matrix \(L= (L_{x,y})_{x, y \in X}\) whose entries are given by
\[
L_{x,y} =
\begin{cases}
w(d(x, y))\, m(y) & \text{if } y \neq x, \\
- \sum_{z \neq x} w(d(x, z))\, m(z) & \text{if } y = x.
\end{cases}
\]
\end{definition}
This matrix corresponds to the matrix representation of \(L_X\) with respect to the canonical basis \(\{e_y\}_{y \in X}\), where \(X\) is equipped with the canonical inner product and \(e_y(x) = \delta_{x,y}\) where \(\delta_{x,y}\) is the Kronecker delta, defined by
\[
\delta_{x,y} =
\begin{cases}
1 & \text{if } x = y, \\
0 & \text{otherwise}.
\end{cases}
\]

\subsection{The topological tree.}

Every finite ultrametric space \((X,d)\) admits a canonical tree representation. The closed balls of \((X,d)\) form a nested family under inclusion, and this nesting defines a rooted tree \(T\) refer as the \emph{topological tree} of \((X,d)\), whose leaves are the elements of \(X\) and whose internal nodes correspond bijectively to balls. Importantly, the tree \(T\) is not merely a combinatorial object: each internal node \(n\) harries a height \(h(n):=diam(B_n)\), making this tree an ultrametric tree in the sense of Section \(2\) of \cite{MoranLedezma2026}. The distance between two leaves is then recovered as \(d(x,y)=h(x\wedge y)\) where \(x\wedge y\) is the least common ancestor, \(LCA\) for short. Conversely, any ultrametric tree \(T\) induces an ultrametric on its leaves via this formula. This correspondence is a bijection \cite{MoranLedezma2026}. Henceforth, the terms \emph{internal node} and \emph{ball} are used interchangeably, and we write \(B_n\) for the ball associated to \(n\in T\). We denote by \(C(n)\) the set of children nodes of an internal node \(n\).

\begin{figure}[H]
    \centering
    \includegraphics[width=\textwidth]{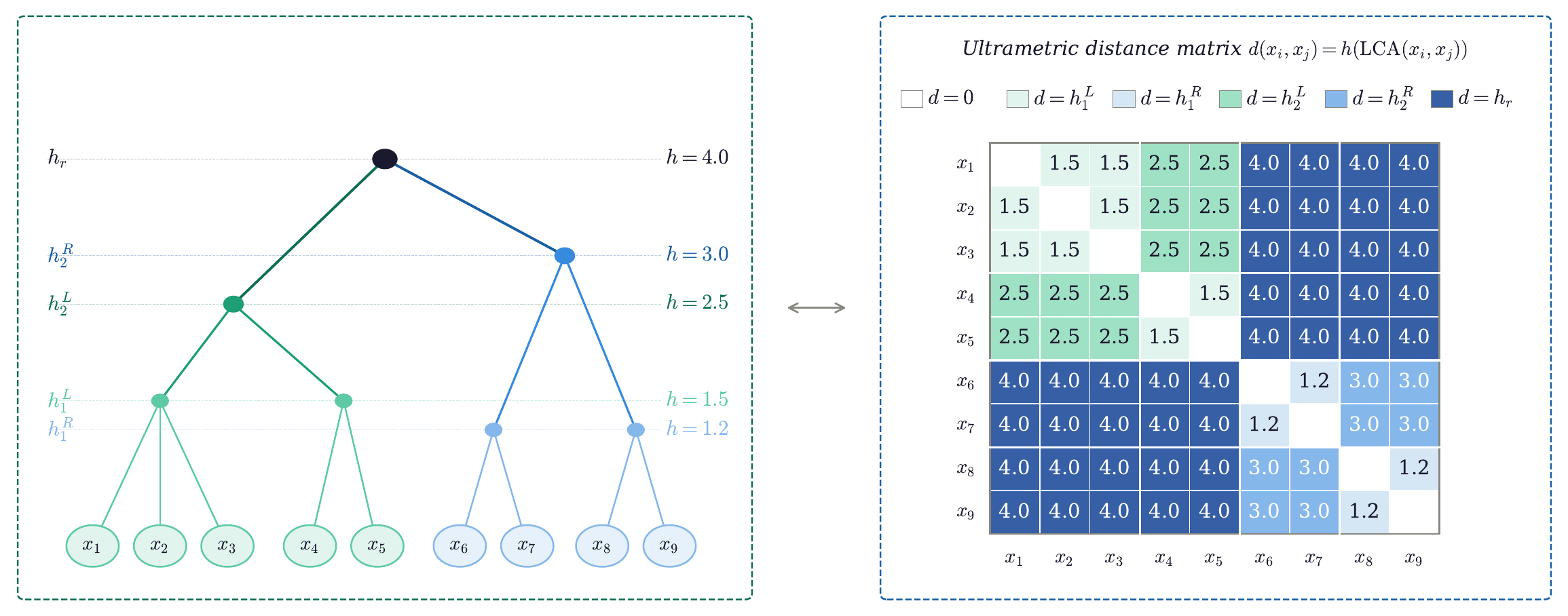}
    \caption{Every finite ultrametric space can be represented as a tree, and vice versa. This correspondence allows one to use the language of ultrametric spaces and the language of trees interchangeably: each ball is in bijection with an internal node, and the diameter of each ball is encoded in the height of the corresponding node.}
    \label{fig:ultraequivalence}
\end{figure}

\subsection{The spectrum and eigen-projectors.}

Let \(n\in T\) be an internal node, define the functions 
\begin{equation}
\label{eq:eigenvectorsultrametric}\varphi_{B_n,l}=\frac{\mathbf{1}_{B_l}}{m(B_l)}-\frac{\mathbf{1}_{B_{n}}}{m(B_{n})},
\quad\text{for each child }l\in C(n).
\end{equation}
Define
\[\mathcal{V}_n:=\left\{ \psi: \psi|_{B_l,} \, \text{is constant for all \(l\in C(n)\)},\, \sum_{l\in C(n)} m(B_l)\psi|_{B_l,}=0, \, Supp \, \psi \subset B_n\right\}.\]
It is easy to see that for a node \(n\) the set of functions \(\varphi_{B_n,l}\) span \(\mathcal{V}_n\).

The dimension of this space is \(|C(n)|-1
\). The following proposition characterize the eigenvalues of the ultrametric Laplacian in terms of the topology and measure on the ultrametric space \((X,d,m)\). 

\begin{prop} \label{prop:eigenvalueofultrametricoperator}
    Let \(\varphi_{B_n,l}\) defined as in equation \ref{eq:eigenvectorsultrametric}. Then \(\varphi_{B_n,l}\) is an eigenfunction of \(L_X\), with eigenvalue 
    \begin{equation} \label{eigenvaluereal}
        \begin{split}
            \lambda_n&=-\sum_{l\in \gamma_r(n)} m(B_l)\left[w(diam(B_l))-w(diam(B_{F(l)})\right]\\
            &=-\sum_{y\in X\setminus B_n} w(d(x_0,y))m(y) - w(diam(B_n))m(B_n),
        \end{split}
    \end{equation}
    where \(x_0\in B_n\), with multiplicity \(m_n:=|C(n)|-1\).
\end{prop}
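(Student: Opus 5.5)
We compute $L_X\varphi_{B_n,l}$ pointwise, splitting $x$ according to its position relative to $B_n$. The key structural fact is that for $x\in B_n$ and $y\notin B_n$ the distance $d(x,y)$ does not depend on the choice of $x$ in $B_n$; it equals $h(x\wedge y)$, and $x\wedge y$ is a strict ancestor of $n$. The same holds inside $B_n$: for $x\in B_l$ and $y\in B_n\setminus B_l$ we have $d(x,y)=\operatorname{diam}(B_n)$. Since $L_X$ is linear, it is convenient to first compute $L_X$ on indicator-type functions. Here $\gamma_r(n)$ is the path from $n$ to the root, and $F(l)$ is the parent of $l$.

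\textbf{Case $x\notin B_n$.} $\varphi:=\varphi_{B_n,l}$ vanishes at $x$, so
\[
L_X\varphi(x)=\sum_{y\in B_n}w(d(x,y))\varphi(y)m(y).
\]
The weight $w(d(x,y))$ is constant in $y\in B_n$, and $\sum_{y}\varphi(y)m(y)=1-1=0$, so $L_X\varphi(x)=0=\lambda_n\varphi(x)$.

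\textbf{Case $x\in B_n$.} Write $L_X u(x)=\sum_y w(d(x,y))u(y)m(y)-D(x)u(x)$ with $D(x)=\sum_{y\neq x}w(d(x,y))m(y)$. Only $y\in B_n$ contribute to the first sum.

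For $x\in B_n\setminus B_l$, the value $\varphi(x)=-1/m(B_n)$ is constant on the child ball containing $x$. A direct computation splits the first sum over $B_l$, over the child ball $B_{l'}\ni x$, and over the remaining children of $n$. Cross-child terms all carry weight $w(\operatorname{diam}B_n)$. Terms within $B_{l'}$ carry the weight $w(d(x,y))$, but $\varphi$ is constant there. After adding the within-$B_{l'}$ portion of $-D(x)\varphi(x)$, the terms with $y\in B_{l'}$ cancel. What remains is
\[
-\frac{1}{m(B_n)}\Big[\sum_{y\notin B_n}w(d(x,y))m(y)+w(\operatorname{diam}B_n)\,m(B_n)\Big]+w(\operatorname{diam}B_n)\cdot\frac{m(B_l)}{m(B_l)}\cdot(\dots).
\]
Concretely, the cross-child contributions combine to $w(\operatorname{diam}B_n)\big[m(B_l)/m(B_l)-m(B_n)/m(B_n)\big]$ plus the term $\varphi(x)\,w(\operatorname{diam}B_n)\,m(B_n)$. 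This yields exactly $\lambda_n\varphi(x)$ with the second expression for $\lambda_n$ in \eqref{eigenvaluereal}.

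For $x\in B_l$ the computation is analogous, with $\varphi(x)=1/m(B_l)-1/m(B_n)$. The within-$B_l$ terms cancel because $\varphi$ is constant on $B_l$. The remaining terms again give $\lambda_n\varphi(x)$: the external part $\sum_{y\notin B_n}$ multiplies $\varphi(x)$, and the part over $B_n\setminus B_l$ gives $w(\operatorname{diam}B_n)\,m(B_n)\varphi(x)$ after adding and subtracting.

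The main bookkeeping obstacle is doing this cleanly. The approach I would take is the identity
\[
L_X u(x)=\sum_y w(d(x,y))\big(u(y)-u(x)\big)m(y),
\]
applied directly: pairs with $u(y)=u(x)$ drop out. For $x\in B_l$, the surviving $y$ are those in $B_n\setminus B_l$, where $u(y)-u(x)=-1/m(B_l)$ with weight $w(\operatorname{diam}B_n)$, and those outside $B_n$, where $u(y)-u(x)=-\varphi(x)$. Summing and using $m(B_n\setminus B_l)/m(B_l)=m(B_n)\varphi(x)$ finishes the case. The case $x\in B_{l'}$ with $l'\neq l$ is the same with $B_l$ in place of $B_n\setminus B_{l}$. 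This form avoids the diagonal term entirely.

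\textbf{Equality of the two expressions for $\lambda_n$.} Group $y\notin B_n$ by the level at which they split from $x_0$ along $\gamma_r(n)$. The set $B_{F(l)}\setminus B_l$ has distance $\operatorname{diam}B_{F(l)}$ from $x_0$. Summation by parts (Abel summation) along the path, combined with the final term $w(\operatorname{diam}B_n)m(B_n)$, telescopes into $\sum_{l\in\gamma_r(n)} m(B_l)\big[w(\operatorname{diam}B_l)-w(\operatorname{diam}B_{F(l)})\big]$, with the convention $w(\operatorname{diam}B_{F(\text{root})})=0$.

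\textbf{Multiplicity.} The $\varphi_{B_n,l}$ span $\mathcal V_n$, which has dimension $|C(n)|-1$. The spaces $\mathcal V_n$ for distinct $n$, together with the constants, are mutually orthogonal in $L^2(m)$, and their dimensions sum to $|X|$. Hence they give the full eigendecomposition, and the multiplicity of the eigenvalue attached to $n$ is $|C(n)|-1$, with the understanding that coincident values $\lambda_n=\lambda_{n'}$ add multiplicities.
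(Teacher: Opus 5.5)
Your final argument is correct and is essentially the paper's proof. You evaluate $L_X\varphi_{B_n,l}$ in the form $\sum_y w(d(x,y))(u(y)-u(x))m(y)$ and split $y$ over $B_l$, $B_n\setminus B_l$ and $X\setminus B_n$. You close with $m(B_n\setminus B_l)/m(B_l)=m(B_n)\varphi(x)$, which is the paper's identity $(1-b/a)\,m(B_n\setminus B_l)=m(B_n)$, with its counterpart $(1-a/b)\,m(B_l)=m(B_n)$ for $x$ in another child. The case $x\notin B_n$ is handled by the same mean-zero argument.

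Beyond the paper, you also justify the equality of the two expressions for $\lambda_n$ by telescoping along $\gamma_r(n)$, correctly making explicit the convention $w(\operatorname{diam}B_{F(\mathrm{root})})=0$. You further justify the multiplicity via the orthogonal decomposition into constants and the spaces $\mathcal V_n$. Delete your first $D(x)$-based paragraph: it drops the diagonal term $w(0)u(x)m(x)$, ends in a garbled formula with a sign slip, and your direct computation supersedes it.
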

\begin{proof}
    Let \(n\in T\setminus X\) an internal node. Let \[
\varphi_{B_n,l}=\frac{\mathbf{1}_{B_l}}{m(B_l)}-\frac{\mathbf{1}_{B_{n}}}{m(B_{n})},
\quad\text{for a given child }l\in C(n).
\] define the real number
\(\lambda_n:=-\sum_{y\in X\setminus B_n} k(d(x_0,y))m(y) - k(diam(B_n))m(B_n).\) Define \(a=\frac{1}{m(B_l)}-\frac{1}{m(B_{n})}\) and \(b=-\frac{1}{m(B_{n})}\). Let \(x\in B_l\), therefore, \(\varphi_{B_n,l}(x)=a\), and 
\[\sum_{y\in B_l}k(d(x,y))(\varphi_{B_n,l}(y)-\varphi_{B_n,l}(x))m(y)=0.\] Therefore
\[L_X \,\varphi_{B_n,l}(x)=\sum_{y\in B_{n}\setminus B_l}k(d(x,y))(b-a)m(y)+\sum_{y\in X\setminus B_n}k(d(x,y))(0-a)m(y).\]
Since \(x\in B_l\), for any \(y\in B_n\setminus B_l\), \(k(d(x,y))=k(diam(B_n))\) and 
\begin{equation*}
    \begin{split}
        L_X \,\varphi_{B_n,l}(x)&=k(diam(B_n))(b-a)m(B_n\setminus B_l)-a\sum_{y\in X\setminus B_n}k(d(x,y))m(y)\\
        &=a\left[-k(diam(B_n))\left(1-\frac{b}{a}\right)m(B_n\setminus B_l)+\sum_{y\in X\setminus B_n}k(d(x,y))m(y).\right]
    \end{split}
\end{equation*}
In a similar way, for \(x\in B_n\setminus B_l\) where \(\varphi_{B_n,l}(x)=b\), the following holds
\[L_X \,\varphi_{B_n,l}(x)=b\left[-k(diam(B_n))\left(1-\frac{a}{b}\right)m( B_l)+\sum_{y\in X\setminus B_n}k(d(x,y))m(y).\right]\]
A direct computation leads to the following equalities
\[\left(1-\frac{b}{a}\right)m(B_n\setminus B_l)=m(B_n),\] and 
\[\left(1-\frac{a}{b}\right)m(B_l)=m(B_n).\]
Hence, in both cases 
\[L_X \, \varphi_{B_l,n}(x)=\varphi_{B_l,n}(x) \lambda_n.\]
Lastly, if \(x\in X\setminus B_n\) then 
\[L_X \, \varphi_{B_l,n}(x)=\sum_{y\in X}k(d(x,y)) \varphi_{B_l,n}(y)m(y)=0,\]
since \(\varphi_{B_l,n}\) has mean zero.
\end{proof}
Therefore there is a one by one correspondence between the eigenvalues with multiplicity and the balls of the ultrametric space (and therefore the internal nodes of the topological tree). Hence \(\lambda_n=\lambda(B_n)\). \newline

The eigen-projectors of the ultrametric Laplacian can be expressed in terms of the functions \ref{eq:eigenvectorsultrametric}.

\begin{prop} \label{prop:eigenprojectorsultra}
    The projection kernel associated with the eigenvalue \(\lambda(B_n)\) , denoted by \(E_{B_n}:X\times X\rightarrow\mathbb{R}\) satisfies \(E_{I_n}(x,y)=0\) if \(x\notin B_n\)  or \(y\notin B_n\)and, if \(x\in B_n\) and \(y\in B_n\) then
\begin{equation}\label{eq:projectorgraphon}
    E_{B_n}(x,y)=\frac{1_{B_j}(x)}{m(B_n)}-\frac{1_{B_n}(x)}{m(B_n)},
\end{equation}
where \(B_j\subset B_n\) is the only  child ball of \(B_n\) containing \(y\). 
\end{prop}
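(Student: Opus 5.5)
We read the statement as identifying $E_{B_n}$ with the orthogonal projection of $L^2(X,m)$ onto the eigenspace $\mathcal{V}_n$ of Proposition~\ref{prop:eigenvalueofultrametricoperator}. Its kernel is taken with respect to $m$, meaning $(E_{B_n}f)(x)=\sum_{y}E_{B_n}(x,y)f(y)m(y)$. The formula we aim to establish, for $x,y\in B_n$ with $B_j$ the child ball containing $y$, is
\[
E_{B_n}(x,y)=\frac{\mathbf{1}_{B_j}(x)}{m(B_j)}-\frac{1}{m(B_n)}.
\]
This is presumably what the display intends: the first denominator must be $m(B_j)$ for the kernel to be idempotent. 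We also check symmetry: $\mathbf{1}_{B_j}(x)$, with $B_j\ni y$, equals $1$ exactly when $x$ and $y$ share a child of $n$. The plan is first to show that the $\mathcal{V}_n$ give an orthogonal decomposition, and then to compute each projection explicitly.

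\textbf{Step 1: self-adjointness.} Since $w(d(x,y))$ is symmetric in $(x,y)$, $L_X$ is self-adjoint on $L^2(X,m)$ with inner product $\langle u,v\rangle=\sum u(x)v(x)m(x)$. Writing $\langle L_Xu,v\rangle$ as a double sum gives $-\tfrac12\sum_{x,y}w(d(x,y))(u(y)-u(x))(v(y)-v(x))m(x)m(y)$, which is symmetric in $u$ and $v$.

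\textbf{Step 2: orthogonal decomposition.} We show $L^2(X,m)=\mathrm{span}\{\mathbf 1\}\oplus\bigoplus_{n}\mathcal{V}_n$, an orthogonal sum over internal nodes. Take two distinct internal nodes $n,n'$.
\begin{itemize}
\item If $B_n\cap B_{n'}=\emptyset$, the supports are disjoint, so the spaces are orthogonal.
\item If $B_{n'}\subsetneq B_n$, then $B_{n'}$ lies inside a single child $B_l$ of $n$. Any $\psi\in\mathcal{V}_n$ is constant on $B_l$, and any $\psi'\in\mathcal{V}_{n'}$ has $m$-mean zero on $B_{n'}$, so $\langle\psi,\psi'\rangle=0$.
\end{itemize}
Each $\mathcal{V}_n$ is also orthogonal to the constants, by the zero-mean condition. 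For completeness we count dimensions: $1+\sum_n(|C(n)|-1)=|X|$, because in a rooted tree $\sum_n|C(n)|$ equals the number of non-root nodes. So the sum exhausts the space.

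\textbf{Step 3: identification of the spectral projector.} By Proposition~\ref{prop:eigenvalueofultrametricoperator}, $L_X$ acts on $\mathcal{V}_n$ as the scalar $\lambda(B_n)$. So the spectral projector of $L_X$ attached to the node $n$ is the orthogonal projection $P_n$ onto $\mathcal{V}_n$. If several nodes share the same numerical eigenvalue, the spectral projector of that eigenvalue is the sum of the corresponding $P_n$; the statement should be read node-wise in that case.

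\textbf{Step 4: explicit formula for $P_n$.} Define
\[
P_nf(x)=\mathbf{1}_{B_n}(x)\Big(\frac{1}{m(B_{j(x)})}\sum_{y\in B_{j(x)}}f(y)m(y)-\frac{1}{m(B_n)}\sum_{y\in B_n}f(y)m(y)\Big),
\]
where $B_{j(x)}$ is the child of $n$ containing $x$. Thus $P_n$ is the conditional expectation onto the $\sigma$-algebra generated by the children of $n$, minus the conditional expectation onto $B_n$, both restricted to $B_n$. We then verify three things.
\begin{itemize}
\item The range lies in $\mathcal{V}_n$: $P_nf$ is constant on each child and has zero $m$-mean on $B_n$.
\item $P_n$ is the identity on $\mathcal{V}_n$, since for $\psi\in\mathcal{V}_n$ both averages reproduce $\psi$ and the mean term vanishes.
\item $P_n$ is self-adjoint in $L^2(m)$, because its kernel with respect to $m$ is symmetric.
\end{itemize}
Reading off the kernel gives the displayed formula. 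Its vanishing when $x\notin B_n$ or $y\notin B_n$ is immediate from the indicator factors.

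The main obstacle is the bookkeeping in Step 3 rather than any hard estimate. One must fix the normalization of the kernel (with respect to $m$ rather than counting measure), which is where the displayed formula needs care. One must also handle possible coincidences $\lambda(B_n)=\lambda(B_{n'})$, so that "the projector associated with the eigenvalue" is interpreted node-wise.
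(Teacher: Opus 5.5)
Your proof is correct. Your correction of the display is also right: the paper's own proof ends with $E_n(x,z)=\frac{\mathbf{1}_{B_j}(x)}{m(B_j)}-\frac{\mathbf{1}_{B_n}(x)}{m(B_n)}$, so the $m(B_n)$ in the first term of the statement is a typo.

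The two arguments share the same core fact but are organized differently. The paper works column by column. It takes for granted that the eigenspace of $\lambda(B_n)$ is $\mathcal{V}_n$, and characterizes the kernel by $\sum_y E_n(x,y)f(y)m(y)-f(x)\perp\mathcal{V}_n$. It then tests this on $f=\delta_z$ and checks that $m(z)\varphi_{B_j}-\delta_z\perp\varphi_{B_l}$ for every child $l$; membership of $m(z)\varphi_{B_j}$ in $\mathcal{V}_n$ is left implicit. Together these identify the $z$-th column as $\varphi_{B_j}$.

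You instead write the whole operator as a difference of two conditional expectations. You then verify the operator-level characterization of an orthogonal projection: range in $\mathcal{V}_n$, identity on $\mathcal{V}_n$, and a kernel that is symmetric with respect to $m$.

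The paper's route is shorter. Yours makes the symmetry and zero-mean structure transparent. More importantly, your Steps~1--3 supply what the paper leaves unproved, namely that the $\lambda(B_n)$-eigenspace is exactly $\mathcal{V}_n$. You get this from the mutual orthogonality of the $\mathcal{V}_n$ and the dimension count $1+\sum_n(|C(n)|-1)=|X|$. You also give the node-wise reading when distinct nodes share an eigenvalue, which actually happens in the paper's own example (clusters $A$ and $B$).

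The one fact you use without stating is $\lambda(B_n)\neq 0$, which keeps the constants out of that eigenspace. It follows at once from positivity of $w$ and the second formula for $\lambda_n$ in Proposition~\ref{prop:eigenvalueofultrametricoperator}.
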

\begin{proof}
    Let \(E_n(x,y)\) the projection kernel to the eigen-space \(\mathcal{V}_n\) , that is, for each \(f\in L^2(m)\),
\[\sum_{y\in X} E_n(x,y)f(y)m(y)-f(x) \, \perp \, \mathcal{V}_n.\]
In particular for \(\delta_z\) where \(z\in B_j\subset B_n\) and \(j\in C(n)\), 
\[E(x,z)m(z)=\sum_{y\in X} E_n(x,y)\delta_z(y)m(y)=m(z)\left(\frac{\textbf{1}_{B_j}}{m(B_j)}(x)-\frac{\textbf{1}_{B_{n}}}{m(B_{n})}(x)\right)=m(z)\varphi_{B_j}(x),\]
where the last equality follows from the fact that \(m(z)\varphi_{B_j}-\delta_z \, \perp \, \varphi_{B_l}\), for all \(l\in C(n)\). Consequently 
\begin{equation*}\label{projectionkernel}
    E_n(x,z)=\varphi_{z\in B_j}(x).
\end{equation*}
\end{proof}
\begin{figure}[H]
    \centering
    \includegraphics[width=\textwidth]{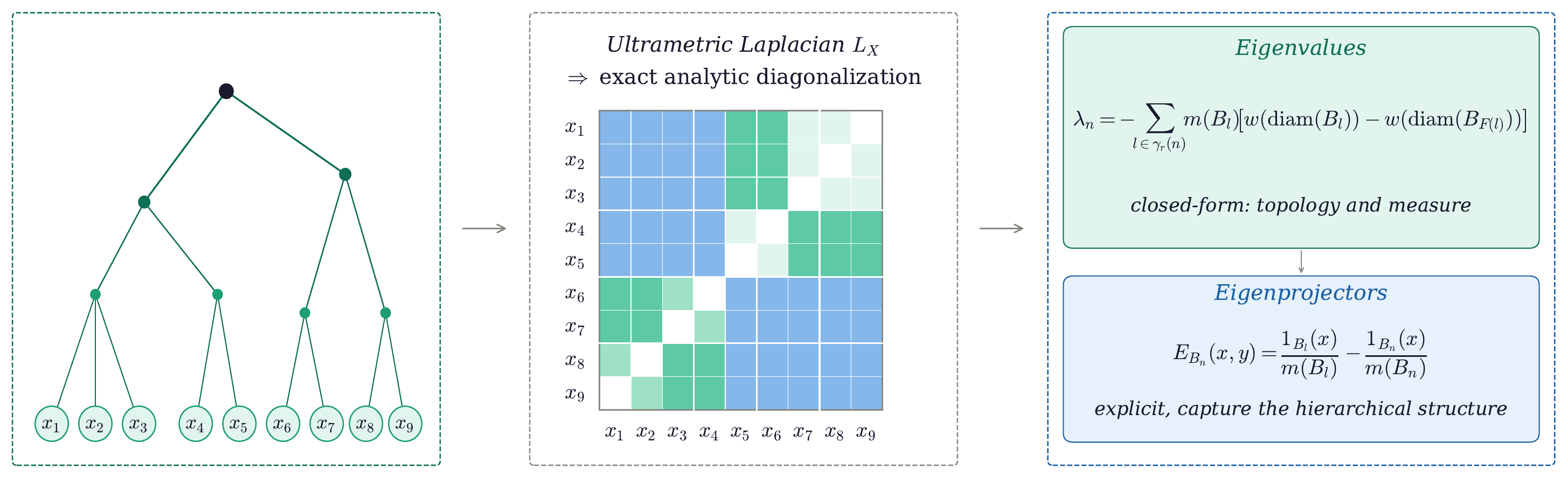}
    \caption{Any ultrametric space \((X,d)\) is equivalently described by an ultrametric tree (left). The distance function via a kernel \(w(d(x,y))\) and a measure \(m\) defines an ultrametric Laplacian \(L_X\) (center) that admits an exact spectral decomposition with explicit eigenvalues \(\lambda_n\) and eigenprojectors of the form \(\varphi_{B_n,l}\)  .}
    \label{fig:ultrametricpipeline}
\end{figure}
Ultrametric Laplacians enjoy a remarkable analytical advantage: their eigenvalues and eigen-projectors admit fully explicit closed-form expressions in terms of the topology and measure of the underlying space. This is summarized in Figure \ref{fig:ultrametricpipeline}. Any measure ultrametric space \((X,d,m)\) can be represented as an ultrametric tree given rise to an ultrametric Laplacian \(L_X\) via a kernel function \(k>0\). The eigenvalues \(\lambda_n=\lambda(B_n)\) and eigen-projectors \(E_n(x,y)\) are completely determined from the hierarchical structure of the space.

\section{Graphons a quick introduction.} \label{sec:graponintro}

In this work we adopt the simplest representation of \emph{graphon}, namely  a symmetric Lebesgue-measurable function  \(W:[0,1]^2 \rightarrow [0,1]\). An intuitive way to think of a graphon is as the limiting object of the heatmap of an adjacency matrix. This is often called the \emph{pixel picture} \cite{Glasscock2015}: as the number of vertices tends to infinity, the image is rescaled to always occupy the unit square. In this limit, the vertices become infinitesimal: each pair \(x,y\in [0,1]\) play the role of vertex indices while \(W(x,y)\) represent the associated weight.

\begin{figure}[H]
    \centering
    \includegraphics[width=\textwidth]{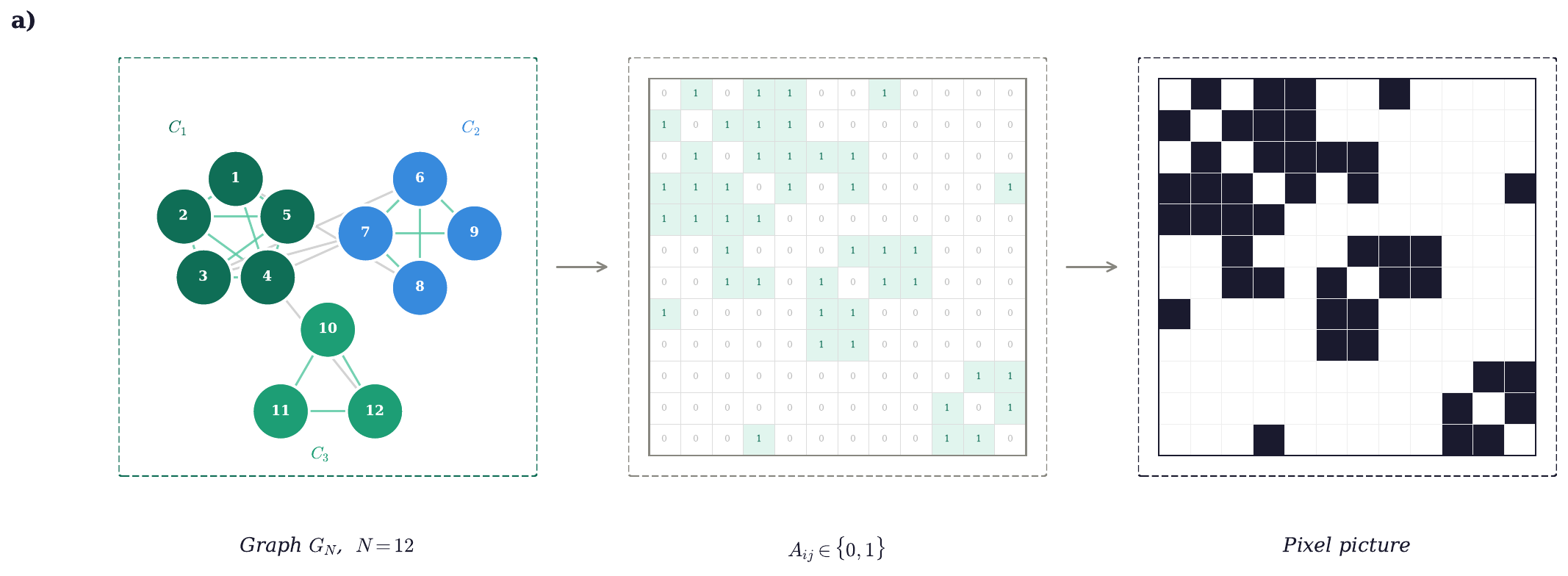}\\[0.8em]
    \includegraphics[width=\textwidth]{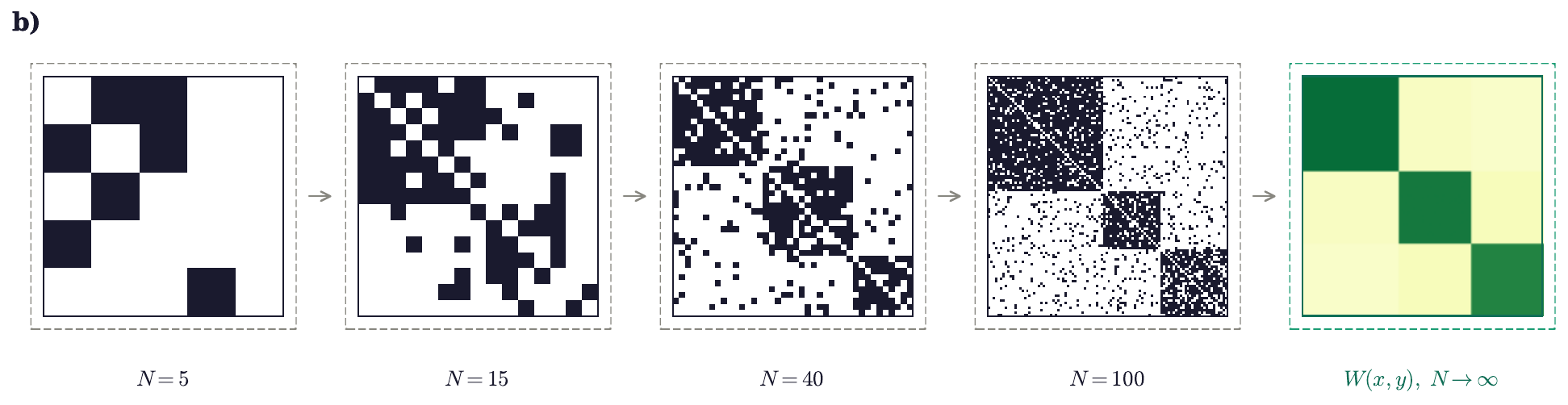}
    \caption{A graph can be represented by its adjacency matrix or equivalently as a pixel plot, as shown in panel \(a)\). A graphon arises as the limiting object of a sequence of pixel plots as the number of vertices tends to infinity; in this sense, a graphon is the limit of a sequence of dense graphs, as illustrated in panel \(b)\).}
    \label{fig:graphonpipeline}
\end{figure}

Since permuting node labels leaves the graph structure unchanged but modifies its adjacency matrix, a graphon is properly an equivalence class: \(W\) and \(W(\phi(x),\phi(y))\) represent the same graphon for any measure-preserving bijection \(\phi:[0,1]\rightarrow [0,1]\). 

A graphon \(W\) also serves as a random graph model. Given \(N\) points \(x_i\) selected uniformly at random from the interval \([0,1]\), one constructs a random graph on vertices \(\{x_i\}\) via the weights \(W(x_i,x_j)\) by constructing an adjacency matrix \(A=[\xi_{ij}]\), whose elements \(\xi_{ij}\in \{0,1\}\), with \(i>j\) are independent random variables with probability distribution 
\begin{equation} \label{eq:distributiongraphon}
    \mathbb{P}(\xi_{ij}=1)=W(x_i,x_j), \qquad \xi_{ij}=\xi_{ji},
\end{equation}
for \(i>j\) and \(\xi_{ii}=0\) for all \(1\leq i\leq N\). Alternatively, one may take a uniform partition \(I_i=[i/N,(i+1)/N)\) and set \(x_i=i/N\). 

Therefore, for any given number of vertices \(N>0\) a graphon \(W\) has attached a weighted deterministic graph with adjacency matrix denoted by \[A_d^N=[W(x_i,x_j)]_{i,j=1}^{N}\] and a simple random graph with adjacency matrix denoted by \[A_r^N = [\xi_{ij}]_{i,j=1}^N, \quad \xi_{ij} = \xi_{ji}, \quad \xi_{ii}=0, \quad \mathbb{P}(\xi_{ij}=1)=W(x_i,x_j) \ \text{for } i<j.\]

The following classic result holds \cite{LOVASZ2006933}.

\begin{thm}\label{teorem:convergencegraphons}
Let $W:[0,1]^2\to[0,1]$ be a graphon and let $\{x_i\}_{i=1}^{N}$ be a collection of points with $x_i$ selected from each interval $I_i=\left[\frac{i-1}{N},\frac{i}{N}\right)$ of the uniform partition of $[0,1]$. Define the step-function graphon $W_N:[0,1]^2\to\{0,1\}$ by
\[
W_N(x,y) = \xi_{ij}, \quad (x,y)\in I_i\times I_j,
\]
where $\xi_{ij}\in\{0,1\}$ are independent with $\mathbb{P}(\xi_{ij}=1)=W(x_i,x_j)$, and $\xi_{ij}=\xi_{ji}$, $\xi_{ii}=0$. Then
\[
\delta_\Box(W_N, W)\xrightarrow{N\to\infty} 0 \quad \text{a.s},
\]
where the cut metric is defined as
\[
\delta_\Box(U,W) = \inf_{\phi\in\mathcal{L}}\sup_{S,T\subseteq[0,1]}\left|\int_{S\times T}\bigl(U(\phi(x),\phi(y))-W(x,y)\bigr)\,dx\,dy\right|,
\]
and $\mathcal{L}$ denotes the space of Lebesgue-measurable bijections of $[0,1]$.
\end{thm}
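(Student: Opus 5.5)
The plan is to split $\delta_\Box(W_N,W)\le \delta_\Box(W_N,\overline{W}_N)+\delta_\Box(\overline{W}_N,W)$ and bound each term separately. Here $\overline{W}_N$ is the step function equal to $W(x_i,x_j)$ on $I_i\times I_j$ for $i\neq j$ and to $0$ on the diagonal blocks. The first term measures the random fluctuation of the sampled graph around its expectation. The second is deterministic and measures how well the sampled weighted graph approximates $W$. For both terms I will use the trivial bound $\delta_\Box(U,V)\le\|U-V\|_\Box$, obtained by taking $\phi=\mathrm{id}$. Since $N$ is growing, convergence will be established along the full sequence via Borel--Cantelli.

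\textbf{Random term.} For step functions on the partition $\{I_i\}$, the supremum over $S,T$ in the cut norm is attained, up to a factor of $4$, on unions of intervals $I_i$. This holds because the integrand is bilinear in the fractional membership vectors $(|S\cap I_i|N)_i$, so the extreme points are $0/1$ vectors. Hence
\[\|W_N-\overline W_N\|_\Box\le \frac{4}{N^2}\max_{S,T\subseteq[N]}\Bigl|\sum_{i\in S,j\in T}(\xi_{ij}-\mathbb E\xi_{ij})\Bigr|.\]
For fixed $S,T$ the sum involves at most $N^2$ bounded independent terms, counted up to symmetric pairing. Hoeffding's inequality therefore gives a tail of $2\exp(-c\,\varepsilon^2N^2)$ for deviation $\varepsilon N^2$. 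A union bound over the $4^N$ pairs $(S,T)$ gives a total probability of at most $4^N\cdot 2e^{-c\varepsilon^2N^2}$, which is summable in $N$. Borel--Cantelli then yields $\|W_N-\overline W_N\|_\Box\to0$ a.s. The diagonal blocks contribute only $O(1/N)$.

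\textbf{Deterministic term.} Here $\|\overline W_N-W\|_\Box\le\|\overline W_N-W\|_{L^1}$. The diagonal blocks contribute at most $1/N$. Off the diagonal I will compare $W$ with its block average $W_{\mathcal P_N}$ over the uniform partition; by the Lebesgue differentiation/martingale convergence theorem, $\|W_{\mathcal P_N}-W\|_{L^1}\to0$. Showing that $\sum_{i,j}N^{-2}|W(x_i,x_j)-W_{\mathcal P_N}|_{I_i\times I_j}|$ is small is the main obstacle: for an arbitrary measurable $W$, the point values at prescribed $x_i$ need not approximate the block averages. I would handle this in one of two ways. The first option is to assume the $x_i$ are chosen uniformly at random in $I_i$, as the statement's phrase ``selected from each interval'' suggests. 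Then $\mathbb E|W(x_i,x_j)-W_{\mathcal P_N}|$ is controlled by the $L^1$ oscillation of $W$ on blocks. Approximating $W$ in $L^1$ by a continuous function $U$ and using uniform continuity of $U$ makes this tend to $0$, and concentration (McDiarmid in the $x_i$, with bounded differences $O(1/N)$) upgrades it to almost sure convergence. The second option applies in the ultrametric setting used later, where $W$ is piecewise constant on finitely many blocks per level, or a.e. continuous. There the mismatch is supported on the $O(N)$ blocks meeting discontinuity boundaries, so the error is $O(1/N)$ per level. I would present the random-$x_i$ argument in general and note that it specializes to this second case.

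Combining the two terms gives $\delta_\Box(W_N,W)\to0$ almost surely, as in \cite{LOVASZ2006933}.
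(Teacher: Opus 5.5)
Your argument is correct under either of the two readings you isolate (each $x_i$ uniform in $I_i$, or $W$ piecewise constant or a.e.\ continuous), and it takes a genuinely different route from the one the paper relies on. The paper gives no proof of its own. It cites Lov\'asz--Szegedy, who use i.i.d.\ uniform latent points and prove $t(F,G(n,W))\to t(F,W)$ for every finite graph $F$ by martingale concentration. The cut-distance form then comes from the equivalence theorem of Borgs--Chayes--Lov\'asz--S\'os--Vesztergombi, or from their sampling lemma, which gives a bound of order $1/\sqrt{\log n}$ uniformly in $W$.

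You instead bound $\delta_\Box$ by the cut norm with $\phi=\mathrm{id}$ and split it into two terms. The Bernoulli fluctuation is handled by Hoeffding with the symmetric pairing treated correctly (coefficients $c_{ij}\in\{0,1,2\}$ on the independent pairs $i<j$), a union bound over $4^N$ cuts, and the first Borel--Cantelli lemma. The discretization error is controlled in $L^1$, where McDiarmid with bounded differences $2/N$ gives tails $2e^{-Nt^2/2}$. This is elementary and gives explicit rates: $O(N^{-1/2})$ for the fluctuation and $O(1/N)$ for piecewise-constant $W$. Your second option covers exactly the deterministic equidistant sampling of ultrametric graphons that the paper appeals to right after its sampling construction, which the cited theorem does not literally address. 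The same scheme also handles i.i.d.\ points if you take $\phi$ to be the block permutation sorting the $x_i$ and use Glivenko--Cantelli. In return, the cited route gives convergence of all subgraph densities and, via the sampling lemma, a rate independent of $W$.

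Your caveat is necessary, not optional: as literally stated (arbitrary measurable $W$, arbitrary deterministic $x_i\in I_i$) the theorem is false. Let $W(x,y)=0$ if $x\in\mathbb{Q}$ or $y\in\mathbb{Q}$ and $W(x,y)=1$ otherwise, so $W=1$ a.e. With $x_i=(i-1)/N$ every $\xi_{ij}$ vanishes, so $W_N\equiv 0$ and $\delta_\Box(W_N,W)=1$ for every $N$. This holds whatever class $\mathcal{L}$ is meant (the standard choice is measure-preserving bijections).

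A few small repairs:
\begin{itemize}
\item Your bilinearity argument already shows that the cut norm of a step function on $\{I_i\}$ is attained exactly on unions of the $I_i$, so the factor $4$ is superfluous.
\item The diagonal blocks contribute nothing to the random term, since both $\xi_{ii}$ and $\overline{W}_N$ vanish there.
\item The martingale convergence theorem needs nested (e.g.\ dyadic) partitions. For general $N$ use $\|W-W_{\mathcal{P}_N}\|_{L^1}\le 2\|W-U\|_{L^1}+\omega_U(\sqrt{2}/N)$, with $U$ continuous and $\omega_U$ its modulus of continuity.
\item For merely a.e.\ continuous $W$, the deterministic estimate comes from Darboux sums via $|W(x_i,x_j)-\bar{W}_{ij}|\le\mathrm{osc}_{I_i\times I_j}W$, with $\bar{W}_{ij}$ the block average. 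Counting blocks that meet discontinuities only works for finitely many breakpoints.
\item The second option is a separate deterministic estimate, not a specialization of the random-$x_i$ argument.
\end{itemize}
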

Therefore, every graphon is the limit of simple graphs (in bijection with step-function graphons). 

%Since we are interested in community detection methods and Markovian processes on networks, the natural question is how the Laplacian of the random graphs \(A_r^N\) behaves as \(N\rightarrow \infty\), and whether the spectral information of these Laplacians is related to the spectral information of \(W\). 

\section{Ultrametric Graphons and hierarchical community networks}
We now initiate the theory of ultrametric graphons in order to model hierarchical community networks, that is, networks where the density of connections is organized in clusters, where each cluster could in turn contain distinguishable sub-clusters, and so on. Such networks have been studied in \cite{girvan2002community}, for example. The nested community structure in network theory is an important phenomenon appearing in many real networks, and this work aims to provide an analytical framework to study such networks and the dynamical phenomena on them as the number of vertices grows. Although the community structures produced by our framework are related to those of the hierarchical stochastic block model, our approach is developed entirely within the language of ultrametric spaces and graphon theory, independently of the SBM formalism. Our main aim is to relate the theory of hierarchical Laplacians with graphon theory.

Several technical lemmas and parts of certain proofs rely on, or are adapted from, results in \cite{BramburgerHolzer2023}, which will be cited at the appropriate moments throughout the text. We also note that the relationship between kernels $w$ (and their associated non-local operators) and their discretizations (together with their corresponding Laplacian matrices) has already been studied from the perspective of $p$-adic analysis in \cite{Zuniga2022, ZunigaNetworks}, independently of graphon theory.

\subsection{Finite ultrametrics on the unit interval}

Let $[0,1]$ be equipped with a finite family of nested partitions $\{\Upsilon_\ell : M\ge \ell\ge 1\}$, where $\Upsilon_1=\{[0,1]\}$ and each $\Upsilon_\ell$ is a finite partition of $[0,1]$ into intervals of the following form:
\[
[0,1]
= [a^{(\ell)}_0,a^{(\ell)}_1)
  \,\sqcup\, [a^{(\ell)}_1,a^{(\ell)}_2)
  \,\sqcup\, \cdots \,\sqcup\,
    [a^{(\ell)}_{k_\ell-1},a^{(\ell)}_{k_\ell}],
\]
where \(a^{(\ell)}_{k_\ell}=1\). Moreover, each interval $I\in\Upsilon_\ell$ is the disjoint union of $n_{\ell,I}\ge2$ intervals from $\Upsilon_{\ell+1}$. Any interval in \(\Upsilon_{\ell}\) has assigned the index \(\ell\), which will be referred to as the \emph{level} of the interval, and when we are only interested in the level of a given interval we use the notation \(I_{\ell}\) for an arbitrary interval at level \(\ell\). For every $x\in[0,1]$, denote by $I_\ell(x)\in\Upsilon_\ell$ the unique interval of level $\ell$ containing $x$. For every interval \(I_\ell\) we assign a \emph{height} \(h(I_\ell)>0\) such that \(h(I_\ell)<h(I_r)\) if \(I_\ell\subset I_r\).

We define the level of the pair \((x,y)\in [0,1]^2\) by
\[
\ell(x,y):=\max\{M\ge \ell\ge0 : y\in I_\ell(x)\}
\]
for $x\neq y$. Define the \emph{least common ancestor} interval as the unique interval \(I_{\ell(x,y)}\) such that \(x,y\in I_{\ell(x,y)}\)  denoted \(x\wedge y\) or \(\mathrm{LCA}(x,y)\).

We define the function \(d:[0,1]\times [0,1] \rightarrow \mathbb{R}^+\),
\begin{equation} \label{eq:ultrametricgraphon}
    d(x,y):=
\begin{cases}
0, & x=y,\\
h(I_{\ell(x,y)}), & x\neq y.
\end{cases}
\end{equation}
This setup allows us to construct infinitely many finite ultrametric spaces in the following way. Take a finite sample of points \(X:=\{x_n\}\subset [0,1]\) where at least two points of each \(I_M\) are sampled, with \(I_M \in \Upsilon_M\) and \(\Upsilon_M\) the last partition; then \((X,d)\) is a finite ultrametric space. The balls of this ultrametric space are in one-to-one correspondence with the intervals of \(\{\Upsilon_\ell : M\ge \ell\ge 0\}\). In terms of the topological tree, the intervals of the family of partitions correspond to the internal nodes, whereas the sampled points \(\{x_n\}\) are the leaves; see Figure~\ref{fig:infinitesimaltree}.

All terminology introduced for finite ultrametric spaces carries over to this setting, with intervals playing the role of internal nodes and sampled points playing the role of leaves. In particular, given two intervals \(I_k\) and \(I_m\), their least common ancestor, denoted \(I_k\wedge I_m\) or \(\mathrm{LCA}(I_k,I_m)\), is the maximal level interval containing both of them. The notation \(I_{F(m)}\) represents the unique interval at level \(m-1\) containing \(I_m\), that is, the father interval. Finally, \(\gamma_r(I_n)\) is the unique sequence of intervals \(I_i\), \(i=1,2,\ldots,n\), satisfying \(I_n\subset I_{n-1}\subset \cdots \subset I_1=[0,1]\).

\begin{figure}[H]
    \centering
\includegraphics[width=\textwidth]{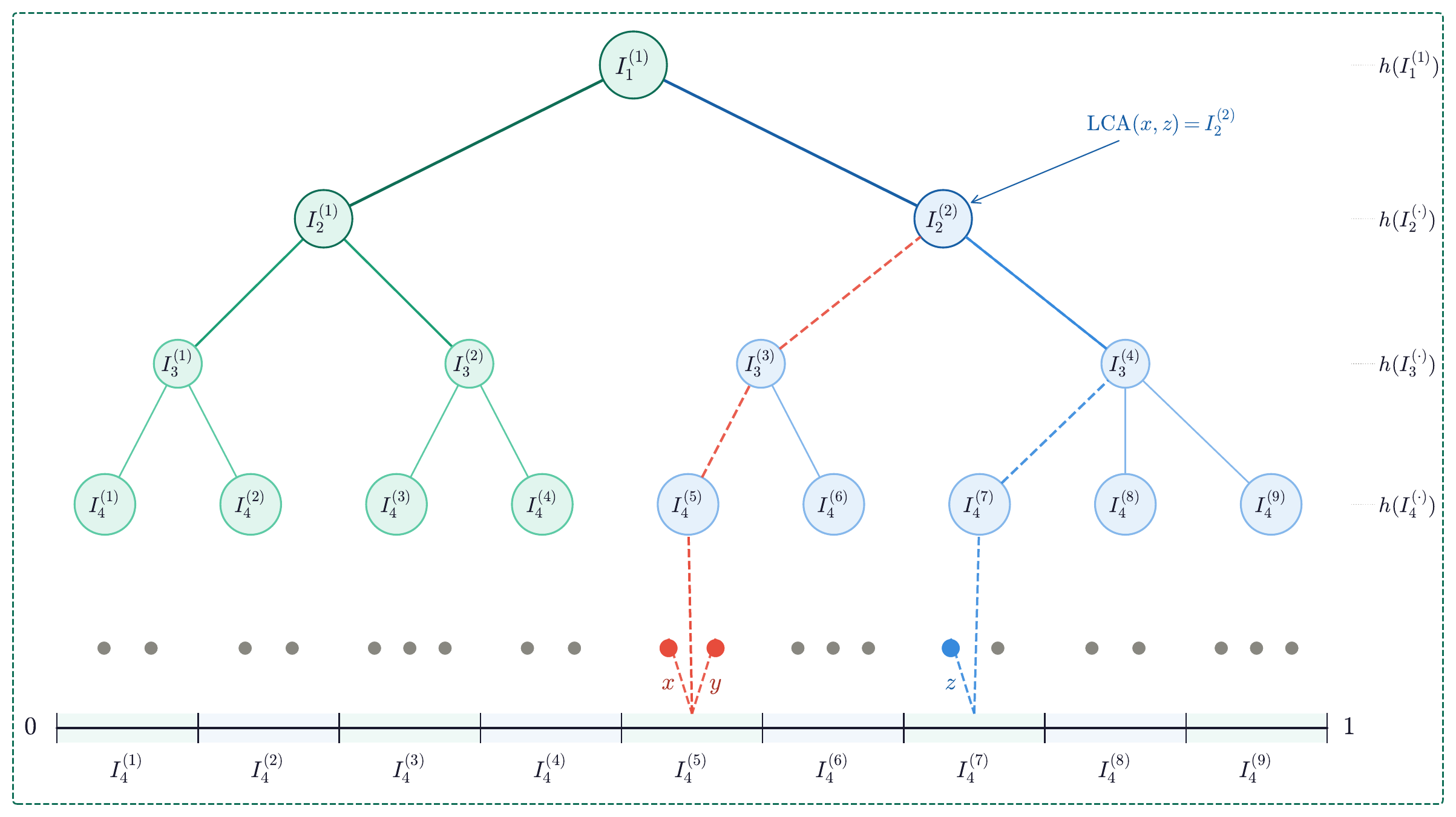}
    \caption{The intervals in the family of nested partitions $\{\Upsilon_\ell\}$ are identified with the internal nodes of a rooted tree. Points in $[0,1]$ are potential leaves; a finite ultrametric space $(X,d)$ is obtained by sampling at least two points per interval of the finest partition $\Upsilon_N$, at which point the sampled points become the leaves of the tree and the resulting balls are in bijection with the intervals of $\{\Upsilon_\ell\}$. Here, $x$ and $y$ belong to the same interval $I^{(5)}_4$, while $z$ belongs to $I^{(7)}_4$, so that $d(x,y) \leq d(x,z) = d(y,z)$, illustrating the ultrametric inequality. The heights $h(I^{(\cdot)}_\ell)$ are schematic; in general they vary across intervals of the same level.}
\label{fig:infinitesimaltree}
\end{figure}

\subsection{Ultrametric graphons and nested community networks.}
We begin with the main definition of this section.
\begin{definition}
    A graphon \(W:[0,1]^2\rightarrow [0,1]\) is called (finite-)ultrametric if
    \begin{equation*}
        W(x,y)=w(d(x,y)),
    \end{equation*}
    \noindent where \(d\) is the ultrametric attached to the family of partitions as in \eqref{eq:ultrametricgraphon} and \(w:[0,\infty)\rightarrow[0,1]\) is a positive function.
\end{definition}

We now describe which type of random networks comes from an ultrametric graphon. As mentioned before, we can construct a deterministic graph and a random graph by taking $N$ points uniformly distributed in $[0,1]$. Equivalently, given the ultrametric $d$ determined by the nested partitions $\{\Upsilon_\ell\}_{M \ge \ell\ge 1}$, we may sample points in a way adapted to this filtration and the Lebesgue measure denoted by \(\mu\): for the last level $\ell=M$, each interval $I_{M}^{(m)}\in\Upsilon_M$ is chosen to have a rational Lebesgue length $\mu(I_{M}^{(m)})$. We choose an integer \(N_k:=kN\), for \(k\geq 1\), as the number of sampled points, where $N$ equals the least common multiple of the denominators of all these rational lengths; we sample exactly $N_k\,\mu(I_{M}^{(m)})$ equidistant points from each interval $I_{M}^{(m)}$. This produces a deterministic set of sample locations $\{x_i\}$ whose distribution respects the relative sizes of the ultrametric intervals and is therefore compatible with the Lebesgue measure: since each interval $I_{M}^{(m)}$ receives exactly $N_k\,\mu(I_{M}^{(m)})$ sample points, the resulting empirical measure (normalized counting measure)
\[
\mu_{N_k} \;:=\; \frac{1}{N_k} \sum_{i=1}^{N_k} \delta_{x_i},
\]
assigns to intervals of the last level $I_{M}^{(k)}$ the mass
\[
\mu_{N_k}(I_{M}^{(k)}) 
    \;=\; \frac{N_k\,\mu(I_{M}^{(k)})}{N_k}
    \;=\; \mu(I_{M}^{(k)}).
\]
The sampling construction above is compatible with the standard graphon sampling. Indeed, by subdividing each interval \(I_M^{(k)}\) into \(N_k\,\mu(I_M^{(k)})\) equidistant sub-intervals \(I_M^{(k,i)}\) of equal Lebesgue measure
\[
\mu\bigl(I_M^{(k,i)}\bigr) = \frac{\mu(I_M^{(k)})}{N_k\,\mu(I_M^{(k)})} = \frac{1}{N_k},
\]
and selecting one point \(x_i\in I_M^{(k,i)}\), the resulting partition \(\{I_M^{(k,i)}\}\) is a uniform partition of \([0,1]\) into \(N_k\) intervals of equal length, which is precisely the standard graphon sampling construction of Section \ref{sec:graponintro}.

Moreover, $\mu_{N_k}$ agrees with the Lebesgue measure on all intervals of the partition $\Upsilon_\ell$: since any interval at level 
$\ell$ decomposes as a union of intervals at level $\ell+1$, and so on up to the last 
level, the measure of any interval satisfies
\[
\mu_{N_k}(I_\ell)=\sum_{I_M^{(k,i)}:\,x_i\in I_\ell} 
\mu\bigl(I_M^{(k,i)}\bigr)=\mu(I_\ell).
\]

In what follows, we adopt this construction, since it makes the analytical treatment more tractable; moreover, this implies that the graphs attached to this sampling method converge to the ultrametric graphon \(W\) in the sense of Theorem~\ref{teorem:convergencegraphons}. In particular, the size of each 
community in the resulting network is controlled by the Lebesgue measure 
$\mu(I_\ell)$ of the corresponding interval: larger intervals produce 
proportionally more vertices, so that $\mu$ encodes the relative sizes of the 
communities at every level of the hierarchy.\newline

For a given interval \(I_\ell\in \Upsilon_\ell\) define the set \(C(I_\ell):=\{I_{\ell+1}\in \Upsilon_{\ell+1}: I_\ell=\bigsqcup I_{\ell+1}\}\) as the set of children intervals of \(I_\ell\), following the underlying tree structure attached to the filtration. The number \(|C(I_\ell)|\) is referred to as the \emph{number of children} of the interval \(I_\ell\). For any \(x,y\in [0,1]\) in different intervals at the same level, \(x\in I_\ell^{(i)}\) and \(y\in I_\ell^{(j)}\), the probability of a possible edge is controlled by \(w(d(x,y))=w(h(I_{\ell(x,y)}))\); this parameter determines the density of connections between vertices belonging to different clusters. In this way, clusters become distinguishable through this density, and each level of the filtration generates a family of clusters whose inter-cluster connectivity is governed by the ultrametric distance on the unit interval.

Assume for example that \(w(h(I_\ell))\) increases as the heights decrease (i.e., as \(\ell\) increases): vertices in finer clusters are more densely connected among themselves. Conversely, the inter-cluster density between coarser clusters is lower, making them the most distinguishable ones in the hierarchy. The first level of clusters is determined by the number of children of \(I_1=[0,1]\), each of which decomposes further via \(|C(I_2^{(j)})|\), and so on. The communities of these networks are therefore \textbf{hierarchical and nested}.

\begin{example}
\label{example:graphonultra}
    We now fix a concrete ultrametric structure on the unit interval $[0,1]$ as a 
concrete illustrative example of a hierarchical community network. We start from 
a rooted tree with three children
\[
A,\; B,\; C,
\]
where the interval $A$ has two children $A_1,A_2$, the interval $B$ has two 
children $B_1,B_2$, and the interval $C$ has three children $C_1,C_2,C_3$. 
Up to this resolution, this gives the finite rooted tree
\[
\text{root} \longrightarrow \{A,B,C\}, \qquad
A\to\{A_1,A_2\},\; B\to\{B_1,B_2\},\; C\to\{C_1,C_2,C_3\}.
\]
Each of these seven intervals is further subdivided into two equal parts, 
producing the fourteen final-level intervals
\[
I_{A_1^a}, I_{A_1^b},\;
I_{A_2^a}, I_{A_2^b},\;
I_{B_1^a}, I_{B_1^b},\;
I_{B_2^a}, I_{B_2^b},\;
I_{C_1^a}, I_{C_1^b},\;
I_{C_2^a}, I_{C_2^b},\;
I_{C_3^a}, I_{C_3^b}.
\]
Thus, the unit interval admits the final decomposition
\[
[0,1)
= \bigsqcup_{X\in\{A_1,A_2,B_1,B_2,C_1,C_2,C_3\}}
   \bigl(I_{X^a}\sqcup I_{X^b}\bigr).
\]
The resulting tree structure is depicted in Figure \ref{fig:treegraphon}.

\begin{figure}[H]
\centering
\includegraphics[scale=0.3]{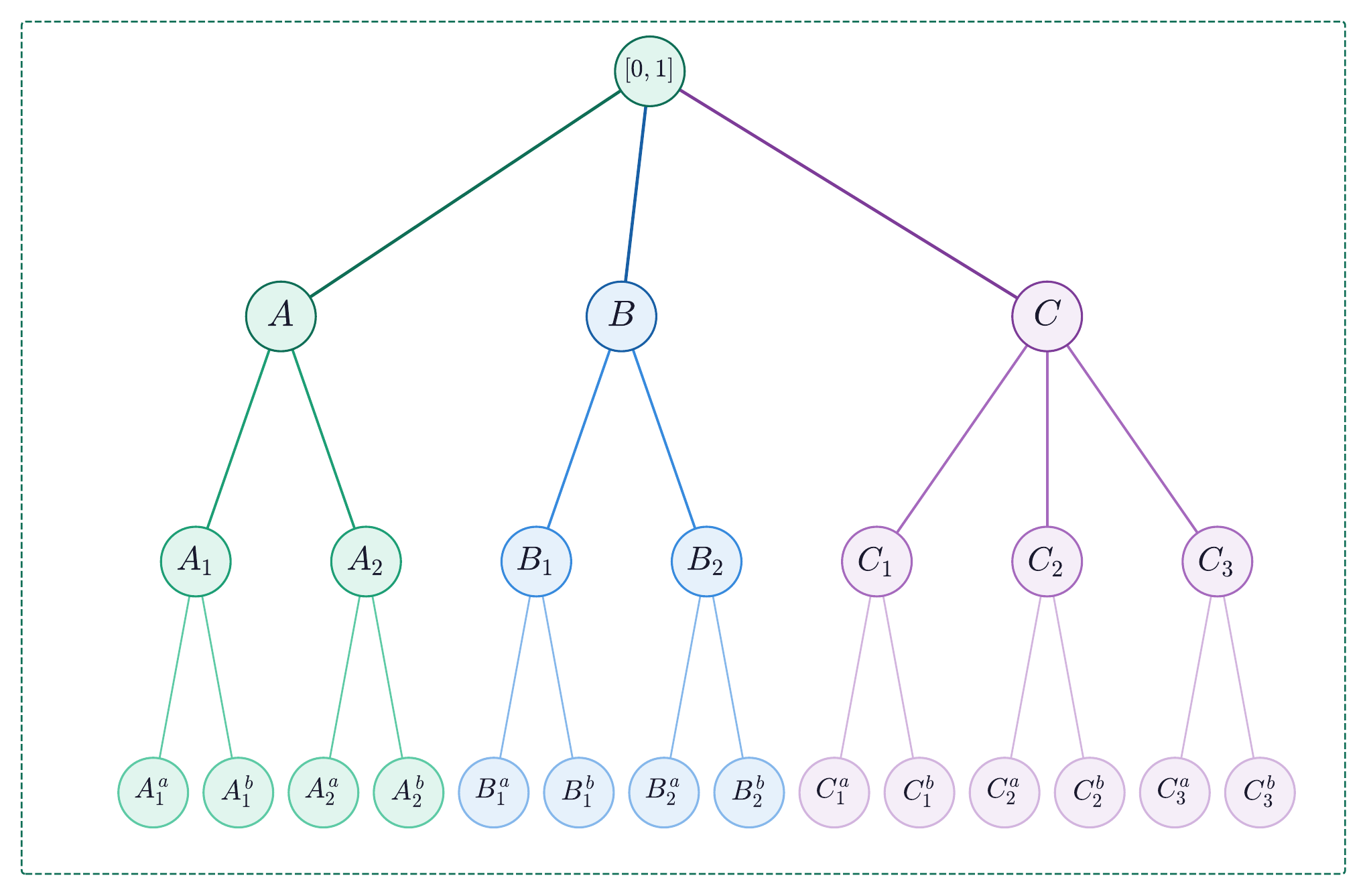}
\caption{Filtration with resolution up to level \(3\).}
\label{fig:treegraphon}
\end{figure}

We then define an ultrametric \(d:[0,1]\times[0,1]\to[0,\infty)\) that reflects this hierarchical structure, the numerical values are display in Figure \ref{fig:ultraexample}. 

\begin{figure}[H]
    \centering
    \includegraphics[scale=0.4]{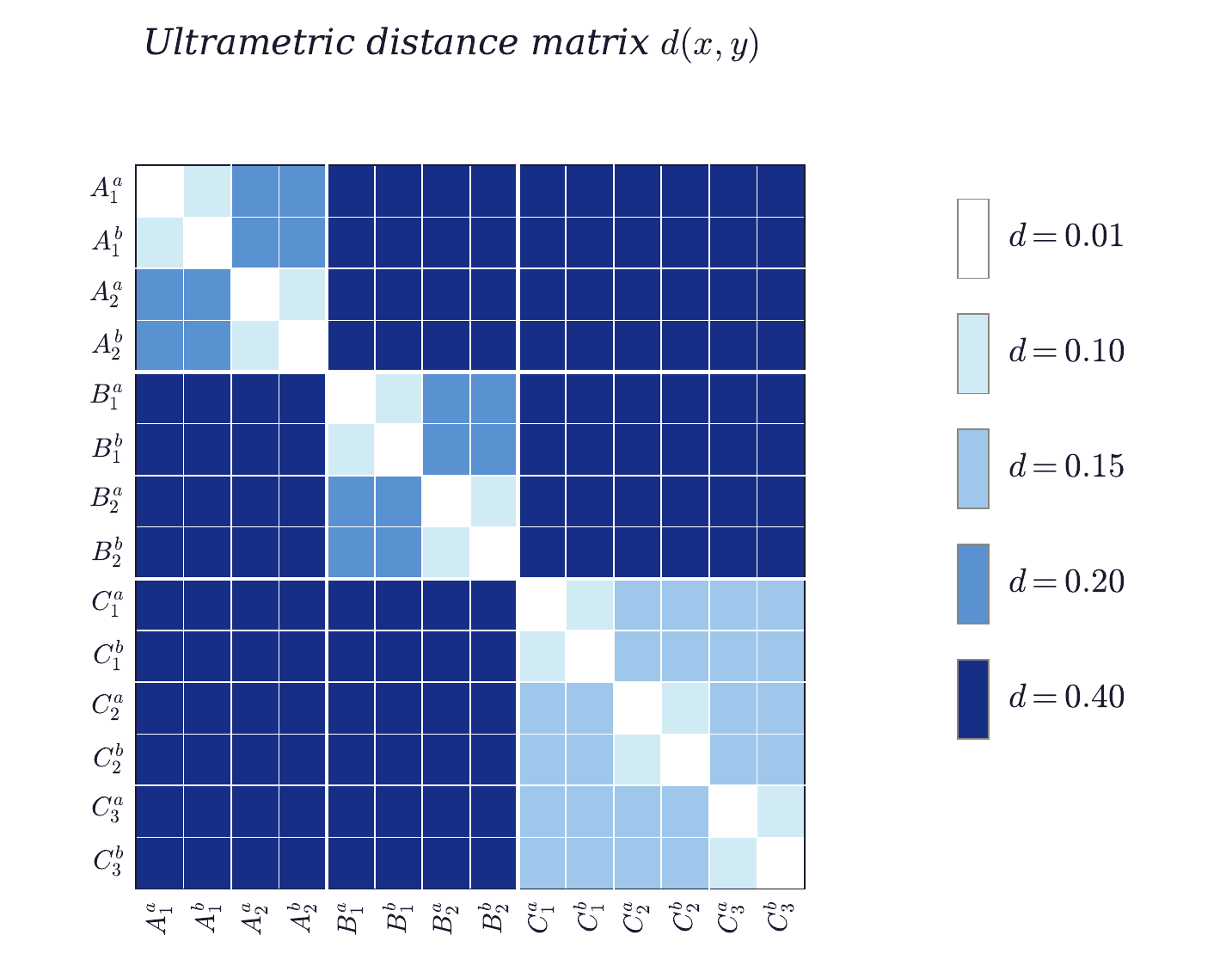}
    \caption{Heat map of distance function \(d\).}
    \label{fig:ultraexample}
\end{figure}

Given this ultrametric, we define an ultrametric graphon by prescribing a radial 
kernel
\[
w(d) = \exp\!\left(-\frac{d}{0.1}\right), \qquad d\geq0,
\]
and setting
\[
W(x,y) := w\bigl(d(x,y)\bigr)
        = \exp\!\left(-\frac{d(x,y)}{0.1}\right).
\]
Note that for points lying in the same interval at level $4$, one has 
$d(x,y)=0.01$, so the corresponding edge probability becomes
\[
W(x,y)=\exp\!\left(-\frac{0.01}{0.1}\right)\approx 0.9.
\]

\begin{figure}[H]
    \centering
    \includegraphics[width=0.9\linewidth]{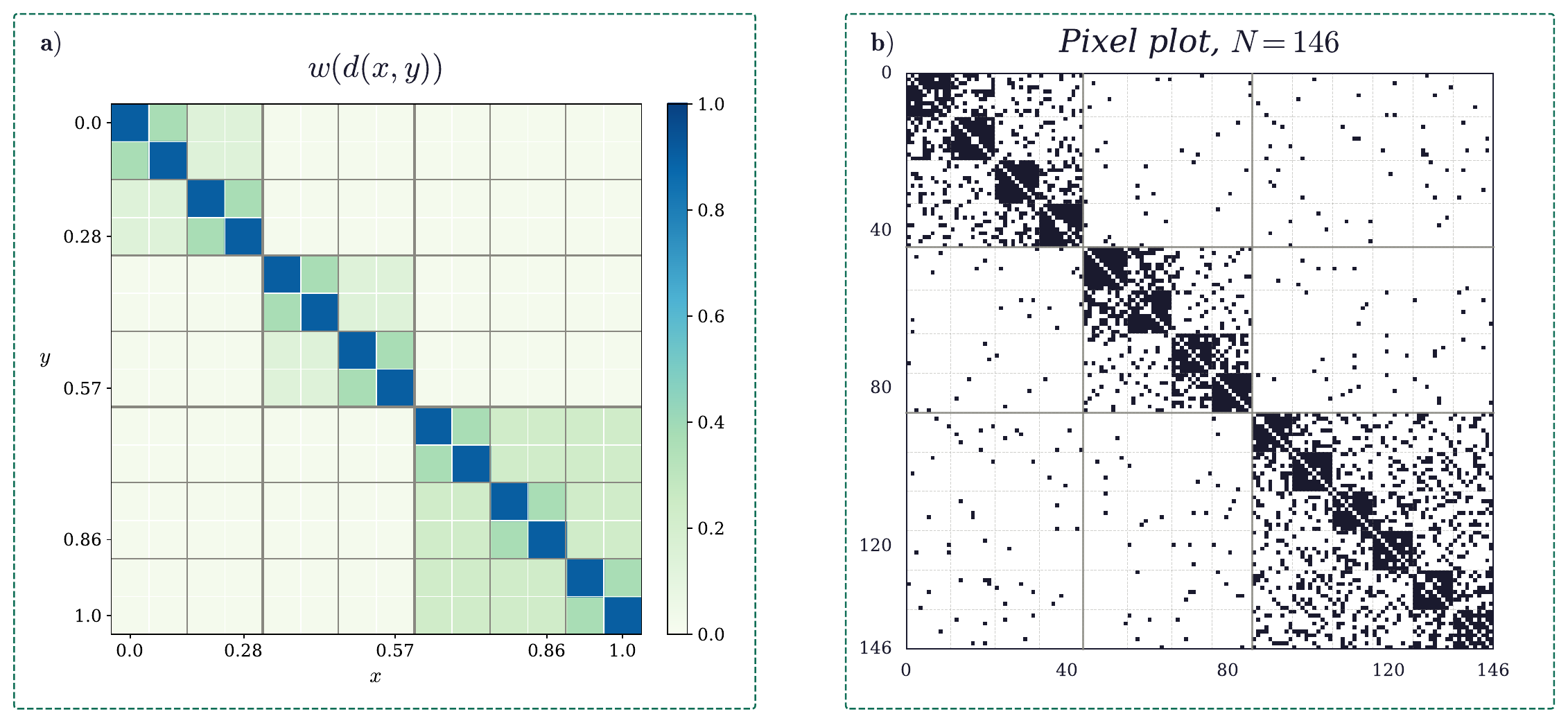}
    \caption{Ultrametric kernel and Adjacency graph. a) The heat map of the ultrametric exponential kernel and b) a random graph adjacency pixel plot of a realization with 146 vertices}
    \label{fig:randomgraphexample}
\end{figure}

Figure \ref{fig:randomgraphexample}, shows the corresponding heat map, the probability of connection between two pixels inside the yellow blocks is almost \(1\), the clusters generated by the partition are reflected in the pixel plot, here the density of the connections make all clusters up to resolution of level \(3\) distinguishable. 

The resulting random graphs exhibit a nested community structure, with strongly connected sub-communities inside each interval, intermediate connectivity within each branch \(A,B,C\), and very weak connectivity between different clusters. In Figure \ref{fig:combinatorialgraphcluster} below an schematic figure is presented. Formally, the result is a combinatorial graph, here we exaggerate the nodes depending in the level in order to distinguish the communities in the resulting network. 
\begin{figure}[H]
    \centering
    \includegraphics[width=0.8\linewidth]{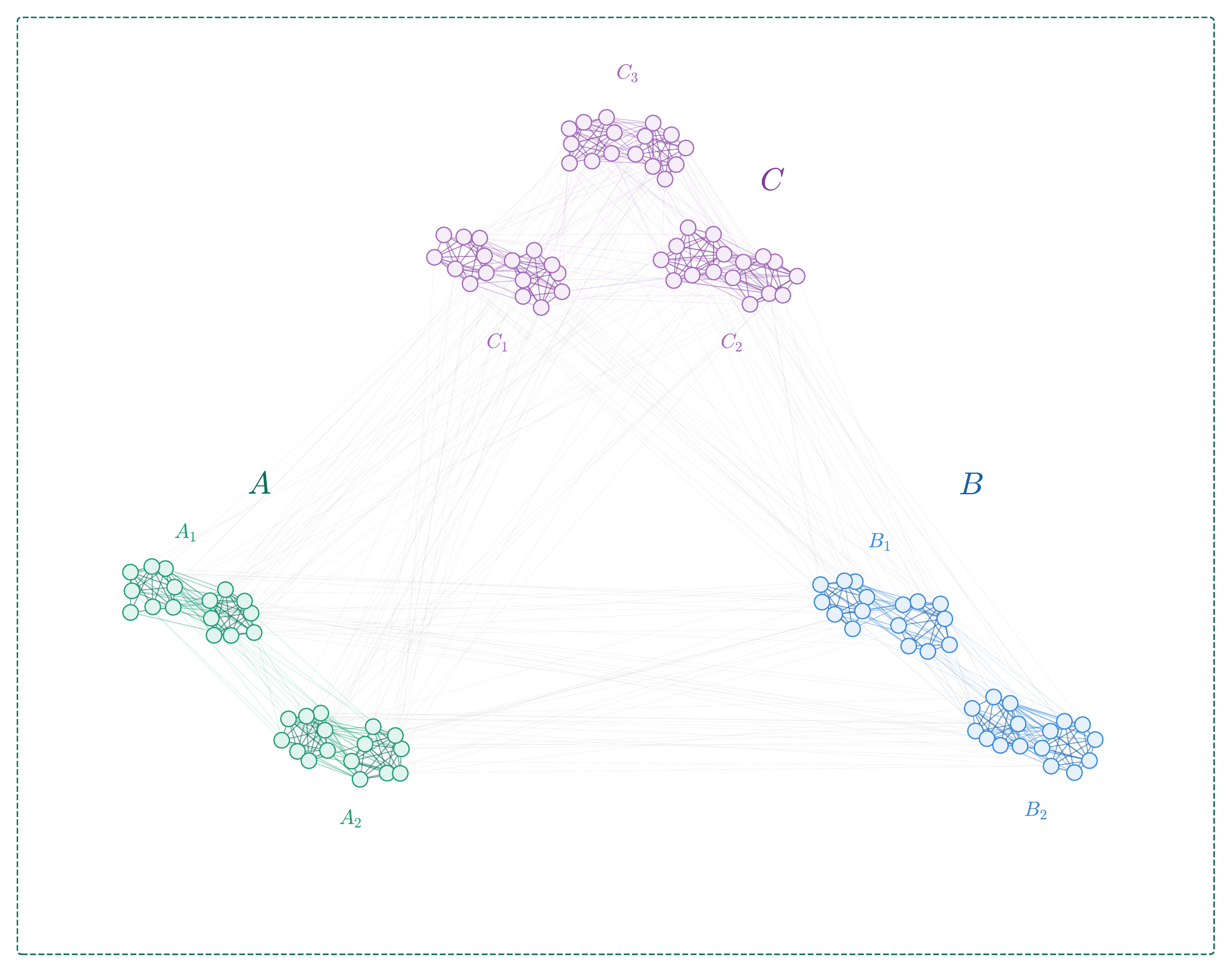}
    \caption{The resulting combinatorial graph attached to the adjacency pixel plot of Figure \ref{fig:randomgraphexample} b). The edges are prolonged according to the level of the filtration. }
    \label{fig:combinatorialgraphcluster}
\end{figure}

\end{example}

\section{Ultrametric graphons and graph Laplacians: The eigenvalue problem and the Fiedler matrix}

In this section we study the spectral properties of the Laplacians attached to 
the random graphs generated by the ultrametric graphons and how the topology of 
the underlying ultrametric tree relates to them in the limit. We also study the 
effectiveness of spectral community detection in terms of these results. First, 
we recall the sampling method presented in the previous section. Let $M$ be the 
last level of the partition. We take $N$ to be the LCM of the denominators of the lengths $\mu(I_{M}^{(k)})$, 
and sample $N_k\,\mu(I_{M}^{(k)})$ equi-spaced points $x_i\in I_{M}^{(k)}$, 
where $N_k=kN$, with $k\geq1$. Each point $x_i$ is sampled from an interval 
$I_{M}^{(k,i)}\subset I_{M}^{(k)}$ of Lebesgue measure $\mu\bigl(I_{M}^{(k,i)}\bigr)=\frac{1}{N_k}$, 
so that in total we have $N_k$ intervals $I_{M}^{(k,i)}$. 

As described before, the set 
of points $X_k:=\{x_i\}$ equipped with the function $d:[0,1]^2 \rightarrow 
\mathbb{R}^+$ induced by the partition is a finite ultrametric space. The matrices 
$A_d^{k}=[W(d(x_i,x_j))]_{i,j}$ and $A_r^{k}=[\xi_{ij}]_{i,j}$ have an 
attached Laplacian matrix
\[
L_d^{k}=A_d^{k}-D_d^{k},
\]
where $D_d^{k}$ is the degree matrix of $A_d^{k}$, that is, the diagonal matrix 
whose entries are the row sums of $A_d^{k}$. We define $L_r^{k}$ analogously. 
An important consequence of the fact that $A_d^{k}$ depends on a given 
ultrametric is the following.

\begin{lem}
The Laplacian matrix $L_d^{k}$ is a finite ultrametric Laplacian matrix.
\end{lem}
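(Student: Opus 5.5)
The plan is to exhibit a finite measured ultrametric space $(X_k,d,m)$ and a kernel for which Definition~\ref{def:ultrametriclaplacianmatrix} reproduces $L_d^k$ entry by entry.

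\textbf{The ultrametric space.} As noted in the construction of the sampling, $X_k=\{x_i\}$ with the restriction of $d$ from \eqref{eq:ultrametricgraphon} is a finite ultrametric space. The strong triangle inequality follows from the nested partition structure. If $d(x,y)=h(I_{\ell(x,y)})$ and $d(y,z)=h(I_{\ell(y,z)})$, then $x$, $y$ and $z$ all lie in the larger of the two intervals $I_{\ell(x,y)}$ and $I_{\ell(y,z)}$, since nested intervals containing $y$ are comparable. Hence $\ell(x,z)\ge\min\{\ell(x,y),\ell(y,z)\}$. Monotonicity of the heights then gives $d(x,z)\le\max\{d(x,y),d(y,z)\}$.

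\textbf{The measure and kernel.} I would take the counting measure $m\equiv 1$ on $X_k$, or equivalently $m=\mu_{N_k}$ with the factor $1/N_k$ absorbed into the kernel. The kernel is the given $w$.

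\textbf{Entrywise comparison.} For $i\neq j$, $(L_d^k)_{ij}=(A_d^k)_{ij}=w(d(x_i,x_j))=w(d(x_i,x_j))\,m(x_j)$, which is the off-diagonal entry in Definition~\ref{def:ultrametriclaplacianmatrix}. On the diagonal, $(L_d^k)_{ii}=(A_d^k)_{ii}-\sum_j (A_d^k)_{ij}$.

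\textbf{Main obstacle: the diagonal of $A_d^k$.} This is the only point needing care. The definition $A_d^k=[W(x_i,x_j)]$ puts $w(d(x_i,x_i))=w(0)$ on the diagonal. This term appears once in $A_d^k$ and once in the row sum, so it cancels:
\[
(L_d^k)_{ii}=w(0)-w(0)-\sum_{j\neq i}w(d(x_i,x_j))=-\sum_{j\neq i}w(d(x_i,x_j))\,m(x_j).
\]
This is the diagonal entry required by Definition~\ref{def:ultrametriclaplacianmatrix}. The conclusion therefore holds whether or not self-loops are included in $A_d^k$, and the value of $w$ at $0$ is irrelevant.

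\textbf{Tree structure.} Finally, I would check that the balls of $(X_k,d)$ correspond to the intervals of the partitions. This holds because at least two points are sampled in every finest interval, as $N_k\mu(I_M)\ge 2$ for $k$ large, or by the standing assumption. Consequently Propositions~\ref{prop:eigenvalueofultrametricoperator} and~\ref{prop:eigenprojectorsultra} apply to $L_d^k$, with $m(B_n)=N_k\,\mu(I_n)$.
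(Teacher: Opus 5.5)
Your proposal is correct and takes the same route as the paper: use the counting measure on $X_k$ and match the entries of $A_d^k-D_d^k$ with those in Definition~\ref{def:ultrametriclaplacianmatrix}. Your additional checks are handled correctly and fill in details the paper leaves implicit, namely the strong triangle inequality for $d$ restricted to $X_k$ and the cancellation of the diagonal term $w(0)$ of $A_d^k$ against the row sum.
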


\begin{proof}
Consider the finite ultrametric space $(X_k,d,m)$ where $m$ is the counting 
measure, so that $m(x)=1$ for all $x\in X_k$. By 
Definition~\ref{def:ultrametriclaplacianmatrix}, the ultrametric Laplacian 
matrix of $(X_k,d,m)$ has entries
\[
L_{x,y} = \begin{cases} w(d(x,y)) & y\neq x,\\ 
-\sum_{z\neq x} w(d(x,z)) & y=x, \end{cases}
\]
which coincides precisely with $L_d^k = A_d^k - D_d^k$, since 
$W(x_i,x_j)=w(d(x_i,x_j))$.
\end{proof}

The eigenvalues and eigenprojectors of $L_d^k$ are therefore completely 
described by the topology of the ultrametric space, the measure, and the kernel 
$w$, which represents the density of connections between clusters. The Laplacian 
matrix $L_d^{k}$ is the matrix representation of the Laplacian operator attached 
to the measured ultrametric space $(X_k,d,m)$ where $m$ is the counting measure. 
The structure of the internal nodes of the associated topological tree is 
independent of $k$; this parameter only increases the number of leaves attached 
to the internal nodes at the last level. \newline

Since \(\mu_{M_k}=\mu\) on each interval of any partition \(\Upsilon_{\ell}\) the spectrum also depends on the size of each cluster, that is, on 
the Lebesgue measure of each interval of the family of partitions. By 
Proposition~\ref{prop:eigenvalueofultrametricoperator}, the nonzero eigenvalues 
of $L_d^{k}$ are given by
\[
\lambda(I_n)=-\sum_{I_m\in \gamma_r(I_n)} m(I_m)
\Bigl(w\bigl(h(I_m)\bigr)-w\bigl(h(I_{F(m)})\bigr)\Bigr),
\]
and therefore
\[
\frac{\lambda(I_n)}{N_k}=-\sum_{I_m\in \gamma_r(I_n)} \mu(I_m)
\Bigl(w\bigl(h(I_m)\bigr)-w\bigl(h(I_{F(m)})\bigr)\Bigr).
\]
Each cluster $I_n$ has an associated eigenvalue; the behavior of 
$\frac{\lambda(I_n)}{N_k}$ depends on three main factors. First, the multiplicity 
of the eigenvalue is $|C(I_n)|-1$, where $|C(I_n)|$ is the number of children 
of $I_n$. Secondly, its value depends on the Lebesgue measure $\mu(I_m)$ of the 
intervals containing $I_n$, which represents the proportion of vertices belonging 
to each cluster. Thirdly, it depends on the difference between the inter-cluster 
edge densities at consecutive levels.

Since $L_d^{k}$ is an ultrametric Laplacian matrix, we can also give a closed 
expression for the spectral projectors. Let $I_n$ be an interval of the family 
of partitions. By \eqref{eq:projectorgraphon}, the projection kernel attached to 
$\lambda(I_n)$ satisfies $E_{I_n}(x,y)=0$ if $x\notin I_n$ or $y\notin I_n$, 
and if $x,y\in I_n$ then
\begin{equation}\label{eq:projectorgraphon}
    E_{I_n}(x,y)=\frac{\mathbf{1}_{I_j}(x)}{m(I_j)}-\frac{\mathbf{1}_{I_n}(x)}{m(I_n)},
\end{equation}
where $I_j\subset I_n$ is the unique child interval of $I_n$ containing $y$.

The next result shows that after normalization, the eigenvalues of $L_d^{k}$ 
and $L_r^{k}$ are arbitrarily close with high probability as the number of 
vertices $N_k$ increases. As a consequence, the explicit solvability of the 
ultrametric Laplacian eigenvalue problem provides an analytical tool to study 
how the topology of a random hierarchical community network influences its 
dynamical properties. This result follows the argument sketched in the proof 
of Theorem~2.8 of \cite{BramburgerHolzer2023}, and is an adaptation of 
Lemma~2.6 therein.

\begin{prop}\label{prop:eigenvalueaprox}
    Let $N_k\geq 2$ and consider the graph Laplacian $L_d^{k}$ with eigenvalues 
    $0=\lambda_1\geq \lambda_2 \geq \dots \geq \lambda_{N_k}$ and its attached 
    random Laplacian $L_r^{k}$ with eigenvalues $0=\hat{\lambda}_1\geq\hat{\lambda}_2
    \geq \dots \geq \hat{\lambda}_{N_k}$. For all $\gamma \in (0,\frac{1}{2})$, 
    there is a constant $C=C(\gamma)$, independent of $k$, such that 
    \[
    \left|\frac{\lambda_i}{N_k}-\frac{\hat{\lambda}_i}{N_k}\right|\leq N_k^{\gamma-\frac{1}{2}},
    \]
    with probability at least $1-2N_ke^{-CN_k^{2\gamma}}$.
\end{prop}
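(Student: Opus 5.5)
We plan to use Weyl's inequality: for symmetric matrices, $|\lambda_i(A)-\lambda_i(B)|\le \|A-B\|_2$, with eigenvalues ordered consistently on both sides. Both $L_d^k$ and $L_r^k$ are symmetric and have the ordering fixed in the statement, so it suffices to show that
\[
\|L_d^k-L_r^k\|_2\le N_k^{\gamma+\frac12}
\]
holds with probability at least $1-2N_ke^{-CN_k^{2\gamma}}$. We write $\Delta:=L_r^k-L_d^k=(A_r^k-A_d^k)-(D_r^k-D_d^k)$ and bound the two pieces separately, each by $\tfrac12 N_k^{\gamma+\frac12}$. The target constant $C$ may absorb these factors of $2$.

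\textbf{Degree part.} $D_r^k-D_d^k$ is diagonal, so its norm is $\max_i|\deg_r(x_i)-\deg_d(x_i)|$. For fixed $i$, the quantity
\[
\deg_r(x_i)-\deg_d(x_i)=\sum_{j\neq i}\bigl(\xi_{ij}-W(x_i,x_j)\bigr)
\]
is a sum of at most $N_k$ independent, centered random variables bounded in $[-1,1]$. The diagonal term $W(x_i,x_i)=w(0)$ contributes at most $1\ll N_k^{\gamma+1/2}$ and can be absorbed. Hoeffding's inequality then gives
\[
\mathbb P\bigl(|\deg_r(x_i)-\deg_d(x_i)|\ge t\bigr)\le 2e^{-2t^2/N_k}.
\]
We take $t=\tfrac14 N_k^{\gamma+1/2}$ and apply a union bound over the $N_k$ vertices. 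This yields failure probability at most $2N_ke^{-cN_k^{2\gamma}}$, which is exactly the shape of the claimed estimate.

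\textbf{Adjacency part.} $\|A_r^k-A_d^k\|_2$ is a random symmetric matrix with independent, centered, bounded entries above the diagonal. The standard route, and the one in Lemma~2.6 of \cite{BramburgerHolzer2023}, is a matrix concentration inequality. Matrix Bernstein or Hoeffding bounds this norm by $O(\sqrt{N_k\log N_k})$ with tail $N_k e^{-t^2/(cN_k)}$. Since $N_k^{\gamma}\gg\sqrt{\log N_k}$, taking $t=\tfrac14N_k^{\gamma+1/2}$ again gives a bound of the form $2N_ke^{-CN_k^{2\gamma}}$, after possibly shrinking $C$ to cover small $N_k$. In matrix Bernstein, the $N_k$ prefactor is the dimension factor.

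Alternatively, to stay closer to \cite{BramburgerHolzer2023}, one can bound $\|\cdot\|_2$ by the Schur test $\sqrt{\|\cdot\|_1\|\cdot\|_\infty}$, that is, by the maximal absolute row sum. This is crude: absolute row sums of centered Bernoulli variables are of order $N_k$, not $\sqrt{N_k}$. For this reason we would instead use matrix Bernstein, and this is the step I expect to be the main obstacle. One must check that the constants are uniform in $k$ and that the dimension prefactor matches $2N_k$. Both hold because the entries are bounded by $1$ and the variance proxy is at most $N_k$, independently of the ultrametric structure. Note that the ultrametric structure plays no role in this proposition; it only enters later through the explicit $\lambda_i$.

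Finally, combining the two bounds on the intersection of the good events and dividing Weyl's inequality by $N_k$ gives $|\lambda_i-\hat\lambda_i|/N_k\le N_k^{\gamma-1/2}$ for every $i$. Since the same $\Delta$ is used for all $i$, no union over $i$ is needed.
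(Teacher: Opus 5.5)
Your proposal is correct and follows the paper's route. Weyl's inequality reduces everything to the tail bound $\mathbb{P}\bigl(\|L_r^k-L_d^k\|\ge N_k^{1/2+\gamma}\bigr)\le 2N_ke^{-CN_k^{2\gamma}}$, and then you divide by $N_k$. The only difference is that the paper cites Lemma~2.6 of \cite{BramburgerHolzer2023} for this tail bound, whereas you derive it yourself (Hoeffding with a union bound for the degree part, matrix Bernstein for the adjacency part), with constants uniform in $k$.
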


\begin{proof}
    By Lemma~2.6 of \cite{BramburgerHolzer2023}, for all $\gamma \in (0,\frac{1}{2})$, 
    there exists a constant $C=C(\gamma)$, independent of $N_k$, such that
    \[
    \mathbb{P}\!\left(\|L_r^k-L_d^k\|\geq N_k^{\frac{1}{2}+\gamma}\right)
    \leq 2N_k e^{-CN_k^{2\gamma}}.
    \]
    For any pair $M_1$ and $M_2$ of $d\times d$ Hermitian matrices with eigenvalues 
    $\mu_1\geq \cdots \geq \mu_d$ and $\nu_1 \geq \cdots \geq \nu_d$ respectively, 
    Weyl's inequality gives
    \[
    |\mu_j-\nu_j|\leq \max\{|\rho_m|,|\rho_M|\}\leq\|M_1-M_2\|,
    \]
    where $\rho_m$ and $\rho_M$ are the minimum and maximum eigenvalues of $M_1-M_2$ 
    respectively, and $\|\cdot\|$ denotes the operator norm. Hence, for the eigenvalues 
    $\lambda_i$ and $\hat{\lambda}_i$ of $L_d^k$ and $L_r^k$ it holds that
    \[
    |\lambda_i-\hat{\lambda}_i|\leq \|L_r^{k}-L_d^{k}\|\leq N_k^{\frac{1}{2}+\gamma}.
    \]
    Dividing both sides by $N_k$ yields
    \[
    \left|\frac{\lambda_i}{N_k}-\frac{\hat{\lambda}_i}{N_k}\right|\leq N_k^{\gamma-\frac{1}{2}},
    \]
    with probability at least $1-2N_ke^{-CN_k^{2\gamma}}$, as desired.
\end{proof}

We show that the deterministic projection kernels of $L_d^k$ are good approximations 
of the projection kernels attached to $L_r^k$. The following proposition is a 
consequence of a variant of the Davis--Kahan theorem presented in 
\cite{daviskahntheorem}, and is an adaptation of point~$iii)$ in Theorem~2.8 
of \cite{BramburgerHolzer2023}.\begin{prop}\label{prop:eigenvectoraproximation}
    Let $L_d^k$ be the deterministic graph Laplacian matrix with indexed eigenvalues 
    $0=\lambda_1> \lambda_2\geq\cdots\geq \lambda_{N_k}$ and let $L_r^{k}$ be its 
    attached random Laplacian with eigenvalues $0=\hat{\lambda}_1\geq\hat{\lambda}_2
    \geq \dots \geq \hat{\lambda}_{N_k}$. Denote by $\lambda_{n_i}$ the eigenvalues 
    satisfying $\lambda_{n_i}=\lambda(I_n)$ for $i=1,\ldots,m$, where $m$ is the 
    multiplicity of $\lambda(I_n)$. If $V=[v_1,\ldots,v_m]\in \mathbb{R}^{N_k\times m}$ 
    and $\hat{V}=[\hat{v}_1,\ldots,\hat{v}_{m}]\in \mathbb{R}^{N_k\times m}$ have 
    orthonormal columns satisfying $L_d^{k}v_i=\lambda_{n_i}v_i$ and 
    $L_r^{k}\hat{v}_i=\hat{\lambda}_{n_i}\hat{v}_i$, then there exists $\delta>0$, 
    and for any $\gamma \in (0,\frac{1}{2})$ there is a constant $C=C(\gamma)$ such that
    \[
    \|\hat{V}\hat{V}^{T}-VV^{T}\|_F\leq \frac{2\sqrt{2}\,m}{\delta}N_k^{\gamma-\frac{1}{2}},
    \]
    with probability at least $1-2N_ke^{-CN_k^{2\gamma}}$.
\end{prop}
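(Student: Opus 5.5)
The plan is to combine the concentration bound already used in the proof of Proposition~\ref{prop:eigenvalueaprox} with the Yu--Wang--Samworth variant of the Davis--Kahan theorem \cite{daviskahntheorem}. That variant says the following. Let $\Sigma,\hat\Sigma$ be symmetric with eigenvalues ordered decreasingly, and let $V,\hat V$ collect the orthonormal eigenvectors for the indices $r,\dots,s$. Set $\delta=\min(\lambda_{r-1}-\lambda_r,\lambda_s-\lambda_{s+1})>0$, with the conventions $\lambda_0=+\infty$ and $\lambda_{N_k+1}=-\infty$. Then
\[
\|\hat V\hat V^{T}-VV^{T}\|_F\le \frac{2\min\bigl(\sqrt{m}\,\|\hat\Sigma-\Sigma\|,\|\hat\Sigma-\Sigma\|_F\bigr)}{\delta}.
\]
The key feature is that $\delta$ depends only on the \emph{deterministic} spectrum. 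We therefore apply this with $\Sigma=L_d^k/N_k$ and $\hat\Sigma=L_r^k/N_k$, and with $r,\dots,s$ the consecutive indices $n_1,\dots,n_m$ at which $\lambda(I_n)$ sits.

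\textbf{Step 1 (the gap $\delta$, uniform in $k$).} By the formula for $\lambda(I_n)/N_k$ derived after the Lemma on $L_d^k$, each normalized eigenvalue is
\[
-\sum_{I_m\in\gamma_r(I_n)}\mu(I_m)\bigl(w(h(I_m))-w(h(I_{F(m)}))\bigr),
\]
which is independent of $k$. The tree of internal nodes is also independent of $k$; only the number of leaves grows. Hence the finitely many distinct values $\lambda(I)/N_k$, together with $0$, form a fixed finite set. Define $\delta$ as the distance from $\lambda(I_n)/N_k$ to the nearest other distinct value in this set. One point needs care: if two clusters $I\neq I'$ happen to share the same eigenvalue, we either replace $m$ by the full multiplicity of that value or assume the values are distinct, so that the block $n_1,\dots,n_m$ is exactly the eigenspace. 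With that convention $\delta>0$ and does not depend on $k$.

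\textbf{Step 2 (perturbation size).} By Lemma~2.6 of \cite{BramburgerHolzer2023}, exactly as in Proposition~\ref{prop:eigenvalueaprox}, we have $\|L_r^k-L_d^k\|\le N_k^{1/2+\gamma}$ with probability at least $1-2N_ke^{-CN_k^{2\gamma}}$. Equivalently, $\|\hat\Sigma-\Sigma\|\le N_k^{\gamma-1/2}$ on the same event.

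\textbf{Step 3 (conclusion).} On this event, the Davis--Kahan bound gives
\[
\|\hat V\hat V^T-VV^T\|_F\le \frac{2\sqrt m}{\delta}N_k^{\gamma-1/2}.
\]
Since $\sqrt{2m}\le\sqrt2\,m$ for $m\ge1$, this is at most the stated $\frac{2\sqrt2\,m}{\delta}N_k^{\gamma-1/2}$. If one prefers the cruder route through the Frobenius norm and $\|VV^T-\hat V\hat V^T\|_F\le\sqrt{2}\,\|\sin\Theta\|_F$, the same constant appears.

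The statement does not itself fix the indexing of $\hat V$, so we must check that the $\hat v_i$ are the eigenvectors of $L_r^k$ at the same indices $n_1,\dots,n_m$. This is what the Yu--Wang--Samworth formulation requires, and it is consistent with how the statement is phrased. I expect the only genuine obstacle to be Step 1. One must guarantee that $\delta$ is a $k$-independent positive gap isolating $\lambda(I_n)$. This requires the explicit, $k$-free normalized spectrum of the ultrametric Laplacian, together with the handling of accidental coincidences between different clusters' eigenvalues just described.
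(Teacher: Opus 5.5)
Your proof is correct and uses the same two ingredients as the paper: the bound $\|L_r^k-L_d^k\|\le N_k^{1/2+\gamma}$ from Lemma~2.6 of \cite{BramburgerHolzer2023}, and the Yu--Wang--Samworth variant of Davis--Kahan. You use a different form of the latter, though, and that difference matters for the constant.

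\textbf{The two routes.} The paper uses the aligned-basis version (Theorem~2 of \cite{daviskahntheorem}, an orthogonal $\hat O$ with $\|\hat V\hat O-V\|_F$ small). It then passes to projectors by a triangle inequality with Frobenius sub-multiplicativity, which loses a factor $2\sqrt m$. You use the $\sin\Theta$ version together with the exact identity $\|\hat V\hat V^\top-VV^\top\|_F=\sqrt2\,\|\sin\Theta(\hat V,V)\|_F$, which loses nothing.

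\textbf{What your route buys.} You obtain $\frac{2\sqrt{2m}}{\delta}N_k^{\gamma-1/2}$, which is sharper than the stated bound by a factor $\sqrt m$. You also reach the stated constant without relying on the constant the paper quotes for Theorem~2. The paper writes $\sqrt{2m}$ there, whereas \cite{daviskahntheorem} has $2^{3/2}\sqrt m$; with the published constant, the paper's chain would give $4\sqrt2\,m/\delta$.

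\textbf{Your Step~1.} It makes explicit what the paper leaves implicit when it chooses $\delta$ with gap $\ge N_k\delta$. The distinct normalized values $\lambda(I)/N_k$, together with $0$, form a finite set independent of $k$, so $\delta$ can be taken $k$-independent. This is exactly what the later almost-sure convergence results require. Since the statement's $m$ is the full multiplicity of the value $\lambda(I_n)$, your treatment of coincident eigenvalues agrees with it.

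\textbf{One slip.} Theorem~1 of \cite{daviskahntheorem} bounds $\|\sin\Theta(\hat V,V)\|_F$, not the projector difference. The display in your Step~3 should therefore read $\frac{2\sqrt{2m}}{\delta}N_k^{\gamma-1/2}$ rather than $\frac{2\sqrt m}{\delta}N_k^{\gamma-1/2}$; your follow-up remark about $\sqrt{2m}$ suggests this is what you meant. The conclusion is unaffected, since $\sqrt{2m}\le\sqrt2\,m$.
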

\begin{proof}
    Let $\delta>0$ be a radius around $\lambda(I_n)$ such that
    \[
    \min\{\lambda_n-\lambda_n^{-},\lambda_n^{+}-\lambda_n\}\geq N_k\delta,
    \]
    where $\lambda_n^+$ is the smallest eigenvalue satisfying $\lambda_n^+>\lambda_n$ 
    and $\lambda_n^{-}$ is the largest eigenvalue satisfying $\lambda_n^{-}<\lambda_n$. 
    By the variant of the Davis--Kahan theorem presented in Theorem~2 of 
    \cite{daviskahntheorem}, there exists an orthogonal matrix $\hat{O}\in \mathbb{R}^{m\times m}$ 
    such that
    \[
    \|\hat{V}\hat{O}-V\|_F\leq \frac{\sqrt{2m}\|L_d^k-L_r^k\|}
    {\min\{\lambda_n-\lambda_n^{-},\lambda_n^{+}-\lambda_n\}}
    \leq \frac{\sqrt{2m}}{\delta}N_k^{\gamma-\frac{1}{2}},
    \]
    with probability at least $1-2N_ke^{-CN_k^{2\gamma}}$, where the last inequality 
    follows from
    \[
    \mathbb{P}\!\left(\|L_r^k-L_d^k\|\geq N_k^{\frac{1}{2}+\gamma}\right)
    \leq 2N_k e^{-CN_k^{2\gamma}}.
    \]
    Since the Frobenius norm is sub-multiplicative, invariant under orthogonal 
    transformations, and invariant under transposition, we have
    \begin{equation*}
        \begin{split}
            \|\hat{V}\hat{V}^{T}-VV^{T}\|_F
            =\|(\hat{V}\hat{O})(\hat{V}\hat{O})^{T}-VV^{T}\|_F
            &\leq \|\hat{V}\hat{O}\|_F\|(\hat{V}\hat{O})^T-V^T\|_F
            +\|V^{T}\|_F\|\hat{V}\hat{O}-V\|_F\\
            &= 2\sqrt{m}\|\hat{V}\hat{O}-V\|_F \\
            &\leq \frac{2\sqrt{2}\,m}{\delta}N_k^{\gamma-\frac{1}{2}},
        \end{split}
    \end{equation*}
    which completes the proof.
\end{proof}
We now specialize the preceding results to the ultrametric graphon setting. 
Substituting the explicit eigenvalue formula of 
Proposition~\ref{prop:eigenvalueofultrametricoperator} into 
Proposition~\ref{prop:eigenvalueaprox} yields the following.

\begin{cor}\label{cor:eigenvalueultrametricgraphon}
Let $N_k\geq 2$ and let $W$ be an ultrametric graphon. Under the notation of 
Proposition~\ref{prop:eigenvalueaprox}, for all $\gamma\in(0,\frac{1}{2})$ there 
is a constant $C=C(\gamma)$, independent of $k$, such that
\[
\left|\frac{|\hat{\lambda}_i|}{N_k}-\sum_{I_m\in\gamma_r(I_n)}\mu(I_m)
\Bigl(w\bigl(h(I_m)\bigr)-w\bigl(h(I_{F(m)})\bigr)\Bigr)\right|
\leq N_k^{\gamma-\frac{1}{2}},
\]
with probability at least $1-2N_ke^{-CN_k^{2\gamma}}$, where the sum runs over 
the unique ancestral chain $\gamma_r(I_n)$ of the interval $I_n$ associated 
with $\hat{\lambda}_i$.
\end{cor}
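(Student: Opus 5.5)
The plan is to combine Proposition~\ref{prop:eigenvalueaprox} with the explicit spectrum of $L_d^k$. The only real work is bookkeeping of signs and normalization.

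\textbf{Step 1: identify the deterministic eigenvalues.} By the lemma preceding Proposition~\ref{prop:eigenvalueaprox}, $L_d^k$ is the ultrametric Laplacian matrix of $(X_k,d,m)$, with $m$ the counting measure. Proposition~\ref{prop:eigenvalueofultrametricoperator} then lists its spectrum. Each interval $I_n$ of the family of partitions contributes $\lambda(I_n)$ with multiplicity $|C(I_n)|-1$. At the last level, the internal nodes $I_M$ have the sampled points as children, so this still holds. The eigenvalue $0$ comes with the constants.

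A dimension count confirms this is the full spectrum. Summing $|C(n)|-1$ over the internal nodes of a rooted tree gives (number of leaves)$-1=N_k-1$. Adding the constant eigenvector gives exactly $N_k$ eigenvalues.

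Next I need $\lambda(I_n)/N_k$. For every interval $I_m$ we have $m(I_m)=N_k\mu_{N_k}(I_m)=N_k\mu(I_m)$, by the sampling construction. Hence
\[
\frac{\lambda(I_n)}{N_k}=-\sum_{I_m\in\gamma_r(I_n)}\mu(I_m)\bigl(w(h(I_m))-w(h(I_{F(m)}))\bigr).
\]
Here the convention at the root is that $w(h(I_{F(1)}))$ is read as $0$, matching the second expression in \eqref{eigenvaluereal}.

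\textbf{Step 2: signs.} Since $L_d^k=A_d^k-D_d^k$ with nonnegative weights, $L_d^k$ is negative semidefinite, and so is $L_r^k$. Hence $\lambda_i=-|\lambda_i|$ and $\hat\lambda_i=-|\hat\lambda_i|$. The sum on the right of the corollary therefore equals $|\lambda(I_n)|/N_k$. Indeed, the second form in \eqref{eigenvaluereal} is minus a sum of nonnegative terms, so it is nonpositive, and both forms agree.

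\textbf{Step 3: transfer to the random Laplacian.} Index the decreasingly ordered eigenvalues $\lambda_i$ of $L_d^k$. Each index $i\ge 2$ equals $\lambda(I_n)$ for some interval $I_n$; this is the interval ``associated with $\hat\lambda_i$'' in the statement. Proposition~\ref{prop:eigenvalueaprox} gives
\[
\bigl|\lambda_i/N_k-\hat\lambda_i/N_k\bigr|\le N_k^{\gamma-1/2}
\]
simultaneously for all $i$, with probability at least $1-2N_ke^{-CN_k^{2\gamma}}$. Both eigenvalues are nonpositive, so $\bigl||\hat\lambda_i|-|\lambda_i|\bigr|=|\hat\lambda_i-\lambda_i|$. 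Substituting Step 1 yields the claimed inequality with the same $C(\gamma)$, independent of $k$.

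\textbf{Main obstacle.} The only delicate point is the consistency of the indexing. Several intervals may share the same eigenvalue value, and the ordering of $\lambda(I_n)$ does not depend on $k$ because the normalized values are $k$-independent. So the assignment $i\mapsto I_n$ is well defined up to ties, and ties do not affect the bound.

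I also need the measure identity $m(I_m)=N_k\mu(I_m)$ at every level, not just the last. This follows from the additivity argument already given in the sampling section.
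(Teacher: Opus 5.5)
Your proposal is correct and takes the same route as the paper, which obtains the corollary by substituting the explicit eigenvalue formula of Proposition~\ref{prop:eigenvalueofultrametricoperator} (with $m(I_m)=N_k\mu(I_m)$) into the Weyl-type bound of Proposition~\ref{prop:eigenvalueaprox}. You also make explicit what the paper leaves implicit: the sign bookkeeping via negative semidefiniteness and the root convention $w(h(I_{F(1)}))=0$; note only that your dimension count for completeness of the spectrum additionally relies on the spaces $\mathcal{V}_n$ (and the constants) being mutually orthogonal, hence independent.
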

As a first consequence, with high probability, if the multiplicity of the 
deterministic Laplacian for this eigenvalue is $m-1>0$, then we expect to 
have $m-1$ normalized eigenvalues $\frac{\hat{\lambda}_2}{N_k},\ldots,
\frac{\hat{\lambda}_m}{N_k}$ close to the value $-w(h([0,1]))$. If 
$\lambda_2-\lambda_3>0$ is large enough, we can distinguish the first $m$ 
eigenvalues as a well-separated cluster, in which case the number of 
communities at the first level is $m+1$. Since
\[
\lambda_2 - \lambda_3 = m(I_j)\bigl(w(h(I_j)) - w(h([0,1]))\bigr),
\]
where $I_j$ is a child interval of $[0,1]$, the separability of the 
first $m$ eigenvalues depends on two factors: the size of the sub-clusters, 
measured by $m(I_j)$, and the change in connection density between consecutive 
levels, measured by $w(h(I_j)) - w(h([0,1]))$.

It is known that spectral gaps govern the accuracy of spectral community detection 
\cite{communitydetection}; this result extends that principle to hierarchical 
nested communities. More generally, if $I_n$ is a cluster with child interval 
$I_i \subset I_n$, then
\[
\lambda(I_n) - \lambda(I_i) = m(I_i)\bigl(w(h(I_i)) - w(h(I_n))\bigr).
\]
There is therefore a detectability limit at every level of the hierarchy, 
controlled by the size and density of the clusters: once $w(h(I_i)) \to 
w(h(I_n))$, the communities at that level become indistinguishable through the spectrum of the network. In Figure \ref{fig:spectral_convergence} we present the behavior of the spectral convergence for the ultrametric graphon of Example \ref{example:graphonultra}. A few comments are in order regarding this panel. Note that the separation of the eigenvalues sharpens as $N_k$ increases. It is also worth noting the inherent limitation of the information that the spectrum alone can reveal about the community structure. Since clusters $A$ and $B$ are equivalent, their associated sub-trees are isomorphic, or, equivalently, the community densities coincide, their deterministic eigenvalues necessarily agree, and both fall within the same band $\lambda_i / N_k$. The detection of nested communities is therefore constrained by the density structure of the network. On the other hand, notice that the deepest communities are cliques, giving rise to sub-graphs isomorphic to the classical Erdős–Rényi model and producing its characteristic signature in the spectrum.

\begin{figure}[H]
    \centering
    % Fila 1: Pixel plots
    \includegraphics[width=\textwidth]{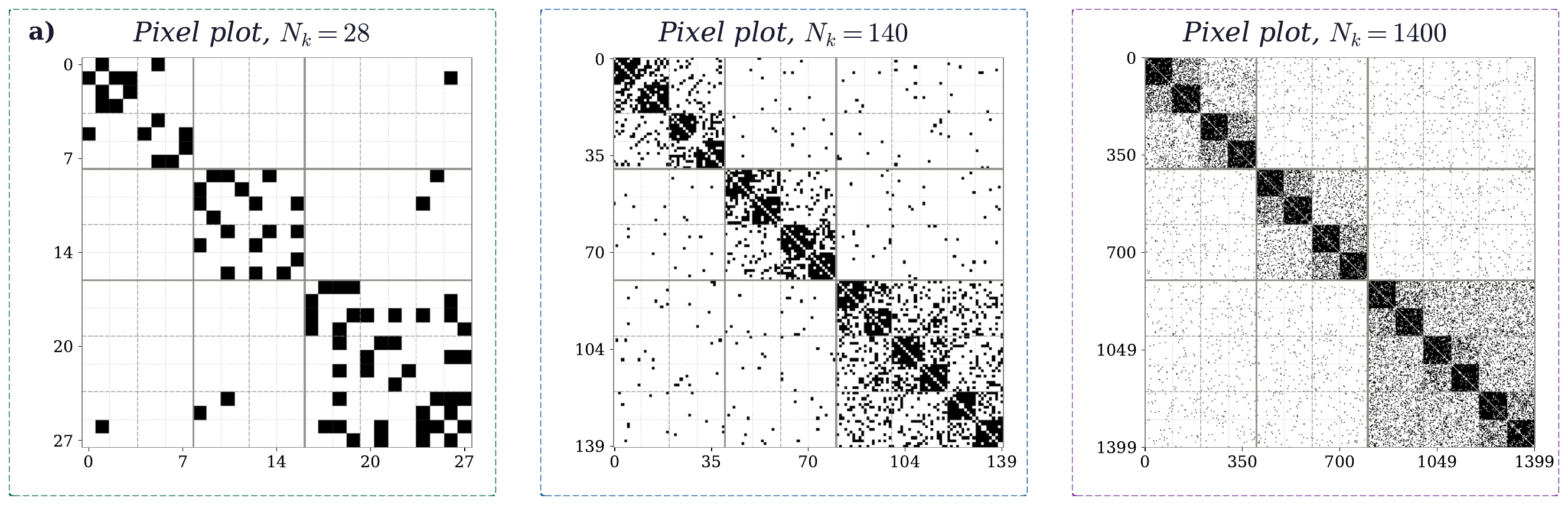}
    
    \vspace{0.4em}
    
    % Fila 2: Eigenvalue scatter plots
    \includegraphics[width=\textwidth]{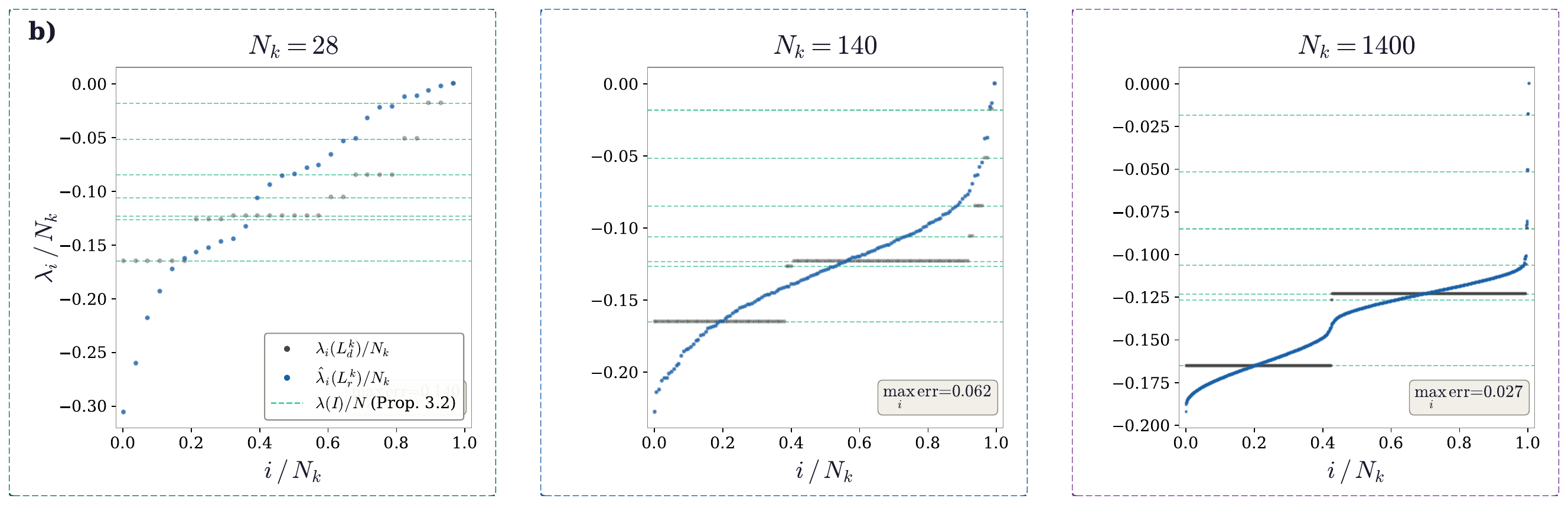}
    
    \vspace{0.4em}
    
    % Fila 3: Paired eigenvalue convergence
    \includegraphics[width=\textwidth]{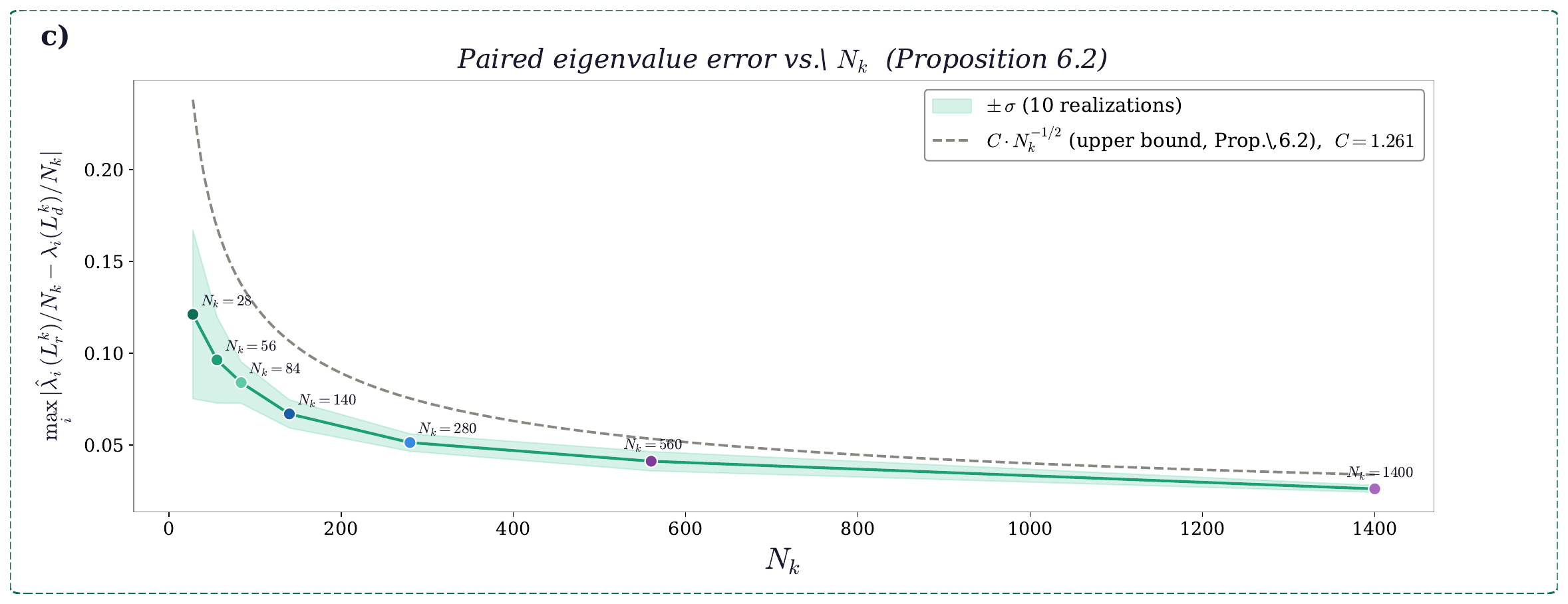}
    
    \caption{Spectral convergence for the ultrametric graphon of Example \ref{example:graphonultra} 
    with kernel $W(x,y) = e^{-d(x,y)/0.1}$ and uniform grid $x_i = i/N_k$.
    \textbf{Top row:} adjacency pixel plots $A_r^k$ for $N_k = 28, 140, 1400$.
    \textbf{Middle row:} normalized paired eigenvalues $\hat\lambda_i(L_r^k)/N_k$ 
    (colored) converging to $\lambda_i(L_d^k)/N_k$ (grey), with theoretical 
    values from Proposition~3.2 shown as dashed lines.
    \textbf{Bottom row:} maximum paired eigenvalue error 
    $\max_i|\hat\lambda_i/N_k - \lambda_i/N_k|$ vs $N_k$, 
    averaged over 10 realizations, with upper bound $C\cdot N_k^{-1/2}$ 
    from Proposition~6.2.}
    \label{fig:spectral_convergence}
\end{figure}

To state the main result of this section we need some preliminary results. 
First, to motivate the result, we ask the following: assuming the first $m$ 
communities are detected via the first $m-1$ nonzero eigenvalues, do the 
eigen-projectors capture the edge structure connecting the clusters? \newline

A classical method of clustering in networks is given by the Fiedler vector. 
The second largest eigenvalue of the Laplacian $L$ of a network, denoted 
$\lambda_2(L)$, is called the \emph{algebraic connectivity}. It satisfies 
$\lambda_2(L)<0$ if and only if the underlying graph is connected. The 
eigenvector corresponding to $\lambda_2(L)$ is known as the \emph{Fiedler 
vector}, typically denoted by $y$ \cite{Fiedler1973}.

A variational characterization of the algebraic connectivity is given by
\[
\lambda_2(L)
=
\max_{\substack{x^\top x = 1 \\ \mathbf{1}^\top x = 0}}
x^\top Lx =
-\min_{\substack{x^\top x = 1 \\ \mathbf{1}^\top x = 0}} \sum_{(i,j)\in E} (x_i-x_j)^2.
\]
A standard spectral clustering procedure for detecting two communities 
\cite{PhaseTransitionsSpectralCD} consists of the following steps:
\begin{enumerate}
    \item Compute the Laplacian $L = A - D$.
    \item Compute the Fiedler vector $y$ associated with $\lambda_2(L)$.
    \item Apply $k$-means clustering to the entries of $y$ to partition 
          the nodes into two groups.
\end{enumerate}
The sign pattern of the Fiedler vector gives a direct criterion for bi-partitioning a graph. We highlight a complementary viewpoint based on its rank-one projector. Under an idealized \emph{perfect separation} regime, 
assume the vertex set splits into two clusters $C_1, C_2$ such that 
$y_i > 0$ for $i \in C_1$ and $y_i < 0$ for $i \in C_2$, where $y$ is 
the unit-norm Fiedler vector. Consider the associated projector
\[
E_F := yy^\top.
\]
Then $(E_F)_{ij} = y_iy_j > 0$ whenever $i,j$ belong to the same cluster, 
and $(E_F)_{ij} < 0$ whenever $i$ and $j$ lie in different clusters. Thus, 
the sign of $E_F$ induces a binary labeling on vertex pairs and, in 
particular, on edges.

The ultrametric spectral analysis gives a generalization of this via the 
following corollary, which specializes Proposition~\ref{prop:eigenvectoraproximation} 
to the ultrametric graphon setting.
\begin{cor}\label{cor:eigenvectorultrametricgraphon}
Let $W$ be an ultrametric graphon with its attached random Laplacian 
$L_r^{k}$ with eigenvalues $0=\hat{\lambda}_1\geq\hat{\lambda}_2\geq 
\dots \geq \hat{\lambda}_{N_k}$. Denote by $\lambda_{n_i}$ the eigenvalues 
of the deterministic Laplacian satisfying $\lambda_{n_i}=\lambda(I_n)$ for 
$i=1,\ldots,m$, where $m$ is the multiplicity of $\lambda(I_n)$. If 
$V=[v_1,\ldots,v_m]\in \mathbb{R}^{N_k\times m}$ and 
$\hat{V}=[\hat{v}_1,\ldots,\hat{v}_{m}]\in \mathbb{R}^{N_k\times m}$ have 
orthonormal columns satisfying $L_d^{k}v_i=\lambda_{n_i}v_i$ and 
$L_r^{k}\hat{v}_i=\hat{\lambda}_{n_i}\hat{v}_i$, then there exists $\delta>0$, 
and for any $\gamma \in (0,\frac{1}{2})$ there is a constant $C=C(\gamma)$ 
such that
\[
\|\hat{V}\hat{V}^{T}-E_{I_n}\|_F\leq \frac{2\sqrt{2}\,m}{\delta}N_k^{\gamma-\frac{1}{2}},
\]
with probability at least $1-2N_ke^{-CN_k^{2\gamma}}$, where
\[
E_{I_n}(x,y)=-\frac{1}{m(I_n)}
\]
if $x,y$ belong to different child clusters of $I_n$, and
\[
E_{I_n}(x,y)=\frac{1}{m(I_j)}-\frac{1}{m(I_n)}
\]
if $x,y$ belong to the same child cluster $I_j$ of $I_n$.
\end{cor}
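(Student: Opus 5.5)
The plan is to obtain the corollary directly from Proposition~\ref{prop:eigenvectoraproximation}. The one new ingredient is to identify the deterministic projector $VV^{T}$ with the explicit kernel $E_{I_n}$ of Proposition~\ref{prop:eigenprojectorsultra}, taken with the counting measure on $X_k$.

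Step one is to fix what $VV^T$ is. The columns of $V$ are an orthonormal basis of the $\lambda(I_n)$-eigenspace of $L_d^k$ for the Euclidean inner product. Hence $VV^T$ is the orthogonal projector onto that eigenspace, independently of the choice of basis. We must assume that $\lambda(I_n)$ does not coincide with the eigenvalue of another interval, since otherwise the spectral gap $\delta$ of Proposition~\ref{prop:eigenvectoraproximation} would not exist. Under that hypothesis, Proposition~\ref{prop:eigenvalueofultrametricoperator} identifies the eigenspace with $\mathcal{V}_{I_n}$. This space consists of the vectors supported in $I_n$ that are constant on each child $I_j\in C(I_n)$ and satisfy $\sum_j m(I_j)\psi|_{I_j}=0$.

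Step two computes the projector onto $\mathcal{V}_{I_n}$. With $m$ the counting measure, the $L^2(m)$ inner product is the Euclidean one, so the kernel of Proposition~\ref{prop:eigenprojectorsultra} acts as $f\mapsto\sum_y E_{I_n}(x,y)f(y)$. It is therefore precisely the matrix $VV^T$. To make this self-contained, I would write the projector as $P_{\mathrm{child}}-P_{I_n}$. Here $P_{\mathrm{child}}=\sum_{j}\frac{1}{m(I_j)}\mathbf 1_{I_j}\mathbf 1_{I_j}^T$ projects onto the functions on $I_n$ that are constant on each child. The term $P_{I_n}=\frac{1}{m(I_n)}\mathbf 1_{I_n}\mathbf 1_{I_n}^T$ projects onto the constants on $I_n$, which are orthogonal to $\mathcal V_{I_n}$ because of the mean-zero condition. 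Reading off the entries gives $\frac{1}{m(I_j)}-\frac{1}{m(I_n)}$ when $x,y$ lie in the same child $I_j$, gives $-\frac{1}{m(I_n)}$ when they lie in different children, and gives $0$ outside $I_n\times I_n$. This matches the statement and is consistent with formula~\eqref{eq:projectorgraphon}. Here $m(I_j)=N_k\mu(I_j)$, because the sampling puts exactly that many points in each interval.

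Step three substitutes $VV^T=E_{I_n}$ into the Frobenius bound of Proposition~\ref{prop:eigenvectoraproximation}. This yields the stated inequality with the same $\delta$, the same $C(\gamma)$ and the same probability $1-2N_ke^{-CN_k^{2\gamma}}$.

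The main obstacle is the existence of $\delta$ uniformly in $k$. The eigenvalues scale as $\lambda(I)=N_k\cdot(\text{a $k$-independent quantity})$, so the normalized gaps between distinct values $\lambda(I)/N_k$ are independent of $k$. I would take $\delta$ to be the minimum over the finitely many intervals $I\neq I_n$ of $|\lambda(I_n)-\lambda(I)|/N_k$, together with the distance to $0$ and to the trivial eigenvalues. This minimum is positive exactly under the non-degeneracy hypothesis. The trivial eigenvalues come from the leaves: each finest interval $I_M$ carries many points, so its eigenvalue $\lambda(I_M)$ also scales with $N_k$, and its normalized value is again $k$-independent. This needs to be checked against the tree structure used in the sampling.
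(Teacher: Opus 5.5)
Your proposal is correct and follows the paper's own route: the corollary is Proposition~\ref{prop:eigenvectoraproximation} with $VV^{T}$ identified, via the counting measure, with the explicit projector of Proposition~\ref{prop:eigenprojectorsultra}, and your scaling remark that $\lambda(I)/N_k$ is independent of $k$ is what makes $\delta$ uniform in $k$. The non-degeneracy hypothesis you add is genuinely needed and is only implicit in the paper (it fails for the equivalent clusters $A$ and $B$ of Example~\ref{example:graphonultra}), but the reason is not loss of the gap, since with $m$ the full multiplicity as in the statement the gap survives a coincidence; what fails is the identification, because $VV^{T}$ then equals $\sum_{I:\,\lambda(I)=\lambda(I_n)}E_I$ rather than $E_{I_n}$.
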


Therefore, for sufficiently large $N_k$ we expect a sign structure in the 
entries of $\hat{V}\hat{V}^{T}$ attached to a random network sampled from 
an ultrametric graphon: if $\lambda(I)$ is the eigenvalue of a cluster $I$ 
decomposed into child clusters $C_n$ for $n=1,\ldots,m$, then with high 
probability the sign pattern of $\hat{V}\hat{V}^{T}$ reveals the community 
structure, with $(\hat{V}\hat{V}^{T})_{ij}>0$ if $i,j \in C_n$ for some 
cluster, and $(\hat{V}\hat{V}^{T})_{ij}<0$ if $i,j$ belong to different 
clusters. In particular, if the ultrametric graphon has a single hierarchical 
level and two communities at the first level, the sign pattern of the Fiedler 
vector of a sampled random network separates the clusters with high probability. The following theorem formalizes this observation and establishes the sign structure of the empirical spectral projector as a rigorous community detection criterion.
\begin{thm}[Sign structure of spectral projectors for ultrametric graphons] 
\label{thm:signstructureofspectralprojectors}
Let $W$ be an ultrametric graphon and let $L_d^k$ denote its associated 
deterministic ultrametric Laplacian at resolution $k$. Let $I$ be an internal 
node of the ultrametric tree associated with $W$, with child clusters
\[
C_1,\dots,C_m,
\]
and let $\lambda(I)$ be the eigenvalue of $L_d^k$ associated with this node, 
with multiplicity $m-1$. Let $L_r^k$ be the random Laplacian obtained by 
sampling a network with $N_k$ vertices from $W$, and let
\[
\hat{V} \in \mathbb{R}^{N_k \times (m-1)}
\]
be a matrix whose columns form an orthonormal basis of the eigenspace of 
$L_r^k$ associated with the $m-1$ eigenvalues closest to $\lambda(I)$. 
Denote by $\hat{E} := \hat{V}\hat{V}^\top$ the corresponding spectral 
projector. Then, for sufficiently large $N_k$, with high probability, the 
sign pattern of the entries of $\hat{E}$ reveals the community structure 
induced by the child clusters $\{C_n\}_{n=1}^m$ in the following sense:
\begin{itemize}
\item if $i,j \in C_n$ for some $n$, then $(\hat{E})_{ij} > 0$,
\item if $i \in C_n$ and $j \in C_{n'}$ with $n \neq n'$, then $(\hat{E})_{ij} < 0$.
\end{itemize}
Moreover, in the special case where the ultrametric graphon has a single 
hierarchical level with exactly two clusters, the eigenvalue $\lambda(I)$ 
has multiplicity one, and the Fiedler vector of the sampled random Laplacian 
$L_r^k$ separates the two clusters by its sign with high probability.
\end{thm}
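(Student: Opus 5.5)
The plan is to compare $\hat E=\hat V\hat V^\top$ entrywise with the deterministic projector $E:=VV^\top$ onto the $\lambda(I)$-eigenspace of $L_d^k$, and to show that the perturbation is uniformly smaller than the smallest entry of $E$ in absolute value. We assume, as the statement implicitly does, that $\lambda(I)$ has multiplicity exactly $m-1$ in $L_d^k$, i.e.\ no other internal node produces the same normalized eigenvalue.

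\textbf{Step 1: the reference projector and the gap.} For the counting measure on $X_k$, every cluster $J$ has $m(J)=N_k\mu(J)$. Proposition~\ref{prop:eigenprojectorsultra} and Corollary~\ref{cor:eigenvectorultrametricgraphon} then give
\[
E_{ij}=\frac{1}{N_k}\Bigl(\frac{1}{\mu(C_n)}-\frac{1}{\mu(I)}\Bigr)>0 \quad (i,j\in C_n),\qquad E_{ij}=-\frac{1}{N_k\,\mu(I)}<0 \quad (i\in C_n,\ j\in C_{n'},\ n\ne n').
\]
Positivity in the first case uses $\mu(C_n)<\mu(I)$, which holds because $I$ has at least two children. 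So $|E_{ij}|\ge c_0/N_k$ for all $i,j\in I$, with
\[
c_0=\min\Bigl\{\tfrac1{\mu(I)},\ \min_n\bigl(\tfrac1{\mu(C_n)}-\tfrac1{\mu(I)}\bigr)\Bigr\}
\]
independent of $k$.

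By Proposition~\ref{prop:eigenvalueofultrametricoperator}, the normalized eigenvalues $\lambda_j/N_k$ take finitely many values independent of $k$. Hence there is a $k$-independent $\delta>0$ separating $\lambda(I)/N_k$ from the rest of the normalized spectrum. Proposition~\ref{prop:eigenvalueaprox} then shows that, with probability at least $1-2N_ke^{-CN_k^{2\gamma}}$, exactly $m-1$ eigenvalues of $L_r^k/N_k$ lie within $\delta/3$ of $\lambda(I)/N_k$. These are the ``$m-1$ closest'' eigenvalues in the statement, so $\hat V$ is well defined and Corollary~\ref{cor:eigenvectorultrametricgraphon} applies.

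\textbf{Step 2 (main obstacle): an entrywise bound.} The Frobenius bound $\|\hat E-E\|_F\lesssim N_k^{\gamma-1/2}$ from Corollary~\ref{cor:eigenvectorultrametricgraphon} is far too weak here, since it only controls $\max_{ij}|\hat E_{ij}-E_{ij}|$ by $N_k^{\gamma-1/2}\gg 1/N_k$. We therefore need an $\ell_{2\to\infty}$ perturbation bound. The eigenvectors of $L_d^k$ are block-constant, so they are delocalized: $\|V\|_{2\to\infty}\le C_1N_k^{-1/2}$.

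To control $\|\hat V\hat O-V\|_{2\to\infty}$, first apply an $\ell_{2\to\infty}$ eigenvector perturbation theorem for symmetric matrices with independent bounded entries, such as the entrywise eigenvector analysis of Abbe--Fan--Wang--Zhong or Cape--Tang--Priebe. The noise $H=L_r^k-L_d^k$ has independent, bounded, centered off-diagonal entries, plus a diagonal given by centered degree fluctuations of order $N_k^{1/2+\gamma}$ with high probability by Hoeffding. Apply the theorem with eigengap $N_k\delta$ and $\|H\|\le N_k^{1/2+\gamma}$, the latter from Lemma~2.6 of \cite{BramburgerHolzer2023}. The target bound is
\[
\|\hat V\hat O-V\|_{2\to\infty}\le \frac{C_2}{\delta}\,N_k^{\gamma-1}.
\]
If a black-box theorem is awkward to match, the fallback is a direct leave-one-out argument. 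Write $\hat V=L_r^k\hat V\hat\Lambda^{-1}$ and expand $e_i^\top H\hat V$. Then bound $e_i^\top HV$ by Hoeffding row by row, using that $V$ is block-constant, and control the remainder by $\|H\|\,\|\hat V\hat O-V\|/(N_k\delta)$.

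\textbf{Step 3: conclusion.} Write
\[
\hat E-E=(\hat V\hat O-V)(\hat V\hat O)^\top+V(\hat V\hat O-V)^\top.
\]
This gives
\[
\max_{ij}|\hat E_{ij}-E_{ij}|\le \|\hat V\hat O-V\|_{2\to\infty}\bigl(\|\hat V\hat O\|_{2\to\infty}+\|V\|_{2\to\infty}\bigr)\lesssim N_k^{\gamma-1}\cdot N_k^{-1/2}=N_k^{\gamma-3/2}.
\]
For $\gamma<1/2$ and $N_k$ large this is below $c_0/(2N_k)$, so $\hat E_{ij}$ has the sign of $E_{ij}$ for every pair $i,j\in I$. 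A union bound over the events used in Steps 1 and 2 gives probability tending to one.

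For a single level with two children, $m-1=1$ and $\hat E=\hat y\hat y^\top$ with $\hat y$ the Fiedler vector. Take $i=j$ to get $\hat y_i^2>0$, so no entry of $\hat y$ vanishes. The sign pattern $\hat y_i\hat y_j>0$ within a cluster and $\hat y_i\hat y_j<0$ across clusters then forces $\hat y$ to have constant sign on each cluster, with opposite signs on the two clusters.
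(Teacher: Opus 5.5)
Your diagnosis of the paper's argument is correct, and your route genuinely differs from it. The paper gives no proof of this theorem beyond Corollary~\ref{cor:eigenvectorultrametricgraphon}; it presents the theorem as a formalization of that Frobenius bound. But $|E_{ij}|\asymp 1/N_k$, while $\|\hat E-E\|_F\lesssim N_k^{\gamma-1/2}$ controls individual entries only up to $N_k^{\gamma-1/2}$. So it shows at most that a vanishing fraction of pairs get the wrong sign. Your Steps~1 and~3 correctly reduce the theorem to $\|\hat V\hat O-V\|_{2\to\infty}=o(N_k^{-1/2})$.

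The gap is in Step~2, where all the work lies. The results you cite are designed for an incoherent, approximately low-rank mean and row--column independent noise. Neither holds here:
\begin{itemize}
\item $\mathbb{E}L_r^k=L_d^k$ contains the full-rank diagonal $-D_d^k$, whose rows have norm $\asymp N_k$, comparable to the gap $N_k\delta$;
\item every diagonal entry of $L_r^k$ depends on an entire row of $A_r^k$.
\end{itemize}
Your fallback has the same hole. In $\hat V=(L_d^k+H)\hat V\hat\Lambda^{-1}$, the deterministic part contributes $-d_i\hat V_{i\cdot}\hat\Lambda^{-1}$, where $d_i$ is the degree of $i$ in $A_d^k$. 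This is the very row you are estimating, multiplied by $d_i/|\lambda(I)|$, a factor of order one that in the assortative case exceeds $1$. It is not a remainder controlled by $\|H\|$, and no contraction argument closes.

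The missing idea is to move the degrees to the left-hand side. Write $\hat d_i$ for the degree in $A_r^k$ and $\hat\Lambda$ for the diagonal matrix of the $m-1$ selected eigenvalues. Row $i$ of the two eigen-equations reads
\[
\hat V_{i\cdot}\bigl(\hat\Lambda+\hat d_i\,\mathrm{Id}\bigr)=e_i^\top A_r^k\hat V,\qquad \bigl(\lambda(I)+d_i\bigr)V_{i\cdot}=e_i^\top A_d^kV .
\]
The second formula in Proposition~\ref{prop:eigenvalueofultrametricoperator} gives $d_i=-\lambda(I_M(i))+O(1)$, where $I_M(i)$ is the last-level interval containing $x_i$. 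Hence $|\lambda(I)+d_i|\ge\delta N_k-O(1)$. This is exactly where the hypothesis that $\lambda(I)$ has multiplicity $m-1$ (so it differs from every last-level eigenvalue) is used.

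Now multiply the first identity by $\hat O$ and subtract the second. The following ingredients then give $\|\hat V\hat O-V\|_{2\to\infty}\lesssim N_k^{\gamma-1}$:
\begin{itemize}
\item $\|e_i^\top A_d^k\|_2$ and $\|e_i^\top(A_r^k-A_d^k)\|_2$ are both at most $\sqrt{N_k}$;
\item the Davis--Kahan bound $\|\hat V\hat O-V\|\lesssim N_k^{\gamma-1/2}$;
\item Hoeffding for $e_i^\top(A_r^k-A_d^k)V$;
\item concentration of $\hat d_i-d_i$, and Weyl for $\hat\Lambda-\lambda(I)\mathrm{Id}$;
\item the a priori bound $\|\hat V_{i\cdot}\|\le 2/(\delta\sqrt{N_k})$, read off from the first identity.
\end{itemize}
No leave-one-out is needed in this dense regime. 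This, like your delocalization of $V$, uses that $I$ sits at level $<M$.

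Finally, the Fiedler clause needs a hypothesis that neither you nor the statement checks. Your argument gives the sign property for the eigenvector of $L_r^k$ whose eigenvalue is closest to $\lambda([0,1])$. That is the Fiedler vector only if $\lambda([0,1])=\lambda_2(L_d^k)$. Since $\lambda(I_j)-\lambda([0,1])=-N_k\mu(I_j)\bigl(w(h(I_j))-w(h([0,1]))\bigr)$, this requires $w(h([0,1]))<\min_j w(h(I_j))$. Otherwise the Fiedler matrix of $L_d^k$ is supported in a single cluster, as part~2 of Theorem~\ref{thm:phasetransition} shows, and the clause fails. Under this assumption, Weyl's inequality identifies $\hat\lambda_2$ with the perturbed root eigenvalue, and your sign argument goes through.
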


\begin{figure}[H]
    \centering
    \includegraphics[width=\textwidth]{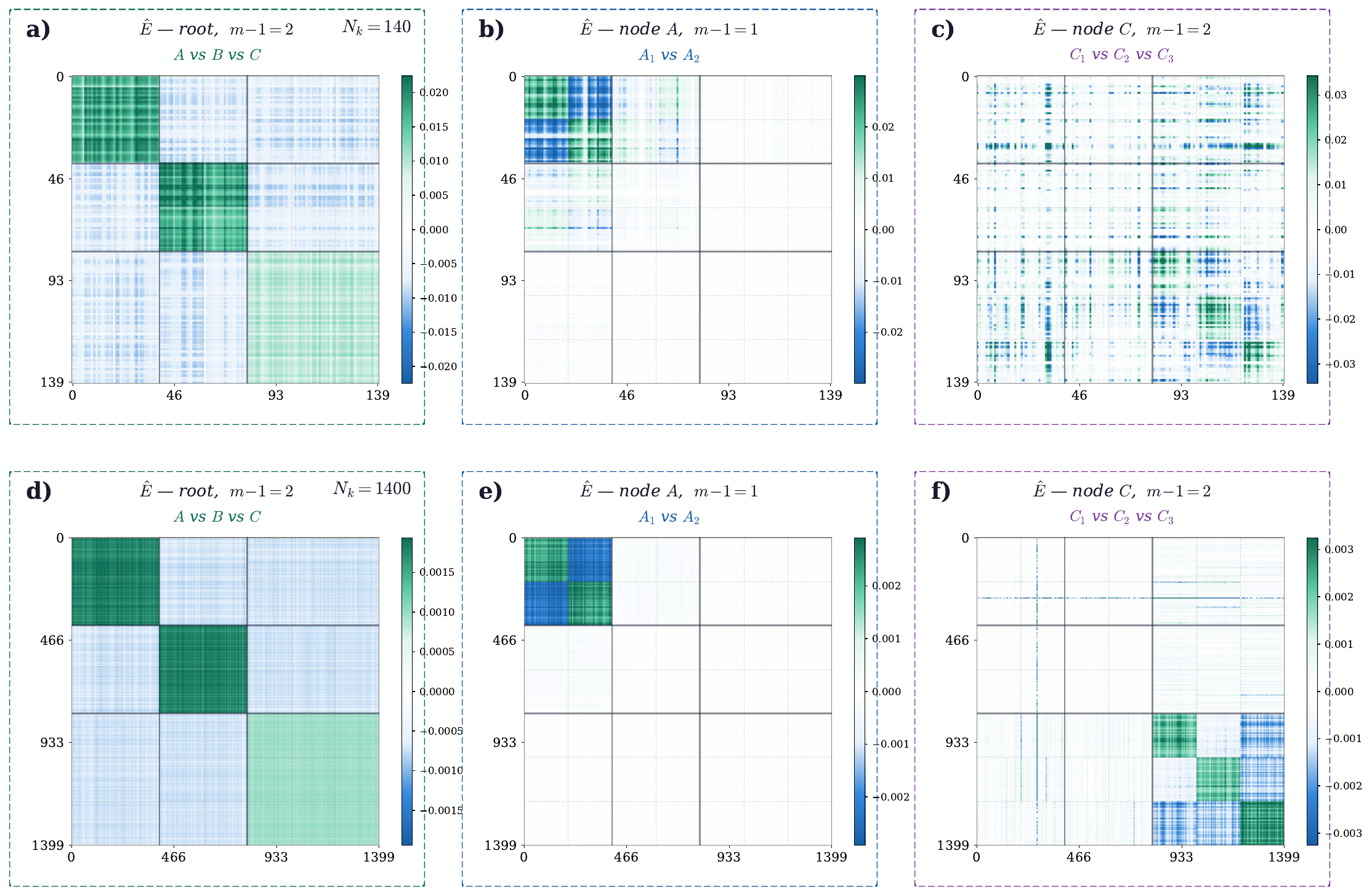}

    \caption{Sign structure of the empirical spectral projectors 
    $\hat{E} = \hat{V}\hat{V}^\top$ for three internal nodes of the 
    ultrametric tree of Example \ref{example:graphonultra}, sampled from $L_r^k$ 
    (Theorem~\ref{thm:signstructureofspectralprojectors}).
    The colormap ranges from blue (negative entries, $\hat{E}_{ij} < 0$, 
    nodes in different child clusters) to green (positive entries, 
    $\hat{E}_{ij} > 0$, nodes in the same child cluster).
    \textbf{Top row} ($N_k = 140$): \textbf{a)} the projector associated with the root 
    detects $A$ vs $B$ vs $C$ very good; \textbf{b) }the projector for node $A$ 
    separates $A_1$ vs $A_2$; \textbf{c)} the projector for node $C$ shows partial 
    detection of $C_1$ vs $C_2$ vs $C_3$, reflecting the small spectral 
    gap $\delta$ at this level (Corollary~\ref{cor:eigenvectorultrametricgraphon}).
    \textbf{Bottom row} ($N_k = 1400$): as $N_k$ increases all three 
    projectors \textbf{d)-f)}  converge to the theoretical values $E_I$ given by 
    Proposition \ref{prop:eigenprojectorsultra}, with the block sign structure becoming sharply 
    visible across all hierarchy levels.}
    \label{fig:projector_signs}
\end{figure}

\section{Spectral threshold in spectral community detection}

%We now want to generalize the results presented in \cite{PhaseTransitionsSpectralCD}, where a phase transition was shown when the community detection algorithm described above using the Fiedler vector is employed, we descried the results of that work: Let $G$ be an undirected graph with $n=n_1+n_2$ vertices, composed of two connected subnetworks $G_1,G_2$ of sizes $n_1,n_2$. Let \(L=-L_{-}\) be the graph Laplacian of \(G\) and $L^{1}=-L_{-}^{1},L^{2}=-L_{-}^2$  the graph Laplacians of the subgraphs $G_1,G_2$ respectively. Inter-community edges are added independently with probability $p\in[0,1]$. Let $y=(y_1^\top,y_2^\top)^\top$ be the Fiedler vector of $L_{-}$, normalized by $\mathbf{1}_{n_1}^\top y_1+\mathbf{1}_{n_2}^\top y_2=0$ and $\|y\|_2=1$, with eigenvalue \(\lambda_2(L_{-})\). Assume $n_1,n_2\to\infty$ with $n_1/n_2\to c\in(0,\infty)$. 

%With probability one, a phase transition occurs at a critical value $p^*$:
%\begin{itemize}
%\item \emph{(Case 1: detectable)} If $p<p^*$, then $y_1,y_2$ converge to constant
%vectors with opposite signs and spectral clustering correctly separates $G_1$ and $G_2$ and 
%\[\frac{\lambda_2(L_{-})}{n}\rightarrow p,\quad a.s.\]
%\item \emph{(Case 2: undetectable)} If $p>p^*$, then $\mathbf{1}_{n_1}^\top y_1\to0$ and $\mathbf{1}_{n_2}^\top y_2\to0$.
%\end{itemize}

%In the balanced case $n_1=n_2=n/2$, the critical threshold satisfies
%\[
%p^* \;\xrightarrow{\mathrm{a.s.}}\;
%\frac{\lambda_2(L_{-}^{1})+\lambda_2(L_{-}^{2})-|\lambda_2(L_{-}^{1})-\lambda_2(L_{-}^{2})|}{n}.
%\]

In this section we use the ultrametric graphon theory introduced in the 
previous sections to give a generalization of the results presented in 
\cite{PhaseTransitionsSpectralCD}. We show the existence of a detectability threshold and give an 
interpretation of this threshold in terms of the structure of the 
communities.

Let \(W:[0,1]^2\rightarrow [0,1]\) be an ultrametric graphon. Define the \emph{Fiedler matrix} of its deterministic Laplacian $L_d^k$ 
as $E_F := E_1$, where $E_1$ is the spectral projector attached to the 
largest nonzero eigenvalue of $L_d^k$,
\begin{equation} \label{eq:maximumfiedler}
\lambda_2(L_d^k)
=
\max_{\substack{x^\top x = 1 \\ \mathbf{1}^\top x = 0}}
x^\top L_d^k\, x.
\end{equation}
The multiplicity of $\lambda_2(L_d^k)$ may be greater than one. If 
$w$ is non-increasing, then
\[
\lambda_2(L_d^k) = -m([0,1])\,w(h([0,1])) = -N_k\,w(h([0,1])),
\]
where $w(h([0,1]))$ is the inter-cluster connection probability. The 
following inequality holds in this case:
\[
\lambda_2(L_d^k) > -N_k\,w(h([0,1])) - m(I_i)
\bigl(w(h(I_i)) - w(h([0,1]))\bigr) =: \lambda_i,
\]
where $I_i$ is a child interval of $[0,1]$ and $\lambda_i$ its attached 
eigenvalue. If for some child interval $I_i$ it holds that 
$w(h([0,1])) > w(h(I_i))$, then
\[
-N_k\,w(h([0,1])) < -N_k\,w(h([0,1])) - m(I_i)
\bigl(w(h(I_i)) - w(h([0,1]))\bigr).
\]
Therefore, the maximum in \eqref{eq:maximumfiedler} is no longer attained 
at the eigenvalue of the root interval $[0,1]$ but rather at the eigenvalue 
of some child interval:
\[
\lambda_2(L_d^k)
= -N_k\,w(h([0,1])) - m(I_i)\bigl(w(h(I_i))-w(h([0,1]))\bigr)
\]
for some $I_i$ satisfying $w(h([0,1])) > w(h(I_i))$. The attached 
spectral projector (Fiedler matrix) has support in $I_i\times I_i$ 
(see \eqref{eq:projectorgraphon}), eliminating any possibility of 
detecting the full community structure in the sense of 
Theorem~\ref{thm:signstructureofspectralprojectors}.
The threshold is given by $p^* = \min_i w(h(I_i))$. Detectability holds 
when $w(h([0,1])) < p^*$ and fails when $w(h([0,1])) > p^*$. In the 
detectable case, Corollary~\ref{cor:eigenvalueultrametricgraphon} gives
\[
\frac{|\hat{\lambda}_2|}{N_k} \rightarrow w(h([0,1])), \qquad \text{a.s.}
\]
The collapse of  the Fiedler matrix is exemplified in Figure \ref{fig:detectability_threshold}.

This recovers the results of \cite{PhaseTransitionsSpectralCD}, where the 
authors study the particular case of the SBM with two communities of equal 
size, corresponding to the ultrametric graphon with root $[0,1]$ and two 
children $[0,\tfrac{1}{2})$ and $[\tfrac{1}{2},1]$, where 
$p^* = \min\{p_1, p_2\}$. The present result allows for 
an arbitrary number of communities of heterogeneous sizes.

\begin{figure}[H]
    \centering
\includegraphics[width=\textwidth]{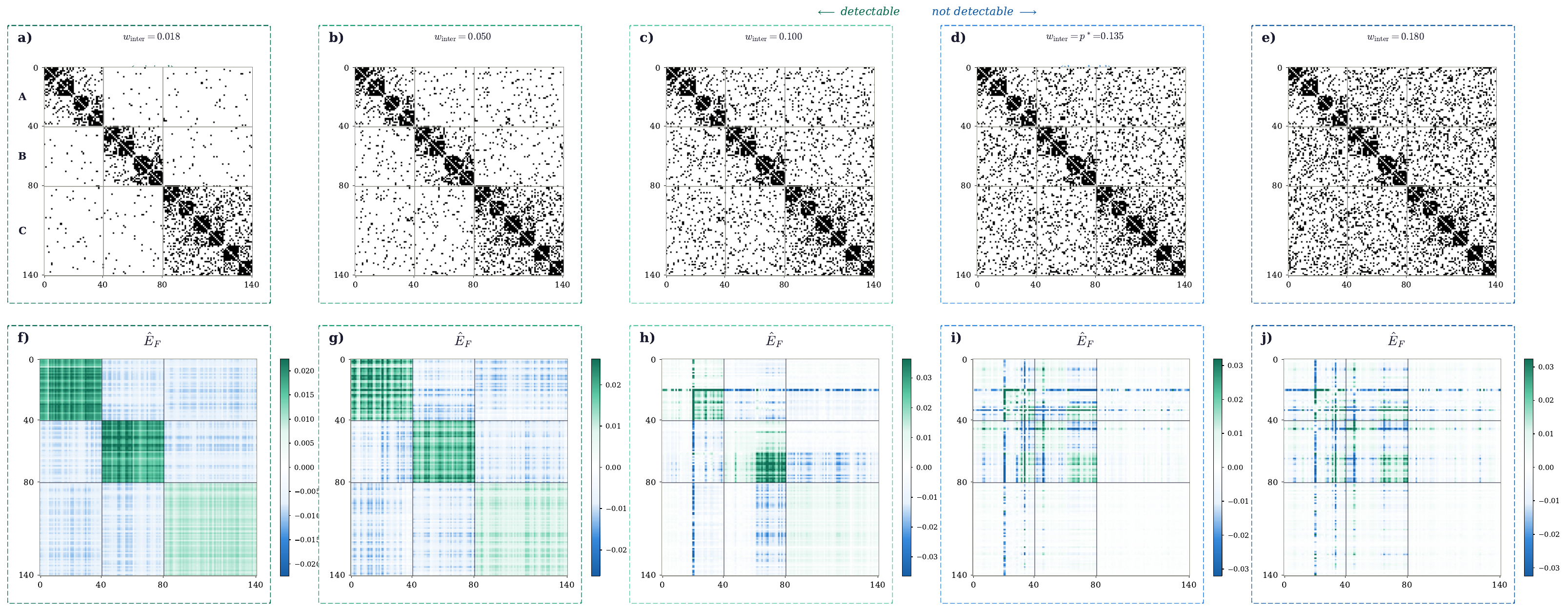}
    \caption{Detectability threshold for the ultrametric graphon of 
    Example~\ref{example:graphonultra}, with first  partition 
    $[0,1] = A \sqcup B \sqcup C$, with $\mu(A)=\mu(B)=\frac{4}{14}$ 
    and $\mu(C)=\frac{6}{14}$. The inter-cluster connection probability 
    $w_\mathrm{inter} = w(h([0,1]))$ is varied while the intra-cluster 
    structure is held fixed, with threshold 
    $p^* = \min\{w(h(A)),\, w(h(B)),\, w(h(C))\} = 0.135$.
    \textbf{Top row~(a)--(e):} adjacency pixel plots of a sampled graph 
    $G \sim L_r^k$ with $N_k = 140$.
    \textbf{Bottom panel~(f)--(j):} empirical Fiedler matrix 
    $\hat{E}_F = \hat{V}\hat{V}^\top$, where $\hat{V} \in \mathbb{R}^{N_k \times 2}$ 
    collects the two Fiedler vectors of $L_r^k$. 
    For $w_\mathrm{inter} < p^*$ (panels a--c, f--h) the sign structure 
    of $\hat{E}_F$ correctly identifies the three communities $A$, $B$, $C$. 
    At the threshold (panels d, i) the Fiedler matrix loses the global 
    block structure, and beyond the threshold (panels e, j) it collapses 
    to the support of a single community, making detection impossible.}
    \label{fig:detectability_threshold}
\end{figure}
Since spectral methods based on the 
leading eigenvectors and its eigenvectors capture only the coarsest level of the community 
hierarchy, the natural setting for this analysis is the single-step 
partition. We therefore restrict attention to the case 
$[0,1]=\bigsqcup\, I_i$, where $I_i$ are the children intervals of 
$[0,1]$, as shown in Figure~\ref{fig:treeonelevelgraphon}.
\begin{figure}[H]
    \centering
    \includegraphics[width=0.7\textwidth]{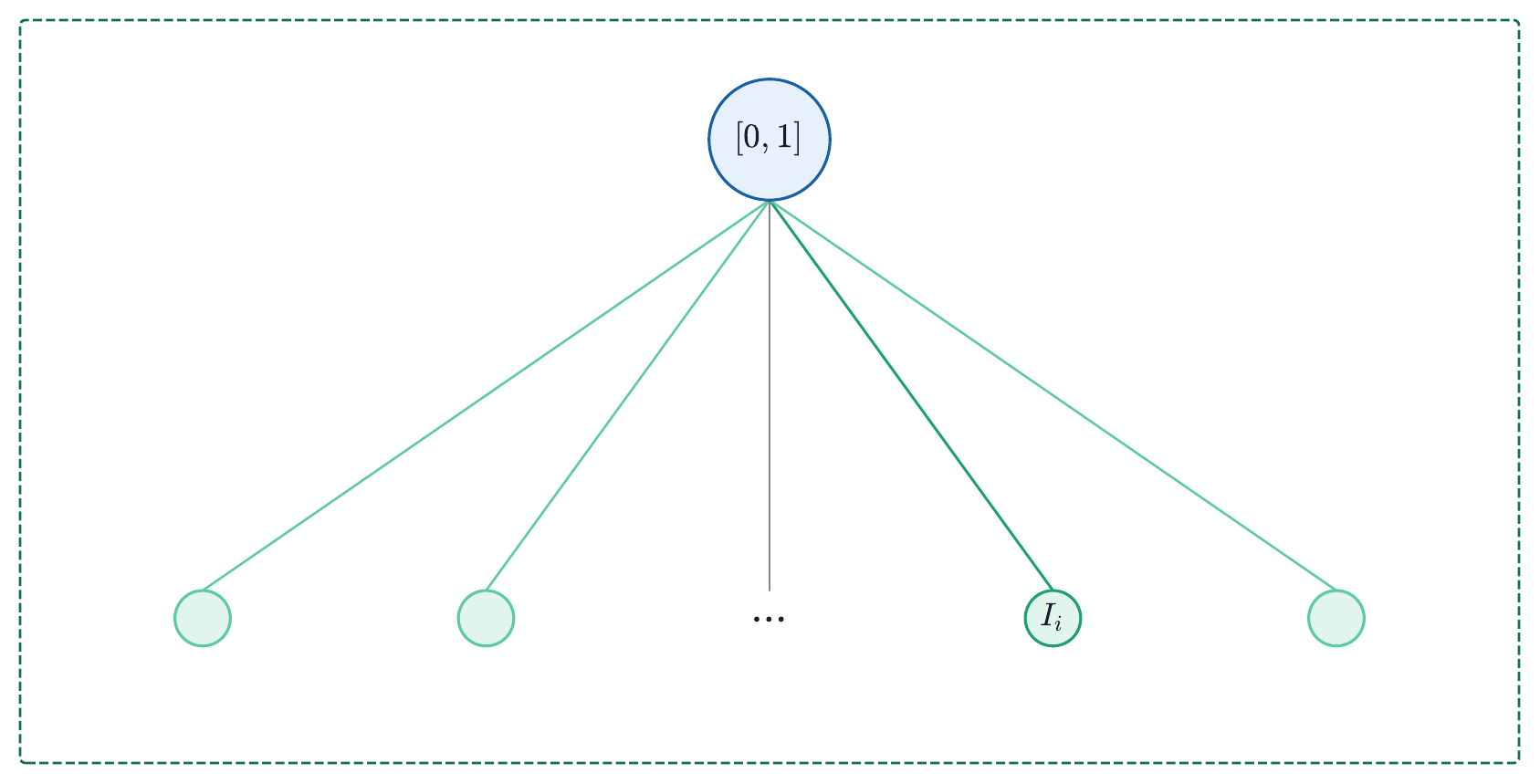}
    \caption{Topological tree of a single step partition.}
    \label{fig:treeonelevelgraphon}
\end{figure}

Instead of requiring $W(x,y) = w(h(I_i))$ for $x,y \in I_i \times I_i$, 
we now consider a broader family of graphons in order to model more general 
intra-community graph distributions.

\begin{definition}
We say that the graphon $W:[0,1]\times [0,1]\to[0,1]$ is a 
\emph{one-level hierarchical graphon} if
\begin{enumerate}
\item for each $i$, $W(x,y)=W_i(x,y)$ for all $x,y\in I_i$, where $W_i$ 
is an arbitrary continuous symmetric function;
\item $W(x,y)=w(h([0,1]))$ whenever $x\in I_i$, $y\in I_j$ with $i\neq j$.
\end{enumerate}
\end{definition}

A one-level hierarchical graphon models a family of graphs $\{G_i\}$, 
where $w(h([0,1]))$ is the inter-community connection probability and 
$W_i:=W|_{I_i\times I_i}$ is the intra-community edge probability of $G_i$. 
We show the existence of a threshold $p^*$ such that if $w(h([0,1]))<p^*$ 
then detectability is possible, that is, there exists a Fiedler matrix 
satisfying the properties of Theorem~\ref{thm:signstructureofspectralprojectors}. 
On the other hand, if $w(h([0,1]))>p^*$, the detection of the real number of communities becomes 
impossible: the Fiedler matrix and the spectral gap no longer carry 
information about the community structure.

First we show that the space
\[
\mathcal{V}_{\mathrm{root}}:=\left\{ \psi: \psi|_{I_i} \, \text{is constant 
for all } i,\, \sum_{i} m(I_i)\psi|_{I_i}=0\right\}
\]
remains an eigenspace of the one-level hierarchical Laplacian.
\begin{proposition}
\label{prop:eigenvaluequasiultragrpahon}
Let $[0,1]=\bigsqcup_{i} I_i$ be a partition into intervals with 
Lebesgue measure $\mu(I_i)>0$. Let $N_k=kN\ge 2$ where $N$ is the LCM 
of the denominators of the lengths $\mu(I_i)$ and $k\ge 1$. Let $X$ be 
the set of $N_k$ total sampled points such that $m(I_i)=N_k\,\mu(I_i)$, 
where $m$ is the counting measure on $X$. Assume that 
$W:[0,1]\times[0,1]\to[0,1]$ is a one-level hierarchical graphon. Define 
the Laplacian operator
\[
(Lf)(x):=\sum_{y\in X} W(x,y)\bigl(f(y)-f(x)\bigr), \qquad x\in X.
\]
Then every $f\in \mathcal{V}_{\mathrm{root}}$ is an eigenvector of $L$ 
with eigenvalue $-w(h([0,1]))\,m(X)$:
\[
Lf = -w(h([0,1]))\,m(X)\,f.
\]
\end{proposition}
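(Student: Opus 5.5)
We verify the eigenvalue equation pointwise by a direct computation that mirrors the proof of Proposition~\ref{prop:eigenvalueofultrametricoperator}. The intra-community kernels $W_i$ never contribute, because $f$ is constant on each $I_i$.

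Fix $f\in\mathcal{V}_{\mathrm{root}}$, write $c_i:=f|_{I_i}$ and $p:=w(h([0,1]))$, and fix $x\in X$ with $x\in I_i$. We split the sum defining $(Lf)(x)$ according to whether $y\in I_i$ or $y\in I_j$ with $j\neq i$:
\[
(Lf)(x)=\sum_{y\in X\cap I_i} W_i(x,y)\bigl(c_i-c_i\bigr)+\sum_{j\neq i}\sum_{y\in X\cap I_j} p\,\bigl(c_j-c_i\bigr).
\]
The first sum vanishes identically. This is the only place the arbitrary functions $W_i$ appear, and since they drop out, no continuity or other assumption on them is needed. The second sum equals $p\sum_{j\neq i} m(I_j)(c_j-c_i)$, where $m(I_j)=N_k\mu(I_j)$ counts the sampled points in $I_j$.

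Next we use the two defining properties of $\mathcal{V}_{\mathrm{root}}$. First, $\sum_{j} m(I_j)c_j=0$, so $\sum_{j\neq i} m(I_j)c_j=-m(I_i)c_i$. Second, $\sum_{j\neq i}m(I_j)=m(X)-m(I_i)$. Substituting both gives
\[
(Lf)(x)=p\bigl(-m(I_i)c_i-(m(X)-m(I_i))c_i\bigr)=-p\,m(X)\,c_i=-w(h([0,1]))\,m(X)\,f(x).
\]
Since $x$ was arbitrary, $Lf=-w(h([0,1]))\,m(X)\,f$. Here $m(X)=N_k$.

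The argument has no real obstacle; the only point needing care is bookkeeping. The sampling scheme must place exactly $m(I_i)=N_k\mu(I_i)$ points in each $I_i$, so that the weighted mean-zero condition in $\mathcal{V}_{\mathrm{root}}$ is taken with respect to the same counting measure that appears in $L$. This holds by construction, since $N$ is the LCM of the denominators of the lengths $\mu(I_i)$. The diagonal term $y=x$ also needs no separate treatment, because it lies in the first sum and vanishes regardless of how $W$ is defined there.
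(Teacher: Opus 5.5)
Your proof is correct and follows the paper's argument essentially line for line. You split $(Lf)(x)$ into the intra-community sum, which vanishes because $f$ is constant on $I_i$, and the inter-community sum with constant weight $w(h([0,1]))$, then use $\sum_j m(I_j)c_j=0$ and $\sum_{j\neq i}m(I_j)=m(X)-m(I_i)$ to obtain $-w(h([0,1]))\,m(X)\,f(x)$.
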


\begin{proof}
Fix $f\in \mathcal{V}_{\mathrm{root}}$. By definition there exist constants 
$a_1,\dots,a_K$ such that $f(x)=a_i$ for all $x\in X\cap I_i$ and 
$\sum_{i} m(I_i)a_i=0$. Let $x\in X\cap I_i$. Splitting the sum defining 
the Laplacian yields
\[
(Lf)(x)
=
\sum_{y\in X\cap I_i} W_i(x,y)\bigl(a_i-a_i\bigr)
+
\sum_{j\neq i}\sum_{y\in X\cap I_j} W(x,y)\bigl(a_j-a_i\bigr).
\]
The first term vanishes identically, so the intra-community kernels $W_i$ 
do not contribute. For $j\neq i$, the one-level assumption implies 
$W(x,y)=w(h([0,1]))$ for all $y\in X\cap I_j$, hence
\[
(Lf)(x)
=
\sum_{j\neq i}\sum_{y\in X\cap I_j} w(h([0,1]))\,(a_j-a_i)
=
w(h([0,1]))\sum_{j\neq i} m(I_j)\,(a_j-a_i).
\]
Using $\sum_{j} m(I_j)a_j=0$, we have $\sum_{j\neq i}m(I_j)a_j=-m(I_i)a_i$ 
and $\sum_{j\neq i}m(I_j)=m(X)-m(I_i)$. Hence,
\[
(Lf)(x)
=
w(h([0,1]))\bigl(-m(I_i)a_i - a_i(m(X)-m(I_i))\bigr)
=
-w(h([0,1]))\,m(X)\,a_i.
\]
Since $f(x)=a_i$ for all $x\in X\cap I_i$, it follows that 
$(Lf)(x)=-w(h([0,1]))\,m(X)\,f(x)$ for every $x\in X$, and therefore 
$Lf=-w(h([0,1]))\,m(X)\,f$.
\end{proof}

The proof of the following proposition can be found in 
\cite{vizuete2021laplacian}; this result characterizes the limiting 
behavior of the spectral gap of the random Laplacians attached to a 
continuous graphon $W$.

\begin{prop}
\label{prop:limitspectralgapgraphon}
Let $W$ be a continuous graphon. Let $M$ be the number of isolated 
eigenvalues of the Laplacian operator $L$ of $W$ in the interval 
$[0,\delta_W]$, where $\delta_W:=\inf_{x\in[0,1]}\int_0^1 W(x,y)\,dy$, 
counted with their multiplicities, and let
\[
0=\kappa_1\ge\kappa_2\ge\cdots\ge\kappa_M
\]
be these eigenvalues. Define $\rho=\kappa_2$ if $M\ge 2$, and 
$\rho=\delta_W$ if $M=1$. Then
\[
\lim_{N\to\infty}\frac{-\hat{\lambda}_2(L_r^N)}{N}=\rho,\qquad\text{a.s.,}
\]
where $L_r^N$ is the $N$-vertex random Laplacian attached to $W$.
\end{prop}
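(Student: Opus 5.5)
The plan is to split the random Laplacian into a deterministic part and a fluctuation, control the fluctuation using the concentration estimate already used in Proposition~\ref{prop:eigenvalueaprox}, and then study the deterministic normalized Laplacian $L_d^N/N$ as a discretization of the graphon Laplacian operator
\[
L_W f(x)=\int_0^1 W(x,y)f(y)\,dy-d_W(x)f(x),\qquad d_W(x)=\int_0^1W(x,y)\,dy .
\]
For the first step, Lemma~2.6 of \cite{BramburgerHolzer2023} gives $\|L_r^N-L_d^N\|\le N^{1/2+\gamma}$ with probability at least $1-2Ne^{-CN^{2\gamma}}$. These failure probabilities are summable in $N$, so Borel--Cantelli upgrades the bound to an almost sure statement for all large $N$. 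Combined with Weyl's inequality as in Proposition~\ref{prop:eigenvalueaprox}, this gives $|\hat\lambda_2(L_r^N)-\lambda_2(L_d^N)|/N\to0$ almost surely. The claim therefore reduces to the deterministic limit $-\lambda_2(L_d^N)/N\to\rho$.

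For the deterministic limit, I would embed $\mathbb{R}^N$ isometrically into $L^2[0,1]$ as step functions on the uniform partition $\{I_i\}$. Under this embedding, $L_d^N/N$ becomes $T_N-D_N$, where:
\begin{itemize}
\item $T_N$ is the integral operator with the step kernel $W(x_i,x_j)$;
\item $D_N$ is multiplication by the step function $\frac1N\sum_j W(x_i,x_j)$.
\end{itemize}
Because $W$ is uniformly continuous on $[0,1]^2$, $T_N\to T_W$ and $D_N\to d_W$ uniformly. Equivalently, the compression of $L_W$ to step functions differs from $L_d^N/N$ by an operator of vanishing norm, up to a diagonal correction of order $1/N$. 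The spectral picture of $L_W$ is then standard. It is a compact self-adjoint perturbation $T_W$ of the multiplication operator $-d_W$, so by Weyl's theorem its essential spectrum is the range $[-\max d_W,-\delta_W]$. Above $-\delta_W$ it has only isolated eigenvalues of finite multiplicity, namely $0=-\kappa_1\ge-\kappa_2\ge\cdots$ in the paper's sign convention. The value $0$ is attained by the constants, and the constants are exactly the eigenvector of $L_d^N$ at $0$.

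It then remains to show $\lambda_2(L_d^N)/N\to-\rho$, using the max--min characterization on the complement of $\mathbf 1$. I would prove the two inequalities separately.
\begin{itemize}
\item \textbf{Lower bound} $\liminf\lambda_2/N\ge-\rho$. If $M\ge2$, I test the Rayleigh quotient on the discretization of the $\kappa_2$-eigenfunction, projected orthogonally to $\mathbf 1$. Uniform convergence of the kernels gives a quotient converging to $-\kappa_2$. If $M=1$, I instead use a normalized bump supported near a point $x^\ast$ minimizing $d_W$, corrected to have mean zero. As the support shrinks, $\langle T_Wf,f\rangle\to0$, so the quotient tends to $-d_W(x^\ast)=-\delta_W$.
\item \textbf{Upper bound} $\limsup\lambda_2/N\le-\rho$. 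Suppose a mean-zero unit step vector $f_N$ had Rayleigh quotient at least $-\rho+\varepsilon$. Since $\langle -d_Wf,f\rangle\le-\delta_W$, the compact part must contribute $\langle T_Wf_N,f_N\rangle\ge\delta_W-\rho+\varepsilon$ up to errors. Passing to a weakly convergent subsequence $f_N\rightharpoonup f$, compactness of $T_W$ and norm convergence $T_N\to T_W$ give $\langle T_Wf_N,f_N\rangle\to\langle T_Wf,f\rangle$. Decomposing $f_N=f+g_N$ with $g_N\rightharpoonup0$, the part $g_N$ contributes at most $-\delta_W\|g_N\|^2$ asymptotically. The remaining contribution $\langle L_Wf,f\rangle$ is at most $-\kappa_2\|f\|^2$, because $f\perp\mathbf 1$ and the spectral theorem puts everything orthogonal to the kernel below $-\kappa_2$ (or below $-\delta_W$ when $M=1$). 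Summing these gives a contradiction.
\end{itemize}

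I expect the main obstacle to be the upper bound: preventing the discrete second eigenvalue from landing strictly inside the gap $(-\rho,0)$. This is where compactness of $T_W$ and the weak-limit splitting must be handled carefully, including the cross terms between $f$ and $g_N$ and the diagonal corrections from $D_N$ versus $d_W$.
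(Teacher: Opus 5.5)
Your proof is correct, and it takes a genuinely different route from the paper, which gives no argument at all and simply defers to \cite{vizuete2021laplacian}.

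Your argument is self-contained and uses only tools already present in the paper. First, the concentration bound of Lemma~2.6 of \cite{BramburgerHolzer2023} together with Weyl's inequality (as in Proposition~\ref{prop:eigenvalueaprox}) is upgraded to an almost sure statement by Borel--Cantelli. This is the same random/deterministic bridge the paper invokes right after the statement, run in the opposite direction. Second, you give a deterministic variational argument for $L_d^N/N$ via the step-function embedding. It rests on the operator-norm convergence $T_N-D_N\to T_W-d_W$, which follows from uniform continuity of $W$. Weyl's theorem on compact perturbations then places the essential spectrum at $[-\max d_W,-\delta_W]$. Your reading of the $\kappa_i$ as magnitudes, i.e. $-\kappa_i\in\sigma(T_W-d_W)$, is the only one under which the statement makes sense, since its sign conventions are inconsistent as written. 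What your route buys over the citation is a transparent proof in the paper's own sampling setting.

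You can simplify it considerably, because the step you flag as the main obstacle is not one. The spectral theorem and your identification of the essential spectrum give $\sup\{\langle L_Wf,f\rangle : f\perp\mathbf 1,\ \|f\|=1\}=-\rho$. In other words, $\langle L_Wf,f\rangle\le-\rho\|f\|^2$ holds for every $f\perp\mathbf 1$, step functions included. Combined with $\|T_N-D_N-L_W\|\to0$ on all of $L^2$, this yields $\lambda_2(L_d^N)/N\le-\rho+\|T_N-D_N-L_W\|$ directly. Your weak-limit splitting $f_N=f+g_N$ is valid, but it only re-derives this variational inequality and can be dropped.

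The lower bound likewise needs no case distinction between $M\ge2$ and $M=1$. Conditional expectation onto the uniform $N$-partition preserves the mean and converges strongly to the identity. Hence normalized mean-zero step functions approach the supremum $-\rho$, and the norm convergence transfers this to $\lambda_2(L_d^N)/N$. Your explicit test vectors are of course also fine. For $M=1$ the simplest choice is $x=e_i-\frac1N\mathbf 1$, which gives $\lambda_2(L_d^N)\ge-\deg(i)\,N/(N-1)$.
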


Let $W:[0,1]\times[0,1]\to[0,1]$ be a continuous graphon. Since  
Proposition~\ref{prop:eigenvalueaprox} holds for arbitrary graphons it follows that if 
$\lim_{N\to\infty}\frac{-\hat{\lambda}_2(L_r^N)}{N}=\rho$ a.s., then
\[
\lim_{N\to\infty}\frac{-\lambda_2(L_d^N)}{N}=\rho,
\]
where $L_d^N$ is the $N$-vertex deterministic Laplacian attached to $W$. 
The following is the main result of this section.

\begin{thm}
\label{thm:phasetransition}
Let $[0,1]=\bigsqcup_{i} I_i$ be a partition into intervals with Lebesgue 
measure $\mu(I_i)>0$. Let $N_k=kN\ge 2$ where $N$ is the LCM of the 
denominators of the lengths $\mu(I_i)$ and $k\ge 1$. Let $X$ be the set 
of $N_k$ total sampled points such that $m(I_i)=N_k\,\mu(I_i)$, where $m$ 
is the counting measure on $X$.

Let $W:[0,1]\times[0,1]\to[0,1]$ be a one-level hierarchical graphon. Let 
$L_d^k(W_i)$ be the deterministic Laplacian attached to the restricted 
graphon $W|_{I_i\times I_i}$ and the $N_k\mu(I_i)$ sampled points. Denote 
by $\rho_i:=\lim_{k\to\infty}\frac{-\lambda_2(L_d^k(W_i))}{N_k\mu(I_i)}$.
\begin{enumerate}
    \item If $w(h([0,1]))<\min_i\rho_i$, then for sufficiently large $k$ 
    the Fiedler matrix $E_F$ attached to $L_d^k(W)$ coincides with the 
    orthogonal projector onto $\mathcal{V}_{\mathrm{root}}$, and therefore 
    the random realization $\hat{E}_F$ satisfies the properties of 
    Theorem~\ref{thm:signstructureofspectralprojectors}.

    \item If $w(h([0,1]))>\min_i\rho_i$, then $E_F(x,y)=0$ whenever 
    $x\notin X\cap I_i$ or $y\notin X\cap I_i$ for some $i$.
\end{enumerate}
\end{thm}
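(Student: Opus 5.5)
The plan is to decompose $\mathbb{R}^{X}$ into subspaces that $L:=L_d^k(W)$ leaves invariant, and then compare the eigenvalues carried by each piece. Write $p:=w(h([0,1]))$ and $X_i:=X\cap I_i$. The first subspace is $\mathcal{V}_{\mathrm{root}}$. By Proposition~\ref{prop:eigenvaluequasiultragrpahon} it is an eigenspace with eigenvalue $-pN_k$. The second is the constants, which carry eigenvalue $0$. The remaining piece is the orthogonal complement $\mathcal{U}:=\bigoplus_i\{g:\ \operatorname{supp} g\subset X_i,\ \sum_{x\in X_i}g(x)=0\}$.

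The first step is to show that each summand of $\mathcal{U}$ is $L$-invariant and to compute how $L$ acts on it. Take $g$ supported in $X_i$ with zero sum. For $x\in X_i$, I split $(Lg)(x)$ into intra- and inter-community parts. The intra part equals $(L_d^k(W_i)g)(x)$. The inter part equals $-p\,(N_k-m(I_i))\,g(x)$, because the neighbours outside $I_i$ contribute $p(0-g(x))$ each. For $x\notin X_i$, I get $(Lg)(x)=p\sum_{y\in X_i}g(y)=0$. Hence
\[
L|_{\mathcal{U}_i}=L_d^k(W_i)|_{\mathbf{1}^\perp}-p\,(N_k-m(I_i))\,\mathrm{Id}.
\]
Its largest eigenvalue is $\lambda_2(L_d^k(W_i))-p(N_k-m(I_i))$, and the eigenvector is supported in $X_i$. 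So the spectrum of $L$ is $\{0\}\cup\{-pN_k\}\cup\bigcup_i\bigl(\sigma(L_d^k(W_i)|_{\mathbf{1}^\perp})-p(N_k-m(I_i))\bigr)$.

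The second step is the comparison. I normalize by $N_k$ and use $m(I_i)=N_k\mu(I_i)$. The top eigenvalue of block $i$ divided by $N_k$ tends to $-\mu(I_i)\rho_i-p(1-\mu(I_i))$. Subtracting $-p$ from this limit gives $\mu(I_i)(p-\rho_i)$. In case 1 we have $p<\min_i\rho_i$, so for every $i$ this quantity is eventually negative and bounded away from zero. Then $-pN_k$ is strictly the largest nonzero eigenvalue, with a gap of order $N_k$. So $E_F$ is exactly the orthogonal projector onto $\mathcal{V}_{\mathrm{root}}$. This projector is the one from Proposition~\ref{prop:eigenprojectorsultra} at the root, and it has the sign pattern required there. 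The gap of order $N_k$ supplies the $\delta>0$ needed in Proposition~\ref{prop:eigenvectoraproximation} and Corollary~\ref{cor:eigenvectorultrametricgraphon}. Theorem~\ref{thm:signstructureofspectralprojectors} then transfers the sign structure to $\hat E_F$ with high probability, since the entries of $E_F$ are $\Theta(1/N_k)$ uniformly. In case 2 there is an index $i$ with $\rho_i<p$. For that $i$, block $i$ eventually has an eigenvalue strictly above $-pN_k$. So $\lambda_2(L)$ is attained in $\bigcup_i\mathcal{U}_i$ and not in $\mathcal{V}_{\mathrm{root}}$. Every eigenvector for it lies in a sum of $\mathcal{U}_i$'s, so it is supported in $\bigcup_i X_i\times X_i$. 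This is the claimed vanishing of $E_F$ off the diagonal blocks.

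I expect the main obstacle to be justifying the limit of the normalized block eigenvalues and controlling ties. First, the definition of $\rho_i$ presumes that $\lim_k -\lambda_2(L_d^k(W_i))/(N_k\mu(I_i))$ exists. For that I would invoke Proposition~\ref{prop:limitspectralgapgraphon} on the rescaled graphon $W_i$, together with the remark after it that transfers the limit from random to deterministic Laplacians. Second, the conclusion in case 2 needs care when several indices attain $\min_i\rho_i$, or when $\lambda_2$ is shared among several blocks. In that situation $E_F$ is a sum of block projectors, and it is still supported on the union of the diagonal blocks. I would read the statement in that sense, which is the sense intended by the support claim in case 2.
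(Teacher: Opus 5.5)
Your proposal is correct and takes essentially the same route as the paper. Extending zero-sum eigenvectors of $L_d^k(W_i)$ by zero shifts their eigenvalues by $-p\,(N_k-m(I_i))$; together with $\mathcal{V}_{\mathrm{root}}$ and the constants this exhausts the spectrum; and your normalized comparison via the limit $\mu(I_i)(p-\rho_i)$ is exactly the paper's chain of inequalities against $-pN_k$.

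Your restriction to $\mathbf{1}^\perp$ also covers extra zero eigenvalues of a disconnected block, and your block-diagonal reading of case 2 under ties is more careful than the paper, which claims support in a single $I_i\times I_i$. Note, however, that the passage to $\hat E_F$ rests, as in the paper, on citing Theorem~\ref{thm:signstructureofspectralprojectors}, with your order-$N_k$ gap supplying $\delta$. It does not rest on the entries being $\Theta(1/N_k)$: a Frobenius bound of order $N_k^{\gamma-1/2}$ does not control individual entries of size $1/N_k$.
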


\begin{proof}
Let $f:X\cap I_i\to\mathbb{R}$ be an eigenfunction of $L_d^k(W_i)$ with 
eigenvalue $\lambda_f<0$. Since $\lambda_f<0$, the orthogonality 
$f\perp\mathbf{1}_{I_i}$ holds, that is, $\sum_{y\in I_i}f(y)m(y)=0$.

We extend $f$ to $X$ by setting $f(x)=0$ for $x\notin X\cap I_i$. For 
$x\in X\cap I_i$,
\begin{equation*}
\begin{split}
(L_d^k(W)f)(x)
&=\sum_{y\in X\cap I_i} W_i(x,y)\bigl(f(y)-f(x)\bigr)
+\sum_{j\neq i}\sum_{y\in X\cap I_j} w(h([0,1]))\bigl(f(y)-f(x)\bigr)\\
&=\sum_{y\in X\cap I_i} W_i(x,y)\bigl(f(y)-f(x)\bigr)
-f(x)\,w(h([0,1]))\,m(X\setminus X\cap I_i)\\
&=f(x)\bigl(\lambda_f - w(h([0,1]))\,m(X\setminus X\cap I_i)\bigr).
\end{split}
\end{equation*}
For $x\notin X\cap I_i$,
\[
(L_d^k(W)f)(x)=w(h([0,1]))\sum_{y\in I_i}f(y)m(y)=0.
\]
Therefore $f$ is an eigenfunction of $L_d^k(W)$ with eigenvalue 
$\lambda_f - w(h([0,1]))\,m(X\setminus X\cap I_i)$.

As shown above, $-w(h([0,1]))\,m(X)=-w(h([0,1]))\,N_k$ is an eigenvalue 
of $L_d^k(W)$ with multiplicity equal to the number of intervals minus one. 
Together with the eigenvalue $0$ attached to $\mathbf{1}=(1,\dots,1)$ and 
the displaced eigenvalues $\lambda_f - w(h([0,1]))\,m(X\setminus X\cap I_i)$ 
of the Laplacians $L_d^k(W_i)$, this gives a complete list of eigenvalues 
of $L_d^k(W)$.

Assume $w(h([0,1]))<\min_i\rho_i$. Then for sufficiently large $k$,
\[
w(h([0,1]))<\min_i\frac{-\lambda_2(L_d^k(W_i))}{N_k\mu(I_i)},
\]
and the following chain of inequalities holds for every $i$:
\begin{equation*}
\begin{split}
-w(h([0,1]))&>\frac{\lambda_2(L_d^k(W_i))}{N_k\mu(I_i)}\\
-w(h([0,1]))\,\mu(I_i)&>\frac{\lambda_2(L_d^k(W_i))}{N_k}\\
-w(h([0,1]))&>\frac{\lambda_2(L_d^k(W_i))}{N_k}-w(h([0,1]))(1-\mu(I_i))\\
-w(h([0,1]))\,N_k&>\lambda_2(L_d^k(W_i))-w(h([0,1]))(N_k-N_k\mu(I_i)).
\end{split}
\end{equation*}
By Proposition~\ref{prop:eigenvaluequasiultragrpahon}, this implies
\[
\lambda_2(L_d^k)
=\max_{\substack{x^\top x=1\\\mathbf{1}^\top x=0}}x^\top L_d^k\,x
=-w(h([0,1]))\,N_k,
\]
so $E_F$ coincides with the orthogonal projector onto $\mathcal{V}_{\mathrm{root}}$.

On the other hand, if $w(h([0,1]))>\min_i\rho_i$, then for sufficiently 
large $k$ there exists an index $i$ such that
\[
-w(h([0,1]))\,N_k<\lambda_2(L_d^k(W_i))-w(h([0,1]))(N_k-N_k\mu(I_i)),
\]
and
\[
\lambda_2(L_d^k)
=\max_{\substack{x^\top x=1\\\mathbf{1}^\top x=0}}x^\top L_d^k\,x
=\lambda_2(L_d^k(W_i))-w(h([0,1]))(N_k-N_k\mu(I_i)).
\]
Since the eigenvectors attached to this eigenvalue are all supported in 
$I_i^2$, the projector satisfies $E_F(x,y)=0$ whenever $x\notin X\cap I_i$ 
or $y\notin X\cap I_i$, as stated.
\end{proof}

We now discuss the meaning of this result. The value $w(h([0,1]))$ 
represents the density of connections between the graphs $G_i$. If 
$\rho^*:=\min_i\rho_i$, the conditions in 
Theorem~\ref{thm:phasetransition} determine how large the inter-community 
density must be compared to $\rho^*$ in order to have spectral 
detectability, that is, in order to:
\begin{enumerate}
    \item Detect the number of communities via the number of similar 
    eigenvalues: for a sufficiently large number of vertices, the sampled 
    graphs should have approximately as many normalized eigenvalues close 
    to $-w(h([0,1]))$ as there are communities minus one, that is, 
    $\frac{\hat{\lambda}_i}{N_k}\to -w(h([0,1]))$ for the corresponding 
    indices $i$.
    \item Identify the community structure via the sign pattern as 
    described in Theorem~\ref{thm:signstructureofspectralprojectors}.
\end{enumerate}
Moreover, many spectral algorithms such as $k$-means depend on selecting 
the first $k$ largest eigenvectors; as established, if the inter-community 
density is not small enough, those eigenvectors do not carry information 
about the community structure (Case~2, 
Theorem~\ref{thm:phasetransition}).

We extend the analysis of \cite{PhaseTransitionsSpectralCD} by 
interpreting the threshold $\rho^*$.

Recall that for an undirected graph $G=(V,E)$ and a subset $S\subset V$, 
we denote by $\delta(S)$ the set of edges with one endpoint in $S$ and the 
other in $V\setminus S$. The \emph{edge conductance} of $S$ is defined as
\[
\phi(S)\;:=\;\frac{|\delta(S)|}{\mathrm{vol}(S)},
\qquad
\mathrm{vol}(S):=\sum_{v\in S}\deg(v),
\]
and the conductance of the graph is
\[
\phi(G)\;:=\;\min_{S:\,\mathrm{vol}(S)\le |E|}\phi(S).
\]
The quantity $\phi(G)$ measures the presence of bottlenecks in the graph, 
that is, subsets of vertices whose boundary is small relative to their 
total degree. Let $\lambda_2(L)$ denote the largest nonzero eigenvalue of 
$L=A-D$. Cheeger's inequality gives
\[
\frac{d_{\min}}{2}\,\phi(G)^2
\;\le\;
|\lambda_2(L)|
\;\le\;
2\,d_{\max}\,\phi(G).
\]In particular, the magnitude of the largest nonzero eigenvalue of $L$ is 
controlled by the conductance and the degree: small conductance corresponds 
to a small spectral gap, reflecting the existence of a bottleneck. On the 
other hand, small $d_{\max}$, that is, low local edge density, also 
corresponds to a small spectral gap.

Since
\[
\rho_i=\lim_{k\to\infty}\frac{-\lambda_2(L_d^k(W_i))}{n_i},
\]
where $n_i=N_k\mu(I_i)$ is the number of vertices of the subgraph $G_i$, 
the threshold admits a clear interpretation in terms of the structural 
connectivity of the sub-networks.

Indeed, Cheeger-type inequalities relate the magnitude of the second 
eigenvalue of the Laplacian to the conductance and the degree distribution 
of the graph. After normalization by the number of vertices, $\rho_i$ 
quantifies an effective spectral gap per vertex, which is sensitive both 
to the presence of bottlenecks (small conductance) and to the absence of 
highly connected vertices (bounded local degree).

Small values of $\rho_i$ indicate weak global cohesion, either due to 
narrow cuts separating large portions of the sub-network or due to locally 
sparse connectivity, whereas larger values of $\rho_i$ correspond to 
graphs whose connectivity remains robust in the large-size limit.

Therefore, the effectiveness of spectral detectability relies on the 
contrast between how strongly each community is internally connected and 
the inter-community edge density: once the latter exceeds $\min_i\rho_i$, 
the spectral community structure is no longer detectable.
\section{Random walks on hierarchical community networks.}

Laplacian dynamics on graphs are a well-studied object in network science, 
with applications ranging from diffusion processes and centrality measures 
to epidemic spreading \cite{MasudaPorterLambiotte2017, Lambiotte2015}. 
Moreover, this dynamics has been studied in the context of community 
detection via the Markov stability framework \cite{Lambiotte2015, Schaub2023}. 
Motivated by this, we study dynamical variables of the continuous-time 
Markov chain attached to an ultrametric graphon and analyze how the 
hierarchical community structure determines their large-graph behavior. 
To this end, we construct a limiting pseudo-inverse Laplacian operator 
and establish its almost sure convergence; we remark that the convergence 
of pseudo-inverse Laplacians is of independent mathematical interest, 
beyond its role in the analysis of the dynamical variables attached to the Laplacian dynamic. 
The main results show that the commute times and hitting times associated 
to this dynamics lose all information about the hierarchical community 
structure in the large-graph limit.

Let $G=(V,E)$ be an undirected graph with $n$ vertices. Let 
$A=(A_{ij})_{i,j=1}^{n}$ be the adjacency matrix and let 
$d_i:=\sum_{j=1}^n A_{ij}$ be the degree of vertex $v_i$. Let $D$ denote 
the degree matrix with diagonal entries $d_{ii}=d_i$.

A natural continuous-time Markov chain (CTMC) on $G$, also called 
Laplacian dynamics or diffusion on $G$, is defined by the combinatorial 
Laplacian $L=A-D$ as generator: the transition probability from vertex $i$ 
to vertex $j$ at time $t\geq 0$ is given by
\[
P_{ij}(t) = \bigl(e^{tL}\bigr)_{ij}.
\]
The mean first passage time (MFPT) $m_{ik}$ is defined as the expected 
time for the CTMC starting at vertex $i$ to reach vertex $k$, with 
$m_{ii}=0$ by convention. The commute time between vertices $i$ and $j$ 
is defined as
\[
C_{ij} := m_{ij} + m_{ij}.
\]
The MFPT and commute times are closely related to $L$ via its 
Moore--Penrose pseudo-inverse. The pseudo-inverse of $L$ with eigenvalues 
$0=\lambda_1\geq\lambda_2\geq\dots\geq\lambda_{n}$ is defined by
\[
L^{+}=V\Lambda^{+}V^{\top},
\qquad
\Lambda^{+}=\mathrm{diag}\!\left(0,-\frac{1}{\lambda_2},\dots,
-\frac{1}{\lambda_{n}}\right),
\]
where the columns of $V$ form an orthonormal eigenbasis of $L$. The 
hitting times satisfy \cite{vanKampen2007}
\[
m(i|k) = \frac{L^+_{kk} - L^+_{ik}}{\pi_k},
\]
where $\pi_k = 1/|V|$ is the stationary distribution of the CTMC. 
The commute time is therefore
\[
C(i,j) = m_{ij} + m_{ji}.
\]

Let $W:[0,1]^2\to[0,1]$ be an ultrametric graphon. Denote 
by $(L_d^k)^+$ the pseudo-inverse of the deterministic Laplacian $L_d^k$ 
and by $(L_r^k)^+$ the pseudo-inverse of its random realization $L_r^k$.

Throughout this section we make the following assumption.
\vspace{5pt}
\noindent\textbf{Hypothesis 1.} The probability of internal edge connection 
in the communities of the last level $M$ is $1$. That is, $W(x,y)=w(d(x,y))=1$ 
for $x,y\in I_i^{(M)}$, where $I_i^{(M)}$ is any interval at the last 
level.
\vspace{5pt}

We now explain the motivation behind this assumption. In hierarchical 
assortative communities, nodes within the same community are more likely 
to be connected, so the deepest communities in the hierarchy tend to be 
the most densely connected. As noted in Example~\ref{example:graphonultra}, 
if the tree is deep enough or $w$ increases quickly enough as 
$d(\cdot,\cdot)\to 0$, the edge probability at the deepest level $M$ is 
approximately $1$. Hypothesis~1 isolates the effect of inter-community 
bridges, which is our main object of interest in the ultrametric 
hierarchical regime for random walks. Moreover, this hypothesis reduces 
the complexity of the statistical analysis. The extension to dense random 
internal structure in the deepest communities can be treated using general 
graphons and is left for future work.

We construct a limiting object for the sequence of pseudo-inverse 
Laplacians $(L_d^k)^+$. To this end, we recall the standard identification 
between matrices and Hilbert--Schmidt operators. Let $A=(A_{ij})_{i,j=1}^{N}$ 
be an $N\times N$ matrix and let $\{I_i\}_{i=1}^N$ be the uniform partition 
of $[0,1]$ into intervals of length $1/N$. The matrix $A$ can be identified 
with an integral operator
\[
L^2([0,1])\ni f\mapsto T(A)f(x)=\int_0^{1}A(x,y)f(y)\,dy,
\]
where $A(x,y)=A_{ij}$ if and only if $x\in I_i$ and $y\in I_j$. For a 
kernel $K\in L^2([0,1]\times[0,1])$ it is known that
\[
\|T(K)\|_{HS}=\|K\|_{L^2},
\]
that is, the Hilbert--Schmidt norm of the integral operator equals the 
$L^2$ norm of its kernel. Therefore,
\[
\|T(A)\|_{HS}^2=\|A\|_{L^2}^2=\int_0^1\int_0^1|A(x,y)|^2\,dx\,dy
=\frac{1}{N^2}\sum_{i,j}|A_{ij}|^2=\frac{1}{N^2}\|A\|_F^2,
\]
and hence $\|T(A)\|_{HS}=\frac{1}{N}\|A\|_F$. The entries of $A$ can be 
recovered from the operator via the $L^2([0,1])$ inner product:
\[
\langle\sqrt{N}\,\mathbf{1}_{I_i},\,T(A)\sqrt{N}\,\mathbf{1}_{I_j}
\rangle_{L^2}=\frac{1}{N}A_{ij}.
\]
Conversely, given a kernel $A\in L^2([0,1]\times[0,1])$, define the matrix 
$A_N=((A_N)_{ij})_{i,j=1}^{N}$ by
\[
(A_N)_{ij}=\frac{1}{|I_i||I_j|}\int_{I_i\times I_j}A(x,y)\,dx\,dy.
\]
The matrix $A_N$ has associated kernel $A_N(x,y)=(A_N)_{ij}$ for 
$x\in I_i$, $y\in I_j$, and it is known that $A_N(x,y)\to A(x,y)$ in 
$L^2$. Therefore,
\[
\|T(A)-T(A_N)\|_{HS}=\|A-A_N\|_{L^2}\to 0
\]
as $N\to\infty$. The operator $T(A_N)$ is referred to as the 
discretization of $T(A)$ at level $N$.

\begin{prop}\label{prop:operatorprojectorconvergence}
Let $W:[0,1]^2\to[0,1]$ be an ultrametric graphon and let $L_d^k$ be the 
attached deterministic ultrametric Laplacian. If $E_I^k$ is the spectral 
projector of $L_d^k$ attached to an internal node $I$ at level 
$\ell<M$, then there exists 
$E_I\in L^2([0,1]\times[0,1])$ such that
\[
\|T(N_k\,E_I^k)-T(E_I)\|_{HS}\to 0
\]
as $k\to\infty$.
\end{prop}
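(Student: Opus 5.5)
The plan is to compute $N_k E_I^k$ explicitly from Proposition~\ref{prop:eigenprojectorsultra} and observe that, under the sampling scheme adopted in this section, its step-function kernel does not depend on $k$. The $L^2$-limit $E_I$ can then be taken equal to that kernel, and the Hilbert--Schmidt distance is in fact identically zero for every $k$.

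\textbf{Step 1: the projector for the counting measure.} Since $m$ is the counting measure, the $L^2(m)$ inner product is the canonical one. Hence the projection kernel of Proposition~\ref{prop:eigenprojectorsultra} is exactly the matrix of the orthogonal projector onto $\mathcal{V}_I$.

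I will interpret $E_I^k$ as this projector onto $\mathcal{V}_I$, the span of the $\varphi_{I,j}$. This matters because if $\lambda(I)$ happens to coincide numerically with the eigenvalue of another node, the full eigenprojector would be a sum over those nodes. That sum would be handled node by node in the same way.

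With this interpretation, for sample points $x_a,x_b$ the entries are as follows:
\begin{itemize}
\item $(E_I^k)_{ab}=\frac{1}{m(I_j)}-\frac{1}{m(I)}$ if $x_a,x_b$ lie in the same child $I_j\in C(I)$;
\item $(E_I^k)_{ab}=-\frac{1}{m(I)}$ if $x_a,x_b\in I$ lie in different children;
\item $(E_I^k)_{ab}=0$ otherwise.
\end{itemize}
This matches Corollary~\ref{cor:eigenvectorultrametricgraphon}. I will check symmetry and idempotence briefly to confirm the normalization.

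\textbf{Step 2: scaling.} By the sampling construction, $m(I_j)=N_k\mu(I_j)$ and $m(I)=N_k\mu(I)$; this uses the hypothesis $\ell<M$, so that the children of $I$ are intervals of the partition family. It follows that
\[
N_k (E_I^k)_{ab}=\mathbf{1}_{\{x_a,x_b\in I\}}\Bigl(\sum_{I_j\in C(I)}\frac{\mathbf{1}_{I_j}(x_a)\mathbf{1}_{I_j}(x_b)}{\mu(I_j)}-\frac{1}{\mu(I)}\Bigr).
\]

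\textbf{Step 3: definition of the limit and conclusion.} Define
\[
E_I(x,y):=\mathbf{1}_I(x)\mathbf{1}_I(y)\Bigl(\sum_{I_j\in C(I)}\frac{\mathbf{1}_{I_j}(x)\mathbf{1}_{I_j}(y)}{\mu(I_j)}-\frac{1}{\mu(I)}\Bigr).
\]
This is a bounded step function, hence lies in $L^2([0,1]^2)$.

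In the identification $A\mapsto T(A)$, the entry $(a,b)$ is spread over the cell $I_M^{(k,a)}\times I_M^{(k,b)}$. Each such sub-interval lies entirely inside a single interval of every level, because it is contained in the level-$M$ interval from which $x_a$ is sampled. Therefore the step kernel of $N_kE_I^k$ coincides with $E_I$ everywhere, up to measure-zero endpoints. This gives $\|T(N_kE_I^k)-T(E_I)\|_{HS}=\|N_kE_I^k-E_I\|_{L^2}=0$ for all $k$, which proves the claim.

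\textbf{Main obstacle.} The only delicate point is Step 1: making sure the normalization of $E_I^k$ as a symmetric orthogonal projector matches the kernel formula, given that the paper writes the kernel in a measure-weighted form. It is also necessary to fix the convention for repeated eigenvalues. I would verify Step 1 directly by checking that $E_I^k\varphi_{I,j}=\varphi_{I,j}$ and that $E_I^k$ annihilates $\mathcal{V}_I^\perp$.
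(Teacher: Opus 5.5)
Your proposal is correct and follows essentially the same route as the paper. Both arguments write down the explicit projector entries $\frac{1}{m(J)}-\frac{1}{m(I)}$ and $-\frac{1}{m(I)}$, use $m=N_k\mu$ on partition intervals, and identify the step kernel of $N_kE_I^k$ with the $k$-independent kernel $E_I$; the paper then appeals to the general $L^2$ convergence of discretizations, whereas you observe directly that $\|T(N_kE_I^k)-T(E_I)\|_{HS}=0$ for every $k$, which is the sharper conclusion.
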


\begin{proof}
Define $E_I:[0,1]^2\to\mathbb{R}$ by
\[
E_I(x,y)=\begin{cases}
-\dfrac{1}{\mu(I)}, & \text{if $x$ and $y$ are in different children of $I$,}\\[6pt]
\dfrac{1}{\mu(J)}-\dfrac{1}{\mu(I)}, & \text{if $x,y\in J$ for some child $J\in C(I)$.}
\end{cases}
\]
Given two intervals $I_i^{(M)}$ and $I_j^{(M)}$ at the last level, the 
average of $E_I(x,y)$ satisfies
\[
\frac{1}{N_k^2}\int_{I_i^{(M)}\times I_j^{(M)}}E_I(x,y)\,dx\,dy
=E_I(x_i,y_j)
\]
for any $x_i\in I_i^{(M)}$ and $y_j\in I_j^{(M)}$. The matrix $N_k E_I^k$ 
has associated integral kernel $N_k E_I^k(x,y)=(N_k E_I^k)_{ij}$ for 
$x\in I_i^{(M)}$ and $y\in I_j^{(M)}$. This kernel satisfies
\[
E_I^k(x,y)=\begin{cases}
-\dfrac{1}{m(I)}, & \text{if $x$ and $y$ are in different children of $I$,}\\[6pt]
\dfrac{1}{m(J)}-\dfrac{1}{m(I)}, & \text{if $x,y\in J$ for some child $J\in C(I)$,}
\end{cases}
\]
and consequently $N_k E_I^k(x,y)=E_I(x,y)$ since $\mu=\frac{m}{N_k}$. 
Therefore $T(N_k E_I^k)$ is the discretization of $T(E_I)$ at level $N_k$, 
and $\|T(N_k E_I^k)-T(E_I)\|_{HS}\to 0$ as $k\to\infty$.
\end{proof}

The following theorem relates the random spectral projectors of $L_r^k$ 
with the limiting operators $T(E_I)$.

\begin{thm}\label{thm:randomspectralconvergence}
Let $W:[0,1]^2\to[0,1]$ be an ultrametric graphon and let $M$ denote the 
last level of the attached discretizations. Let $L_r^k$ be the associated 
random ultrametric Laplacian. Let $\hat{v}_1,\dots,\hat{v}_m$ be 
eigenvectors whose eigenvalues $\hat{\lambda}_{n_1}(k),\dots,
\hat{\lambda}_{n_m}(k)$ form the spectral cluster associated with the 
eigenvalue $\lambda(I)$ of multiplicity $m$ of $L_d^k$ where the level of \(I\) satisfies 
$\ell<M$. Let 
$\hat{V}=[\hat{v}_1,\dots,\hat{v}_m]$ and $\hat{E}_I^k:=\hat{V}\hat{V}^\top$. 
Then
\[
\|T(N_k\,\hat{E}_I^k)-T(E_I)\|_{HS}\to 0\quad\text{a.s.}
\]
as $k\to\infty$.
\end{thm}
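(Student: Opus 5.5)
The plan is to split with the triangle inequality,
\[
\|T(N_k\hat{E}_I^k)-T(E_I)\|_{HS}\le \|T(N_k\hat{E}_I^k)-T(N_kE_I^k)\|_{HS}+\|T(N_kE_I^k)-T(E_I)\|_{HS}.
\]
The second term tends to zero deterministically by Proposition~\ref{prop:operatorprojectorconvergence}; in fact $T(N_kE_I^k)=T(E_I)$ exactly, since $E_I$ is constant on blocks of the last level. So everything reduces to the random term. By the identification $\|T(A)\|_{HS}=\frac{1}{N_k}\|A\|_F$, this term equals
\[
\frac{1}{N_k}\,N_k\,\|\hat{E}_I^k-E_I^k\|_F=\|\hat{V}\hat{V}^\top-VV^\top\|_F .
\]
Here $VV^\top=E_I^k$ is the orthogonal projector onto the deterministic eigenspace attached to $\lambda(I)$. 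One should check that the kernel of Proposition~\ref{prop:eigenprojectorsultra} with counting measure is indeed the orthogonal projector, which holds because $m$ is uniform.

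Next I would apply Corollary~\ref{cor:eigenvectorultrametricgraphon} (equivalently Proposition~\ref{prop:eigenvectoraproximation}). This needs a gap $\delta>0$, independent of $k$, such that the normalized eigenvalue $\lambda(I)/N_k$ is separated from all other distinct normalized eigenvalues of $L_d^k$ by at least $\delta$.

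The first ingredient is a uniform separation of the normalized spectrum. By the explicit formula, $\lambda(J)/N_k=-\sum_{J_m\in\gamma_r(J)}\mu(J_m)\bigl(w(h(J_m))-w(h(J_{F(m)}))\bigr)$ does not depend on $k$ for every internal node $J$ at level $<M$. The remaining eigenvalues come from the last-level intervals, where under Hypothesis~1 the within-block edges are deterministic. These eigenvalues are also $k$-independent after normalization, except for terms of order $1/N_k$ from the diagonal convention. So the finite set of distinct normalized values is essentially fixed, and $\delta$ can be taken as half the minimal distance from $\lambda(I)/N_k$ to the other values, for $k$ large.

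I expect the main obstacle to be the case where $\lambda(I)$ coincides with the eigenvalue of another node $J$, as happens for the isomorphic clusters $A$ and $B$ in the example. In that case the "spectral cluster" of $\hat{L}$ merges the two eigenspaces, so $\hat{E}_I^k$ is not well defined from eigenvalues alone. I would handle this by interpreting the statement under the standing convention that the cluster is isolated, i.e.\ by assuming $\lambda(I)$ is distinct. In a remark I would note that otherwise one obtains convergence of the projector onto the combined eigenspace, to $T(E_I+E_J)$.

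Finally, Corollary~\ref{cor:eigenvectorultrametricgraphon} gives
\[
\|\hat{V}\hat{V}^\top-VV^\top\|_F\le \frac{2\sqrt2\,m}{\delta}N_k^{\gamma-1/2}
\]
outside an event of probability at most $2N_ke^{-CN_k^{2\gamma}}$. Since $N_k=kN$, fixing any $\gamma\in(0,\tfrac12)$ makes $\sum_k 2kNe^{-C(kN)^{2\gamma}}<\infty$. By Borel--Cantelli, almost surely the bound holds for all but finitely many $k$. As $N_k^{\gamma-1/2}\to0$, the random term tends to $0$ almost surely, and combining this with the deterministic term completes the argument.
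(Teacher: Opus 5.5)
Your proposal is correct and follows the paper's proof: the same triangle-inequality split, the identity $\|T(A)\|_{HS}=\frac{1}{N_k}\|A\|_F$ reducing the random term to $\|\hat V\hat V^\top-VV^\top\|_F$, the Davis--Kahan bound of Proposition~\ref{prop:eigenvectoraproximation}, and Proposition~\ref{prop:operatorprojectorconvergence} for the deterministic term, which, as you note, vanishes identically. The paper leaves several of your points implicit: the Borel--Cantelli step, the $k$-independence of $\delta$ (which holds exactly and without Hypothesis~1, since every normalized deterministic eigenvalue is $-\nu(J)$ for a fixed finite set of nodes $J$), and the requirement that $\lambda(I)$ not coincide with another node's eigenvalue, so your version is a more careful rendering of the same argument rather than a different one.
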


\begin{proof}
The triangle inequality gives
\[
\|T(N_k\hat{E}_I^k)-T(E_I)\|_{HS}
\leq\|T(N_k\hat{E}_I^k)-T(N_k E_I^k)\|_{HS}
+\|T(N_k E_I^k)-T(E_I)\|_{HS}.
\]
By the relation between the Hilbert--Schmidt and Frobenius norms,
\[
\|T(N_k\hat{E}_I^k)-T(N_k E_I^k)\|_{HS}
=\frac{1}{N_k}\|N_k\hat{E}_I^k-N_k E_I^k\|_F.
\]
By Proposition~\ref{prop:eigenvectoraproximation}, there exists $\delta>0$ 
and, for any $\gamma\in(0,\frac{1}{2})$, a constant $C=C(\gamma)$ such that
\[
\|T(N_k\hat{E}_I^k)-T(N_k E_I^k)\|_{HS}
\leq\frac{2\sqrt{2}\,m}{\delta}N_k^{\gamma-\frac{1}{2}},
\]
with probability at least $1-2N_ke^{-CN_k^{2\gamma}}$. Together with 
$\|T(N_k E_I^k)-T(E_I)\|_{HS}\to 0$, this gives the desired result.
\end{proof}

It is important to emphasize that $\frac{2\sqrt{2}\,m}{\delta}
N_k^{\gamma-\frac{1}{2}}\to 0$ as $k\to\infty$, since the multiplicity 
$m$ does not depend on $k$: the node $I$ is assumed to be at level 
$\ell<M$. By Hypothesis~1, the graphon is fully connected in the deepest 
communities. Hence, the spectral projector attached to the eigenvalue 
$\lambda(I_i^{(M)})$ with multiplicity $N_k\mu(I_i^{(M)})$ of both 
$L_d^k$ and $L_r^k$ is
\[
E^k_{I_i^{(M)}}(x,y)=\delta_{xy}-\frac{1}{m_{i,M}},
\]
where $m_{i,M}:=m(I_i^{(M)})$. In matrix form,
\[
E^k_{I_i^{(M)}}=I_n-\frac{1}{m_{i,M}}\mathbf{1}_{I_i^{(M)}}
\mathbf{1}_{I_i^{(M)}}^\top,
\]
and for a vector $v\in\mathbb{R}^{m_{i,M}}$,
\[
(E^k_{I_i^{(M)}}v)_a=v_a-\frac{1}{m_{i,M}}\sum_{b=1}^{m_{i,M}}v_b.
\]
In $L^2([0,1])$, the orthogonal projector onto 
$\mathcal{V}:=\{f\in L^2(I_i^{(M)}):\int_{I_i^{(M)}}f(x)\,dx=0\}$ is
\[
P_{I_i^{(M)}}f(x)=f(x)-\int_{I_i^{(M)}}f(x)\,dx.
\]
Each sampled point of $I_i^{(M)}$ is identified with a sub-interval 
$I_{a,i}^{(M)}\subset I_i^{(M)}$ of length $\frac{1}{N_k}$. Define 
$\mathcal{X}_{i,M}\subset L^2([0,1])$ as the finite-dimensional space 
spanned by the indicator functions $\sqrt{m_{i,M}}\,\mathbf{1}_{I_{a,i}^{(M)}}$; 
this space is isometric to $\mathbb{R}^{m_{i,M}}$. For 
$f\in\mathcal{X}_{i,M}$,
\[
f=\sqrt{m_{i,M}}\sum_b v_b\,\mathbf{1}_{I_{b,i}^{(M)}}
\]
and
\[
P_{I_i^{(M)}}f(x)=\sqrt{m_{i,M}}\,v_a
-\frac{\sqrt{m_{i,M}}}{m_{i,M}}\sum_b v_b.
\]
Since $\mathcal{X}_{i,M}$ is isometric to $\mathbb{R}^{m_{i,M}}$ via 
$U_{i,M}:\mathbb{R}^{m_{i,M}}\to\mathcal{X}_{i,M}$, where 
$U_{i,M}v(x)=\sqrt{m_{i,M}}\,\mathbf{1}_{I_{a,i}^{(M)}}(x)$, we conclude
\[
P_{I_i^{(M)}}\big|_{\mathcal{X}_{i,M}}=U_{i,M}E^k_{I_i^{(M)}}U_{i,M}^{-1}.
\]
In particular, the entries of $E^k_{I_i^{(M)}}$ are recovered via the 
inner product:
\[
\langle\sqrt{m_{i,M}}\,\mathbf{1}_{I_{a,i}^{(M)}},\,
P_{I_i^{(M)}}\sqrt{m_{i,M}}\,\mathbf{1}_{I_{b,i}^{(M)}}\rangle_{L^2}
=\langle e_a,E^k_{I_i^{(M)}}e_b\rangle_{\mathbb{R}^{m_{i,M}}}
=(E^k_{I_i^{(M)}})_{ab}.
\]
For a given Laplacian $L$, its pseudo-inverse is constructed via its 
eigenvalue decomposition. In order to construct a limiting object, it is 
therefore important to analyze the behavior of the inverses of the 
eigenvalues, which we address next.

\begin{prop}
\label{prop:inverseeigenvalueconvergenve}
Let $N_k\geq 2$ and let $W$ be an ultrametric graphon. Consider its
attached random Laplacian $L_r^{k}$ with eigenvalues
$0=\hat{\lambda}_1\geq\hat{\lambda}_2\geq \dots \geq \hat{\lambda}_n$.
For all $\gamma \in (0,\frac{1}{2})$, there is a constant $C=C(\gamma)$
and $c_0>0$, independent of $k$, such that
\[
\left|
\frac{N_k}{|\hat{\lambda}_{i}|}-\left(\sum_{I_m\in \gamma_r(I_n)}
\mu(I_m)\Bigl(w(h(I_m))-w(h(I_{F(m)}))\Bigr)\right)^{-1}
\right|
\le
\frac{2}{c_0^{2}}\,N_k^{\gamma-\frac{1}{2}},
\]
with probability at least $1-2N_ke^{-CN_k^{2\gamma}}$, for some
interval $I_n$.
\end{prop}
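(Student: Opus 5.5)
The bound follows from Corollary~\ref{cor:eigenvalueultrametricgraphon} combined with the elementary identity
\[
\left|\frac1a-\frac1b\right|=\frac{|a-b|}{ab}
\]
for positive reals. Write
\[
a=\frac{|\hat\lambda_i|}{N_k},\qquad b=\sum_{I_m\in\gamma_r(I_n)}\mu(I_m)\bigl(w(h(I_m))-w(h(I_{F(m)}))\bigr).
\]
Here $I_n$ is the interval whose deterministic eigenvalue $\lambda_i=\lambda(I_n)$ is paired with $\hat\lambda_i$ by the ordering in Proposition~\ref{prop:eigenvalueaprox}, so that $b=|\lambda_i|/N_k$. With this notation the claim reduces to bounding $|a-b|/(ab)$.

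\textbf{Step 1: uniform lower bound on $b$.} The nonzero deterministic normalized eigenvalues $|\lambda(I_n)|/N_k$ do not depend on $k$. They are given by the closed formula of Proposition~\ref{prop:eigenvalueofultrametricoperator}, and only finitely many intervals $I_n$ occur, since the tree of internal nodes is independent of $k$. Rewriting the second form of that formula,
\[
b=\frac{1}{N_k}\Bigl(\sum_{y\notin I_n}w(d(x_0,y))+w(h(I_n))\,m(I_n)\Bigr)\ \ge\ \min_{\ell}w(h(I_\ell))\,\mu(I_n).
\]
This is strictly positive because $w>0$. Set
\[
c_0:=\min_{I_n} b(I_n)>0,
\]
the minimum over the finitely many intervals $I_n$; $c_0$ is independent of $k$.

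For the last-level intervals, whose eigenvalue has multiplicity growing with $k$, the same formula still gives a $k$-independent positive value. This matters because the pairing index $i$ ranges over all $N_k$ eigenvalues, and in particular over these.

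\textbf{Step 2: lower bound on $a$.} On the event of Corollary~\ref{cor:eigenvalueultrametricgraphon}, which has probability at least $1-2N_ke^{-CN_k^{2\gamma}}$, we have
\[
|a-b|\le N_k^{\gamma-1/2}.
\]
Hence $a\ge c_0-N_k^{\gamma-1/2}\ge c_0/2$ as soon as $N_k^{\gamma-1/2}\le c_0/2$.

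\textbf{Step 3: conclusion.} On the same event, for those $k$,
\[
\left|\frac1a-\frac1b\right|\le\frac{N_k^{\gamma-1/2}}{(c_0/2)\,c_0}=\frac{2}{c_0^2}N_k^{\gamma-1/2}.
\]

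\textbf{Main obstacle: the finitely many small $k$.} For $k$ with $N_k^{\gamma-1/2}>c_0/2$, Step 2 does not apply, and $\hat\lambda_i$ could even be $0$ if the random graph is disconnected. Since the statement only asks for probability at least $1-2N_ke^{-CN_k^{2\gamma}}$, I would handle this by shrinking the constant $C=C(\gamma)$. The aim is to make $2N_ke^{-CN_k^{2\gamma}}\ge1$ for those finitely many $N_k$, which makes the probability claim vacuous there. This is possible because that set of $N_k$ is bounded independently of everything except $\gamma$ and $c_0$.

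A secondary point to state explicitly is the pairing convention: $\hat\lambda_i$ is matched with $\lambda_i$ by order, exactly as in Proposition~\ref{prop:eigenvalueaprox}, and $I_n$ is defined as the interval with $\lambda_i=\lambda(I_n)$. Also, for $i\ge2$, Step 2 itself guarantees $\hat\lambda_i\neq0$ on the good event, so $1/a$ is well defined there.
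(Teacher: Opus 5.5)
Your proposal is correct and follows the paper's proof. Both reduce the claim to $|1/a-1/b|=|a-b|/(ab)$, use the bound $|\hat\lambda_i-\lambda(I_n)|\le N_k^{\gamma+1/2}$ from Proposition~\ref{prop:eigenvalueaprox}, and control the denominator via $|\lambda(I_n)|\ge c_0N_k$ and $|\hat\lambda_i|\ge \tfrac{c_0}{2}N_k$. You are more careful than the paper, which simply posits $c_0$ and the lower bound on $|\hat\lambda_i|$ ``for all sufficiently large $k$''. You instead justify $c_0>0$ from the $k$-independence and positivity of the normalized deterministic eigenvalues, and derive the bound on $|\hat\lambda_i|$ on the good event. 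You also absorb the finitely many small $N_k$ by shrinking $C$, so the statement holds for all $N_k\ge 2$ as written.
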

\begin{proof}
By Proposition~\ref{prop:eigenvalueaprox}, for every
$\gamma\in(0,\tfrac{1}{2})$, there exists a constant $C$ independent
of $k$ such that
\[
\left|
\frac{\lambda(I_n,k)}{N_k}-\frac{\hat{\lambda}_{n_i}(k)}{N_k}
\right|
\le
N_k^{\gamma-\frac{1}{2}},
\]
with probability at least $1-2N_ke^{-C_1 N_k^{2\gamma}}$, where
\[
\frac{\lambda(I_n,k)}{N_k}=\sum_{I_m\in \gamma_r(I_n)}
\mu(I_m)\Bigl(w(h(I_m))-w(h(I_{F(m)}))\Bigr).
\]
This implies
\[
|\hat{\lambda}_{n_i}(k)-\lambda(I_n,k)|
\le
N_k^{\gamma+\frac{1}{2}}.
\]
Moreover, let $c_0>0$ be a constant satisfying
\[
|\lambda(I_n,k)|\ge c_0 N_k,
\qquad
|\hat{\lambda}_{n_i}(k)|\ge \tfrac{c_0}{2}N_k
\]
for all sufficiently large $k$. Consequently,
\[
\left|
\frac{1}{\hat{\lambda}_{n_i}(k)}-\frac{1}{\lambda(I_n,k)}
\right|
=
\frac{|\hat{\lambda}_{n_i}(k)-\lambda(I_n,k)|}
{|\hat{\lambda}_{n_i}(k)\,\lambda(I_n,k)|}
\le
\frac{2}{c_0^{2}}\,N_k^{\gamma-\frac{3}{2}}.
\]
After multiplying both sides by $N_k>0$ we obtain the result.
\end{proof}

For brevity we denote
\[
\nu(I):=\sum_{I_m\in \gamma_r(I)}
\mu(I_m)\Bigl(w(h(I_m))-w(h(I_{F(m)}))\Bigr).
\]
\begin{definition}
Let $W$ be an ultrametric graphon satisfying Hypothesis~1. Let $T(E_I)$
be the integral operator defined in
Proposition~\ref{prop:operatorprojectorconvergence}, for an internal
node $I$ with level $\ell<N$. And let
$P_{I_i^{(N)}}f(x)=f(x)\mathbf{1}_{I_i^{(N)}}-\int_{I_i^{(N)}}f(x)\,dx$.
The \emph{pseudo-inverse Laplacian operator}
$L_W^{+}:L^2([0,1])\rightarrow L^2([0,1])$ is defined as
\[
L^2([0,1])\ni f\mapsto L_W^{+}f(x)
=\sum_{i}\frac{1}{\nu(I_i^{(N)})}P_{I_i^{(N)}}f(x)
+\sum_{\ell<N}\frac{1}{\nu(I_{\ell})}T(E_{I_{\ell}})f(x),
\]
where the first sum runs over the indices of the intervals at the last
level, and the second sum runs over all internal nodes of level
$\ell<N$.
\end{definition}

The relationship between $L_W^+$ and $L_d^k$ is direct. Every sampled
point $x_i\in [0,1]$ can be identified with a sub-interval of one of
the intervals at the last level. This interval has length
$\frac{1}{N_k}$ and will be denoted by $I(x_i,k)$. Let
$\mathcal{X}_{N}$ be the space generated by the functions
$\sqrt{N_k}\,\mathbf{1}_{I(x_i,k)}(x)$. This space, as noted before,
is isometric to $\mathbb{R}^{N_k}$. Since $T(N_k\,E_{I_{\ell}}^k)$ is
the discretization of $T(E_{I_{\ell}})$ at level $N_k$, the following
holds:
\begin{equation*}
\begin{split}
\bigl\langle\sqrt{N_k}\,\mathbf{1}_{I(x_i,k)},\,
L_W^{+}\sqrt{N_k}\,\mathbf{1}_{I(x_j,k)}\bigr\rangle_{L^2}
&=\sum_i\frac{1}{\nu(I_i^{(N)})}\bigl(E^k_{I_i^{(N)}}\bigr)_{ij}
+\sum_{\ell<N}\frac{1}{\nu(I_{\ell})}\bigl(E_{\ell}^k\bigr)_{ij}\\
&=\sum_i\frac{N_k}{\lambda(I_i^{(N)})}\bigl(E^k_{I_i^{(N)}}\bigr)_{ij}
+\sum_{\ell<N}\frac{N_k}{\lambda(I_{\ell})}\bigl(E_{\ell}^k\bigr)_{ij}\\
&=N_k\bigl((L_d^k)^+\bigr)_{ij}.
\end{split}
\end{equation*}
Since the inner product is bilinear, we obtain
\[
\bigl((L_d^k)^+\bigr)_{ij}
=\bigl\langle\mathbf{1}_{I(x_i,k)},\,
L_W^{+}\mathbf{1}_{I(x_j,k)}\bigr\rangle_{L^2},\quad k>0.
\]
Each spectral projector attached to $L_r^+$ has an associated operator
$T(N_k\,\hat{E}_I^k)$. The matrix representation in the
finite-dimensional space $\mathcal{X}_N$ of this operator is:
\[
\bigl\langle\sqrt{N_k}\,\mathbf{1}_{I(x_i,k)},\,
T(N_k\,\hat{E}_I^k)\sqrt{N_k}\,\mathbf{1}_{I(x_j,k)}\bigr\rangle_{L^2}
=\bigl(\hat{E}_I^k\bigr)_{ij}.
\]
Therefore, the operator
\[
T\bigl((L_r^k)^+\bigr)
:=\sum_{i}\frac{N_k}{\hat\lambda(I_i^{(N)})}P_{I_i^{(N)}}f(x)
+\sum_{\ell<N}\frac{N_k}{\hat\lambda(I_{\ell})}
T(N_k\,\hat{E}_{I_{\ell}}^k)f(x)
\]
has as its matrix representation in $\mathcal{X}_N$ the matrix
$(L_r^k)^+$:
\[
\bigl\langle\sqrt{N_k}\,\mathbf{1}_{I(x_i,k)},\,
T\bigl((L_r^k)^+\bigr)\sqrt{N_k}\,\mathbf{1}_{I(x_j,k)}\bigr\rangle_{L^2}
=N_k\bigl((L_r^k)^+\bigr)_{ij}.
\]

\begin{thm}\label{thm:convergencepseudoinverserandom}
Let $W$ be an ultrametric graphon satisfying Hypothesis~1. Then
\[
\|T\bigl((L_r^k)^+\bigr)-L_W^+\|_{L^2}\rightarrow 0,\quad\text{a.s.}
\]
as $k\rightarrow\infty$.
\end{thm}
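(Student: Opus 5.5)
The plan is to decompose both operators along the hierarchy and compare them term by term. Under Hypothesis~1 the diagonal blocks at the last level are deterministic cliques. So the random Laplacian $L_r^k$ and $L_d^k$ share the same projectors $E^k_{I_i^{(N)}}$ on the eigenspaces attached to the last-level intervals, and the associated operator on $\mathcal{X}_N$ is the restriction of $P_{I_i^{(N)}}$. By the definition of $T((L_r^k)^+)$ and of $L_W^+$, the difference is then bounded by
\begin{equation*}
\begin{split}
\|T((L_r^k)^+)-L_W^+\|
&\le\sum_i\Bigl|\frac{N_k}{\hat\lambda(I_i^{(N)})}-\frac{1}{\nu(I_i^{(N)})}\Bigr|\,\|P_{I_i^{(N)}}\|
+\sum_{\ell<N}\Bigl|\frac{N_k}{\hat\lambda(I_\ell)}-\frac{1}{\nu(I_\ell)}\Bigr|\,\|T(N_k\hat E^k_{I_\ell})\|\\
&\quad+\sum_{\ell<N}\frac{1}{|\nu(I_\ell)|}\,\|T(N_k\hat E^k_{I_\ell})-T(E_{I_\ell})\|_{HS},
\end{split}
\end{equation*}
up to an error from comparing $P_{I_i^{(N)}}$ restricted to $\mathcal{X}_N$ with $P_{I_i^{(N)}}$ on all of $L^2$. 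I will take the $L^2$ norm in the statement to be the operator norm; the terms $P_{I_i^{(N)}}$ are not Hilbert--Schmidt. The number of internal nodes is finite and independent of $k$, so all sums are finite.

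The steps, in order, are as follows. First, I will control the scalar factors. For each node, Proposition~\ref{prop:inverseeigenvalueconvergenve} gives $|N_k/|\hat\lambda|-\nu^{-1}|\le 2c_0^{-2}N_k^{\gamma-1/2}$ with probability at least $1-2N_ke^{-CN_k^{2\gamma}}$. One needs $\nu(I)\neq0$, which holds because $w>0$ and the last-level factor $\nu(I_i^{(N)})$ contains the full-connectivity term. For the last-level nodes the random eigenvalues must also be matched to $\hat\lambda(I_i^{(N)})$. Here the large multiplicity $N_k\mu(I_i^{(N)})$ is harmless, because I only need Weyl (Proposition~\ref{prop:eigenvalueaprox}) uniformly over all indices, not a Davis--Kahan bound. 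Second, I will bound the norms $\|T(N_k\hat E^k_{I_\ell})\|\le\|T(N_k\hat E^k_{I_\ell})\|_{HS}=\|\hat E^k_{I_\ell}\|_F=\sqrt{m}$, which is uniformly bounded. Third, for the projector difference terms I will invoke Theorem~\ref{thm:randomspectralconvergence} for every node with $\ell<N$, which applies since these multiplicities do not depend on $k$.

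To pass from "with high probability" to "almost surely", I will use Borel--Cantelli. Indeed $\sum_k 2N_ke^{-CN_k^{2\gamma}}<\infty$ since $N_k=kN$, and a finite union bound over the nodes preserves summability. Hence almost surely all the estimates hold for all large $k$, and each term is $O(N_k^{\gamma-1/2})\to0$. This step also upgrades Theorem~\ref{thm:randomspectralconvergence}, whose proof gives a summable failure probability.

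I expect the main obstacle to be the last-level part, for two reasons. The eigenspaces there have dimension proportional to $N_k$, so Proposition~\ref{prop:eigenvectoraproximation} gives no useful bound. In addition, the random Laplacian only equals $L_d^k$ on the diagonal blocks, since the off-diagonal edges are random. The random eigenvectors within a last-level cluster therefore need not coincide exactly with those of $L_d^k$, contrary to what the paper's identification suggests. My first attempt will be to avoid individual projectors: I will group all the eigenvalues in the last-level band of each $I_i^{(N)}$ together with their total projector, and compare $\hat P:=\mathrm{Id}-\sum_{\ell<N}\hat E^k_{I_\ell}-\frac1{N_k}\mathbf 1\mathbf 1^\top$ with its deterministic analogue. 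This uses the already-controlled finite-rank parts: the operator-norm difference of the complements equals that of the finite-rank sum, which is $O(N_k^{\gamma-1/2})$ after normalization. If different last-level intervals have distinct $\nu$, I will separate their bands with Davis--Kahan in operator norm (not Frobenius), using $\sin\Theta\le\|L_r^k-L_d^k\|/(\delta N_k)$, which avoids the $\sqrt{\text{multiplicity}}$ loss.
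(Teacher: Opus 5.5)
Your skeleton is the paper's. Its proof is your displayed three-term bound, boundedness of $\|P_{I_i^{(N)}}\|$ and $\|T(N_k\hat E^k_{I_\ell})\|$, the inequality $\|\cdot\|_{L^2}\le\|\cdot\|_{HS}$ for the finite-rank terms, and Proposition~\ref{prop:inverseeigenvalueconvergenve} for the scalar factors. It then asserts the almost sure limit. You depart from it in two places, and both are improvements, not just alternatives.

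\emph{Almost sure convergence.} Your Borel--Cantelli step is what actually justifies ``almost surely.'' The paper only has high-probability bounds and never checks summability.

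\emph{The last-level term.} In the paper this term is purely scalar. That rests on the identification, made just before Proposition~\ref{prop:inverseeigenvalueconvergenve}, that $L_r^k$ has the same last-level projectors $E^k_{I_i^{(M)}}$ as $L_d^k$. Accordingly, the paper builds $T((L_r^k)^+)$ from the deterministic $P_{I_i^{(N)}}$ and a single $\hat\lambda(I_i^{(N)})$. You are right that this identification is false. For $a,b\in I_i^{(M)}$ and $z\notin I_i^{(M)}$ one has $(L_r^k(e_a-e_b))_z=\xi_{za}-\xi_{zb}$, which is nonzero in general. Moreover, the external degrees fluctuate by order $\sqrt{N_k}$, so the band is spread. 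As written, the paper therefore controls only the hybrid operator it defines, whose matrix on $\mathcal X_N$ is not $N_k(L_r^k)^+$. Your band projectors, controlled by Davis--Kahan in operator norm, are exactly what is needed to reach the genuine pseudo-inverse, and that is what Theorem~\ref{thm:convergencecommutetimes} uses.

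\emph{Cleanups.} Delete your opening sentence saying that $L_r^k$ and $L_d^k$ share the last-level projectors; it is precisely the identification you later reject. Group eigenvalues by distinct values of $\nu$ rather than by nodes. Distinct nodes can share an eigenvalue, e.g.\ $A$ and $B$ in Example~\ref{example:graphonultra}; then the node-wise $\hat E^k_I$ is undefined and only $\sum_{\nu(I)=\theta}E_I$ is approximated. That is harmless, since it is all the pseudo-inverse sees. The ``error'' you mention between $\mathcal X_N$ and $\mathcal X_N^\perp$ is just the scalar term: both operators leave this decomposition invariant and act as multiplication operators on $\mathcal X_N^\perp$.

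\emph{Choice of norm.} Using the operator norm is forced, not merely convenient. As soon as some inter-community probability lies in $(0,1)$, the degree fluctuations alone make $\|N_k(L_r^k)^+-N_k(L_d^k)^+\|_F$ of order one. So no Hilbert--Schmidt version of the statement can hold.

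\emph{A shorter route.} The projector bookkeeping can be bypassed entirely. Both Laplacians are symmetric with $\mathbf 1$ in the kernel, so they leave $\mathbf 1^\perp$ invariant. On $\mathbf 1^\perp$ the eigenvalues of $L_d^k$ are $-N_k\nu(I)\le -c_0N_k$, where $c_0:=\min_I\nu(I)>0$. On the event $\|L_r^k-L_d^k\|\le N_k^{1/2+\gamma}$ used in Proposition~\ref{prop:eigenvalueaprox}, Weyl puts the eigenvalues of $L_r^k$ on $\mathbf 1^\perp$ below $-\tfrac{c_0}{2}N_k$ for large $k$. The identity $(L_d^k|_{\mathbf 1^\perp})^{-1}-(L_r^k|_{\mathbf 1^\perp})^{-1}=(L_r^k|_{\mathbf 1^\perp})^{-1}(L_r^k-L_d^k)(L_d^k|_{\mathbf 1^\perp})^{-1}$ then gives
\[
\bigl\|N_k(L_r^k)^+-N_k(L_d^k)^+\bigr\|\le N_k\cdot\frac{2}{c_0N_k}\cdot N_k^{\frac12+\gamma}\cdot\frac{1}{c_0N_k}=\frac{2}{c_0^2}\,N_k^{\gamma-\frac12}.
\]
Since $L_W^+$ restricted to $\mathcal X_N$ is unitarily equivalent to $N_k(L_d^k)^+$, three ingredients prove the theorem directly: this bound on $\mathcal X_N$, the scalar convergence on $\mathcal X_N^\perp$, and Borel--Cantelli. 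This route needs no Davis--Kahan, has no multiplicity or coincidence issues, and does not use Hypothesis~1.
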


\begin{proof}
By adding and subtracting
$\sum_{\ell<N}\frac{1}{\nu(I_\ell)}T(N_k\,\hat{E}_{I_{\ell}}^k)$,
we obtain
\begin{equation*}
\begin{split}
\|T\bigl((L_r^k)^+\bigr)-L_W^+\|_{L^2}
&\leq \sum_i\left|\frac{N_k}{\hat\lambda(I_i^{(N)})}
-\frac{1}{\nu(I_i^{(N)})}\right|
\|P_{I_i^{(N)}}\|_{L^2}\\
&\quad+\sum_{\ell<N}\|T(N_k\,\hat{E}_{I_{\ell}}^k)\|_{L^2}
\left|\frac{N_k}{\hat\lambda(I_\ell)}
-\frac{1}{\nu(I_\ell)}\right|\\
&\quad+\sum_{\ell<N}\frac{1}{\nu(I_{\ell})}
\|T(N_k\,\hat{E}_{I_{\ell}}^k)-T(E_{I_\ell})\|_{L^2}.
\end{split}
\end{equation*}
The sequences $\|P_{I_i^{(N)}}\|_{L^2}$ and
$\|T(N_k\,\hat{E}_{I_{\ell}}^k)\|_{L^2}$ are convergent and therefore
bounded. The space $L^2([0,1])$ satisfies
$\|\cdot\|_{L^2}\leq\|\cdot\|_{HS}$; consequently, since the RHS is a
finite sum, after applying
Proposition~\ref{prop:inverseeigenvalueconvergenve} and
Corollary~\ref{cor:eigenvalueultrametricgraphon} the limit follows.
\end{proof}

 As explained in the introduction of this section, many dynamic variables 
are related to the pseudo-inverse $L^+$. Two important variables for the 
dynamics generated by the CTMC associated with a given Laplacian are the 
hitting times and the commute times. The mean first passage time attached 
to the generator $\frac{1}{N_k}L_d^k$ is
\[
m_{ij}^{d,k}=N_k^2\bigl((L_d^k)^+_{jj}-(L_d^k)^+_{ij}\bigr)
=\langle e_i,\,N_k^2(L_d^k)^+(e_i-e_j)\rangle,
\]
where $e_i$ is the $i$-th canonical basis vector of $\mathbb{R}^{N_k}$. 
Analogously, the mean first passage time attached to the generator 
$\frac{1}{N_k}L_r^k$ is
\[
m_{ij}^{r,k}=N_k^2\bigl((L_r^k)^+_{jj}-(L_r^k)^+_{ij}\bigr)
=\langle e_i,\,N_k^2(L_r^k)^+(e_i-e_j)\rangle.
\]
This inner product can be expressed as an inner product in $L^2$ via the 
attached operators:
\[
N_k\langle e_i,N_k(L_r^k)^+(e_i-e_j)\rangle_{\mathbb{R}^{N_k}}
=N_k\langle\sqrt{N_k}\,\mathbf{1}_{I(x_i,k)},\,T((L_r^k)^+)
\bigl(\sqrt{N_k}\,\mathbf{1}_{I(x_i,k)}-\sqrt{N_k}\,\mathbf{1}_{I(x_j,k)}\bigr)
\rangle_{L^2},
\]
and
\[
N_k\langle e_i,N_k(L_d^k)^+(e_i-e_j)\rangle_{\mathbb{R}^{N_k}}
=N_k\langle\sqrt{N_k}\,\mathbf{1}_{I(x_i,k)},\,T(L_W^+)
\bigl(\sqrt{N_k}\,\mathbf{1}_{I(x_i,k)}-\sqrt{N_k}\,\mathbf{1}_{I(x_j,k)}\bigr)
\rangle_{L^2}.
\]
Therefore,
\[
\left|\frac{1}{N_k}m_{ij}^{r,k}-\frac{1}{N_k}m_{ij}^{d,k}\right|
\leq\sqrt{2}\,\|T((L_r^k)^+)-T(L_W^+)\|_{L^2},
\]
where by Theorem~\ref{thm:convergencepseudoinverserandom} the right-hand 
side tends to zero a.s. In a similar way,
\[
\left|\frac{1}{N_k}C_{ij}^{r,k}-\frac{1}{N_k}C_{ij}^{d,k}\right|
\leq\sqrt{2}\,\|T((L_r^k)^+)-T(L_W^+)\|_{L^2},
\]
where $C_{ij}=m_{ij}+m_{ji}$ is the commute time of the CTMC.

\begin{thm}
\label{thm:convergencecommutetimes}
Let $W$ be an ultrametric graphon satisfying Hypothesis~1. Then
\[
\left|\frac{1}{N_k}m_{ij}^{r,k}-\frac{1}{\nu(I_{F(j)})}\right|
\to 0, \quad\text{a.s.,}
\]
and
\[
\left|\frac{1}{N_k}C_{ij}^{r,k}-\left(\frac{1}{\nu(I_{F(i)})}
+\frac{1}{\nu(I_{F(j)})}\right)\right|\to 0, \quad\text{a.s.,}
\]
as $k\to\infty$.
\end{thm}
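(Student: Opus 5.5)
The proof splits into a random-to-deterministic comparison and an explicit computation of the deterministic hitting time. By the bound derived just before the statement,
\[
\Bigl|\tfrac{1}{N_k}m_{ij}^{r,k}-\tfrac{1}{N_k}m_{ij}^{d,k}\Bigr|\le\sqrt2\,\|T((L_r^k)^+)-L_W^+\|_{L^2},
\]
and the right-hand side tends to zero almost surely by Theorem~\ref{thm:convergencepseudoinverserandom}. So it suffices to show that
\[
\tfrac{1}{N_k}m_{ij}^{d,k}\to\tfrac{1}{\nu(I_{F(j)})}
\]
deterministically. The commute-time statement then follows by adding the statements for $(i,j)$ and $(j,i)$ and using the triangle inequality.

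\textbf{Deterministic computation.} I would write out $(L_d^k)^+$ through its spectral decomposition, using the explicit projectors of Proposition~\ref{prop:eigenprojectorsultra}:
\[
(L_d^k)^+=\sum_{I}\frac{1}{|\lambda(I)|}E_I^k,
\]
where the sum runs over all internal nodes, including the last-level intervals $I^{(M)}$. Under Hypothesis~1, the last-level projectors are $E^k_{I^{(M)}}=\delta_{xy}-1/m(I^{(M)})$. Then I would evaluate the combination
\[
N_k^2\bigl((L_d^k)^+_{jj}-(L_d^k)^+_{ij}\bigr)
\]
node by node.

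\textbf{Which terms survive.} The key observation is that any node $I$ for which $i$ and $j$ lie in the same child of $I$ (or where $I$ contains $j$ but not $i$) gives a contribution to $E_I^k(j,j)-E_I^k(i,j)$ of order $1/m(\cdot)=O(1/N_k)$. After multiplication by $N_k^2/|\lambda(I)|\sim N_k/\nu(I)$, each such term is $O(1)$, so after the further division by $N_k$ it vanishes. There are finitely many such internal nodes, independent of $k$.

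The only term carrying an order-one entry is the last-level projector containing $j$, through its diagonal entry $\delta_{jj}=1$. Its contribution is
\[
\frac{N_k^2}{|\lambda(I_{F(j)})|}\bigl(1-\delta_{ij}+O(1/N_k)\bigr),
\]
where $I_{F(j)}$ denotes the deepest interval containing $j$, i.e. the father of the leaf $j$. Dividing by $N_k$ and using $\lambda(I_{F(j)})/N_k=-\nu(I_{F(j)})$ gives the limit $1/\nu(I_{F(j)})$ for $i\ne j$.

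\textbf{Scaling convention and main obstacle.} I would make the time scaling consistent with the generator $\tfrac1{N_k}L$ and the factor $N_k^2$ exactly as in the displayed formula, checking that the off-diagonal $O(1/N_k)$ entries are indeed negligible. I expect this bookkeeping of powers of $N_k$ to be the main obstacle: it requires matching the normalizations in $m_{ij}=N_k^2(\cdot)$, in the operator $T$, and in $\nu$ so that exactly the diagonal term of the finest projector survives. I would first verify it on the one-level example with Hypothesis~1, where every quantity is explicit, and then extend the argument to the general tree by induction along the ancestral chain $\gamma_r$.
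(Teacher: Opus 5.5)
Your proposal follows the paper's proof: the random chain is compared with the deterministic one through the bound displayed before the statement and Theorem~\ref{thm:convergencepseudoinverserandom}, and $\frac{1}{N_k}m_{ij}^{d,k}=N_k\bigl((L_d^k)^+_{jj}-(L_d^k)^+_{ij}\bigr)$ is evaluated from $(L_d^k)^+=\sum_I|\lambda(I)|^{-1}E_I^k$, where every projector at level $<M$ has entries $O(1/N_k)$ (this includes the least common ancestor of $i$ and $j$, a case your list omits) and only the diagonal entry of $E^k_{I_{F(j)}}$ contributes at order one, which is the leading term of the paper's ``direct computation''. The delicate point, however, is the imported random step rather than the power counting, and you inherit the paper's weakness there: even under Hypothesis~1 the edges leaving $I_i^{(M)}$ are random, so $L_r^k$ does not have the projectors $E^k_{I_i^{(M)}}$ and the paper's $T((L_r^k)^+)$ does not represent $N_k(L_r^k)^+$; a self-contained replacement is $\bigl|\tfrac{1}{N_k}m_{ij}^{r,k}-\tfrac{1}{N_k}m_{ij}^{d,k}\bigr|\le\sqrt{2}\,N_k\|(L_r^k)^+\|\,\|L_r^k-L_d^k\|\,\|(L_d^k)^+\|\le\tfrac{2\sqrt{2}}{c^2}N_k^{\gamma-\frac12}$ with probability at least $1-2N_ke^{-CN_k^{2\gamma}}$, which holds because both kernels equal $\mathrm{span}(\mathbf{1})$ and $|\lambda_2(L_d^k)|\ge cN_k$ with $c$ the smallest value of $w$ on the tree (Weyl then gives $|\hat\lambda_2|\ge cN_k/2$), after which Borel--Cantelli yields almost sure convergence.
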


\begin{proof}
 The mean first passage time of operator of the CTMC with infinitesimal generator $\frac{1}{N_k}L_d^k$  satisfies
\[
m_{ij}^{d,k} = N_k^2\left((L_d^k)^+_{jj} - (L_d^k)^+_{ij}\right),
\]
since $\frac{1}{N_k}L_d^k$ has pseudo-inverse $N_k(L_d^k)^+$. A direct 
computation using the explicit eigenvalue decomposition of the ultrametric 
Laplacian yields: for sampled points $x_i,x_j\in [0,1]$ let $I(x_i,k)$ and $I(x_j,k)$ be the sub-intervals of the corresponding intervals at the last level, with length
$\frac{1}{N_k}$ and \(x_i\neq x_j\), then
\[
m_{ij}^{d,k}=\frac{1}{\nu(I_{F(j)})}\cdot\frac{1}{\mu(I(x_j,k))}
+\sum_{T:\,I_{F(j)}\subseteq T\subsetneq I_n}
\frac{1}{\mu(T^+)}\left(\frac{1}{\nu(T)}-\frac{1}{\nu(T)}\right).
\]
Therefore, 
\[\frac{m_{ij}^{d,k}}{N_k}=\frac{1}{\nu(I_{F(j)})}\cdot\frac{1}{m(I(x_j,k))}
+\sum_{T:\,I_{F(j)}\subseteq T\subsetneq I_n}
\frac{1}{m(T^+)}\left(\frac{1}{\nu(T)}-\frac{1}{\nu(T)}\right).\]
As \(N_k\rightarrow\infty\) we obtain

\[\lim_{N_k\rightarrow\infty}\frac{m_{ij}^{d,k}}{N_k}=\frac{1}{\nu(I_{F(j)})},\]
since 
\[
\left|\frac{1}{N_k}m_{ij}^{r,k}-\frac{1}{N_k}m_{ij}^{d,k}\right|
\leq\sqrt{2}\,\|T((L_r^k)^+)-T(L_W^+)\|_{L^2}\to 0, \quad\text{a.s.}
\]
Where the right-hand side tends to zero a.s. by Theorem \ref{thm:convergencepseudoinverserandom}, the desired limit follows. Similarly, 
\[
\left|\frac{1}{N_k}C_{ij}^{r,k}-\left(\frac{1}{\nu(I_{F(i)})}
+\frac{1}{\nu(I_{F(j)})}\right)\right|\to 0, \quad\text{a.s.,}
\]
as stated.
\end{proof}

The eigenvalue \(\nu(I_{F(i)})\) rewrites as \(\nu(I_{F(i)})=\int_{0}^{1}W(x_i,y)dy\), for \(x_i\in I_F(i)\) i.e. the expected degree in \(I_i\). Therefore, the convergence in Theorem \ref{thm:convergencecommutetimes} shows how the commute times collapse to the sum of the inverses of the expected degrees. 

The commute distance is often regarded as a global measure of connectivity because it accounts for all possible paths between two vertices. Since it is derived from the pseudo-inverse of the Laplacian, it incorporates information from the entire graph rather than relying solely on shortest connections. \newline

Intuitively, one might expect that vertices connected through many alternative routes should be considered “close”, while vertices separated by structural bottlenecks should appear “far apart”. This intuition leads to the following commonly assumed property: Vertices belonging to the same cluster of a graph should have small commute distance, whereas vertices in different clusters should have a comparatively large commute distance as mentioned in \cite{vonluxburg2014hitting}.

However, as we prove, this behavior does not necessarily persist in hierarchical community graphs. Our result show how the commute distance loses the information about the ultrametric structure as the number of vertices increase; the commute times collapse to a quantity that only regard local information, namely, the sum of the inverse of the degrees. For example, in a regular tree with regular size communities in all levels, the degree is homogeneous over all vertices, and therefore, even if two edges belong to two different communities, the commute times matrix converges to a constant matrix \(a\textbf{1}\textbf{1}^{T}\).   

A similar result have been also studied in the setting of discrete time random walks \cite{vonluxburg2014hitting}. Our result extend the collapse phenomena to the continuous time setting in hierarchical community networks, and more interestingly our proof is completely independent, since its based on a description of the spectrum of the Hierarchical pseudo-inverse Laplacian for hierarchical community networks.

\section{Hierarchical community structure and its impact on the
stability of epidemics in the SIS model}

We now study a deterministic SIS epidemic model evolving on a network
represented by a connected graph $G$. We assume homogeneous infection
and recovery rates across the population. The dynamics of the $i$-th
subpopulation are given by
\begin{equation}
\label{eq:SISequationmodel}
\dot{x}_i(t) = -\delta x_i(t) + \beta \sum_{j=1}^N a_{ij} x_j(t)
\bigl(1 - x_i(t)\bigr),
\end{equation}
where $x_i(t) \in [0,1]$ denotes the fraction of infected individuals
in subpopulation $i$ at time $t$, $\delta > 0$ is the recovery rate,
and $\beta > 0$ is the infection rate.

Let $x(t) = [x_1(t),\dots,x_N(t)]^T$ denote the state vector of the
system. In matrix form, the dynamics can be written as
\begin{equation}
\dot{x}(t) = (\beta A - \delta I)x(t) - \beta X(t)Ax(t),
\end{equation}
where $A$ is the adjacency matrix of the graph, $I$ is the identity
matrix, and $X(t)=\mathrm{diag}(x_1(t),\dots,x_N(t))$.

The equilibrium $x=0$ (disease-free state) is globally asymptotically
stable provided the following spectral condition holds:
\begin{equation}\label{eq:thresholdSIS}
\lambda_1(A)\,\frac{\beta}{\delta} < 1,
\end{equation}
where $\lambda_1(A)$ denotes the largest eigenvalue of the adjacency
matrix of a graph $G=(V,E)$.

Following~\cite{vizuete2020graphon_sis}, when analyzing sequences of
graphs approximated by graphons, it is natural to consider families of
graphs indexed by their size $N$. In this setting, the epidemiological
parameters may also depend on $N$, and we shall explicitly write
$\beta_N$ and $\delta_N$ when needed.

As an application of the graphon framework, we derive a condition
ensuring convergence to the disease-free state for graphs sampled from
a given graphon $W$, expressed directly in terms of structural
properties of $W$.

Denote by $d_{\max}$ the maximum degree of $G$ and let
$\bar{d}=\frac{1}{|V|}\sum_{i\in V}d_i$ be the average degree. Since
$\lambda_1(A)=\max_{\|x\|_2=1}x^\top Ax$, it can be shown that
$\lambda_1(A)$ satisfies
\begin{equation}
\label{eq:inequalitydegreemaximumeigenvalue}
\bar{d} \;\le\; \lambda_1(A) \;\le\; d_{\max}.
\end{equation}
This expression allows us to obtain two sufficient conditions, one for
stability and one for instability. Using
Equation~\eqref{eq:thresholdSIS} we obtain:
\[
\bar{d}>\frac{\delta}{\beta}
\;\implies\;
\lambda_1(A)>\frac{\delta}{\beta},
\qquad
\text{(sufficient condition for the endemic regime),}
\]
\[
\frac{\delta}{\beta}>d_{\max}
\;\implies\;
\frac{\delta}{\beta}>\lambda_1(A),
\qquad
\text{(sufficient condition for disease-free stability).}
\]
The following result can be found in~\cite{vizuete2021laplacian}. 

\begin{lem}
\label{lem:convergenvemaxdegreegraphon}
    Given a graphon \(W\), and a positive number \(\nu<e^{-1}\), with probability at least \(1-\nu\) the normalized degrees of the graphs \(G_d^N\) and \(G_r^N\) sampled from \(W\) satisfy: 
    \begin{equation}
        \max_{i=1,\dots,N} \left|\frac{d_d(i)}{N}-\frac{d_r(i)}{N}\right|\leq \sqrt{\frac{\log(2N/\nu)}{N}}.
    \end{equation}
    for \(N\) large enough. 
\end{lem}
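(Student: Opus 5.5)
The plan is to combine a Hoeffding bound for each fixed vertex with a union bound over the $N$ vertices. Dependence between different rows (through $\xi_{ij}=\xi_{ji}$) does not matter, since the union bound needs no independence across $i$.

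\textbf{Setup.} Fix a vertex $i$. The random degree is $d_r(i)=\sum_{j\neq i}\xi_{ij}$, where the $\xi_{ij}$, $j\neq i$, are independent Bernoulli variables with means $W(x_i,x_j)$. The deterministic degree is $d_d(i)=\sum_{j}W(x_i,x_j)$. This sum may include the diagonal term $W(x_i,x_i)\in[0,1]$, depending on whether $A_d^N$ is taken with zero diagonal. Hence
\[
\bigl|d_d(i)-d_r(i)\bigr|\le \Bigl|\sum_{j\neq i}\bigl(\xi_{ij}-W(x_i,x_j)\bigr)\Bigr|+1 .
\]
The extra $+1$ is the only reason ``$N$ large enough'' is needed.

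\textbf{Hoeffding for one vertex.} The sum $S_i:=\sum_{j\neq i}(\xi_{ij}-W(x_i,x_j))$ has $N-1$ independent centred summands, each in an interval of length one. Hoeffding's inequality gives
\[
\mathbb{P}\bigl(|S_i|\ge t\bigr)\le 2\exp\!\Bigl(-\tfrac{2t^2}{N-1}\Bigr).
\]

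\textbf{Union bound and choice of $t$.} Set $\varepsilon_N:=\sqrt{\log(2N/\nu)/N}$ and $t:=N\varepsilon_N-1=\sqrt{N\log(2N/\nu)}-1$. On the event $\bigcap_i\{|S_i|<t\}$ we get $\max_i|d_d(i)-d_r(i)|/N\le\varepsilon_N$. By the union bound, the complement has probability at most $2N\exp(-2t^2/(N-1))$. It therefore suffices to have
\[
t\ge\sqrt{\tfrac{N-1}{2}\log(2N/\nu)},
\]
since then $2N\exp(-2t^2/(N-1))\le 2N\cdot\nu/(2N)=\nu$.

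\textbf{Verifying the condition.} Since $\sqrt{(N-1)/2}\le\sqrt{N}/\sqrt2$, the condition follows from
\[
\sqrt{N\log(2N/\nu)}\,\bigl(1-\tfrac{1}{\sqrt2}\bigr)\ge 1 .
\]
The hypothesis $\nu<e^{-1}$ gives $\log(2N/\nu)>1$, so this holds as soon as $\sqrt N(1-1/\sqrt2)\ge1$, i.e. $N\ge 12$. If $A_d^N$ has zero diagonal, the $+1$ disappears and the bound holds for every $N$.

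\textbf{Main obstacle.} The only delicate point is bookkeeping. First, the diagonal term and the $N-1$ versus $N$ summand count must be absorbed into the slack $1-1/\sqrt2$ of the Hoeffding exponent. Second, the sampling points $x_i$ are deterministic here (uniform grid), so no additional randomness from the positions needs controlling. If the $x_i$ were random, one would condition on them first and apply the same argument conditionally.
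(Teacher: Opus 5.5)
Your proof is correct and complete. The paper does not prove this lemma itself; it quotes it from \cite{vizuete2021laplacian}, so there is no internal argument to compare with. Your route is the standard one for degree-concentration bounds: Hoeffding applied to the $N-1$ independent centred summands $\xi_{ij}-W(x_i,x_j)$ of a fixed row, then a union bound over the $N$ rows, which needs no independence across rows.

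What your self-contained version adds is bookkeeping adapted to this paper's conventions. You notice that $A_d^N=[W(x_i,x_j)]_{i,j=1}^N$ keeps the diagonal entry $W(x_i,x_i)$, while $A_r^N$ has zero diagonal. You then show that this $O(1)$ discrepancy is exactly what ``for $N$ large enough'' has to absorb, with the explicit threshold $N\ge 12$. Your computation also shows that the hypothesis $\nu<e^{-1}$ is stronger than necessary: you only use $\log(2N/\nu)>1$, which holds for every $\nu<1$ once $N\ge 2$.

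Finally, what you establish is a bound holding with probability at least $1-\nu$ for each fixed $N$ separately. This is the reading used in the proof of Theorem~\ref{thm:SISinequalityultra}. A version holding on a single event simultaneously for all large $N$ would need a further union bound over $N$. Your union-bound estimate of the failure probability at size $N$ is of order $\nu^2/N$, which is not summable, so that stronger version does not follow from the argument as written.
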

The next result follows from these observations.

\begin{thm}\label{thm:SISinequalityultra}
Let $W$ be an ultrametric graphon. Let $G_r^k$ be the attached
sequence of random graphs of size $N_k>0$. Let $A_r^k$ be the
associated random adjacency matrix sequence. For any $\eta>0$:

\noindent(i) If
\[
\sum_i\mu(I_i^{(M)})\nu(I_i^{(M)})
\;>\;
\frac{1}{N_k}\frac{\delta_{N_k}}{\beta_{N_k}}+\eta,
\]
for all sufficiently large $k$, then the disease-free equilibrium is
unstable (endemic regime).

\noindent(ii) If
\[
\frac{1}{N_k}\frac{\delta_{N_k}}{\beta_{N_k}}
\;>\;
\max_i\nu(I_i^{(M)})+\eta,
\]
for all sufficiently large $k$, then the disease-free equilibrium is
globally asymptotically stable.
\end{thm}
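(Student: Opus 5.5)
The plan is to combine the sandwich $\bar d\le\lambda_1(A)\le d_{\max}$ from \eqref{eq:inequalitydegreemaximumeigenvalue} with the SIS threshold \eqref{eq:thresholdSIS}. The random degrees are controlled by the deterministic ones through Lemma~\ref{lem:convergenvemaxdegreegraphon}, and the deterministic degrees are computed exactly from the ultrametric structure.

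\textbf{Step 1: deterministic degrees.} For a sampled point $x_i$ lying in a last-level interval $I_j^{(M)}$, the deterministic degree is $d_d(i)=\sum_{y\neq x_i}w(d(x_i,y))$. By the second expression in Proposition~\ref{prop:eigenvalueofultrametricoperator} (counting measure, $B_n=I_j^{(M)}$), this equals $|\lambda(I_j^{(M)})|=N_k\,\nu(I_j^{(M)})$, up to an $O(1)$ correction. That correction comes from the diagonal term $w(\mathrm{diam})$ at the point itself; its size is at most $1$, so after dividing by $N_k$ it is $O(1/N_k)$. Hence, uniformly in $i$,
\[
\frac{d_d(i)}{N_k}=\nu(I_j^{(M)})+O(N_k^{-1}).
\]
Averaging over vertices, and using that $I_j^{(M)}$ contains exactly $N_k\mu(I_j^{(M)})$ points, gives
\[
\frac{\bar d_d}{N_k}=\sum_j\mu(I_j^{(M)})\nu(I_j^{(M)})+O(N_k^{-1}),\qquad \frac{d_{d,\max}}{N_k}=\max_j\nu(I_j^{(M)})+O(N_k^{-1}).
\]

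\textbf{Step 2: transfer to the random graph.} Apply Lemma~\ref{lem:convergenvemaxdegreegraphon} with $\nu_k:=N_k^{-2}$, or any summable sequence. On an event of probability at least $1-\nu_k$, every normalized random degree lies within $\epsilon_k:=\sqrt{\log(2N_k/\nu_k)/N_k}\to0$ of its deterministic counterpart. Therefore $\bar d_r/N_k$ and $d_{r,\max}/N_k$ are within $\epsilon_k+O(N_k^{-1})$ of the two limits computed in Step 1. Since $\sum_k\nu_k<\infty$, Borel--Cantelli shows that almost surely this holds for all sufficiently large $k$.

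\textbf{Step 3: conclude.} Choose $k$ large so that $\epsilon_k+O(N_k^{-1})<\eta$.

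For (i), the hypothesis gives $\bar d_r/N_k>\sum_j\mu\nu-\eta>\frac{1}{N_k}\frac{\delta_{N_k}}{\beta_{N_k}}$. Multiplying by $N_k$ yields $\lambda_1(A_r^k)\ge\bar d_r>\delta_{N_k}/\beta_{N_k}$, so the disease-free equilibrium is unstable; the Jacobian at $0$ is $\beta A-\delta I$, which has a positive eigenvalue.

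For (ii), the hypothesis gives $d_{r,\max}/N_k<\max_j\nu+\eta<\frac{1}{N_k}\frac{\delta_{N_k}}{\beta_{N_k}}$. Hence $\lambda_1(A_r^k)\le d_{r,\max}<\delta_{N_k}/\beta_{N_k}$, and \eqref{eq:thresholdSIS} gives global asymptotic stability.

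\textbf{Main obstacle.} The delicate point is the probabilistic bookkeeping: the statement holds with high probability, or almost surely for large $k$. Lemma~\ref{lem:convergenvemaxdegreegraphon} is stated only for fixed $\nu<e^{-1}$ and ``$N$ large enough'', so one must check that letting $\nu_k\to0$ polynomially keeps $\epsilon_k\to0$ and that the ``large enough'' threshold is uniform. If that is unclear, I would fall back to a direct Hoeffding bound plus a union bound over the $N_k$ vertices. This gives a failure probability of $2N_ke^{-2N_k t^2}$, which is summable for any fixed $t=\eta/2$, and Borel--Cantelli then applies.
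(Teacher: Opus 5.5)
Your proposal is correct and follows essentially the same route as the paper: the sandwich $\bar d\le\lambda_1(A)\le d_{\max}$, combined with Lemma~\ref{lem:convergenvemaxdegreegraphon}, replaces the random normalized average and maximum degrees by $\sum_i\mu(I_i^{(M)})\nu(I_i^{(M)})$ and $\max_i\nu(I_i^{(M)})$ up to a margin smaller than $\eta$. Several of your steps are useful refinements of points the paper leaves implicit or handles more loosely: you identify the deterministic degrees with $N_k\nu(I_j^{(M)})$ up to an $O(1)$ term, you control the average degree through the same uniform degree bound (where the paper invokes cut-metric convergence), you justify instability via the Jacobian $\beta A-\delta I$, and you upgrade the paper's probability-$1-\nu$ statement to an almost-sure one for all large $k$ via Borel--Cantelli (or Hoeffding plus a union bound).
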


\begin{proof}
Fix $\eta>0$. For any graphon,
Theorem~\ref{teorem:convergencegraphons} implies that
$\frac{\bar{d}(G_r^k)}{N_k}$ converges a.s.\ to
$\int_{[0,1]^2}W(d(x,y))\,dx\,dy$. By
Lemma~\ref{lem:convergenvemaxdegreegraphon} the maximum degree can be
approximated via $\max_i\nu(I_i^{(M)})$; therefore for a given
$0<\nu<e^{-1}$ there exists a sufficiently large $k_0$ such that
$\sqrt{\frac{\log(2N_k/\nu)}{N_k}}<\frac{\eta}{2}$, and for all
$k\ge k_0$ we have
\[
\left|
\frac{\bar{d}(G_r^k)}{N_k}
-\sum_{i}\mu(I_i^{(M)})\nu(I_i^{(M)})
\right|<\eta/2,
\qquad
\left|
\frac{d_{\max}(G_r^k)}{N_k}-\max_i\nu(I_i^{(M)})
\right|<\eta/2,
\]
with probability at least $1-\nu$.

\smallskip
\noindent\emph{(Endemic regime).}
Assume that
\[
\sum_{i}\mu(I_i^{(M)})\nu(I_i^{(M)})
\;>\;
\frac{1}{N_k}\frac{\delta_{N_k}}{\beta_{N_k}}+\eta.
\]
Then, using the reverse triangle inequality,
\[
\frac{\bar{d}(G_r^k)}{N_k}
\;\ge\;
\sum_{i}\mu(I_i^{(M)})\nu(I_i^{(M)})
-\left|
\frac{\bar{d}(G_r^k)}{N_k}
-\sum_{i}\mu(I_i^{(M)})\nu(I_i^{(M)})
\right|
\;>\;
\frac{1}{N_k}\frac{\delta_{N_k}}{\beta_{N_k}}+\frac{\eta}{2}
\;>\;
\frac{1}{N_k}\frac{\delta_{N_k}}{\beta_{N_k}}.
\]
Multiplying by $N_k$ yields
$\bar{d}(G_r^k)>\frac{\delta_{N_k}}{\beta_{N_k}}$.
By~\eqref{eq:inequalitydegreemaximumeigenvalue}, we obtain
\[
\lambda_1(A_r^k)>\frac{\delta_{N_k}}{\beta_{N_k}},
\]
i.e.\ the disease-free equilibrium is unstable.

\smallskip
\noindent\emph{(Disease-free regime).}
Assume that
\[
\frac{1}{N_k}\frac{\delta_{N_k}}{\beta_{N_k}}
\;>\;
\max_i\nu(I_i^{(M)})+\eta.
\]
Again by the reverse triangle inequality,
\[
\frac{d_{\max}(G_r^k)}{N_k}
\;\le\;
\max_i\nu(I_i^{(M)})
+\left|
\frac{d_{\max}(G_r^k)}{N_k}-\max_i\nu(I_i^{(M)})
\right|
\;<\;
\frac{1}{N_k}\frac{\delta_{N_k}}{\beta_{N_k}}-\frac{\eta}{2}
\;<\;
\frac{1}{N_k}\frac{\delta_{N_k}}{\beta_{N_k}}.
\]
Multiplying by $N_k$ yields
$d_{\max}(G_r^k)<\frac{\delta_{N_k}}{\beta_{N_k}}$.
By~\eqref{eq:inequalitydegreemaximumeigenvalue}, we obtain
\[
\lambda_1(A_r^k)<\frac{\delta_{N_k}}{\beta_{N_k}},
\]
hence the disease-free equilibrium is globally asymptotically stable.
\end{proof}
 
From the behavior of the average degree of $A_r^k$ we then have that
$\lambda_1(A_r^k)=O(N_k)$, that is the maximum eigenvalue grows
linearly in the dense ultrametric regime. Consequently, the stability
condition
\[
\lambda_1(A_r^k)\frac{\beta_{N_k}}{\delta_{N_k}}<1
\]
is naturally of the scale
\[
N_k\frac{\beta_{N_k}}{\delta_{N_k}},
\]
so that meaningful comparison with $1$ requires the quantity
$N_k\frac{\beta_{N_k}}{\delta_{N_k}}$ to remain finite. Recall that
$\mu(I)$ for an interval in the filtration is the fraction of vertices
in cluster $I$, that is, $\mu(I)=\frac{m(I)}{N_k}$ and $w(h(I))$ is
the probability of connection between clusters inside the node $I$.
Inequality~\eqref{eq:inequalitydegreemaximumeigenvalue} alone fails to
describe the influence of these quantities. It is clear, for instance,
that a decrease in the maximum degree (i.e., a reduction in the maximum
number of contacts between individuals) increases the likelihood of
reaching a disease-free regime. However, since information about the
cluster structure is not captured in general, no intervention strategy based on
the interplay between cluster sizes and their connectivity can be
formulated.

Theorem~\ref{thm:SISinequalityultra} allows us to see the influence of
the hierarchical community structure of the contact network and its
influence on how the disease spreads over time. Inequality~$(i)$ of
Theorem~\ref{thm:SISinequalityultra} implies
\[
N_k\frac{\beta_{N_k}}{\delta_{N_k}}<\frac{1}{\max_i \nu(I_i^{(M)})+\eta}.
\]
First notice that every value $\nu$ depends on the sequence of clusters
containing the cluster $I_i^{(M)}$:
\begin{equation*}
\begin{split}
\nu(I_i^{(M)})&=\sum_{I_m\in \gamma_r(I)}
\mu(I_m)\Bigl(w(h(I_m))-w(h(I_{F(m)}))\Bigr)\\
&=\mu(I_i^{(M)})w(h(I_{i}^{(M)}))+\sum_{I_m\in \gamma_r(I_i^{(M)})\setminus [0,1]}
w(h(I_{F(m)}))\bigl(\mu(I_{F(m)})-\mu(I_m)\bigr)
\end{split}
\end{equation*}
therefore the degree magnitude depends on the overall contributions of
size of clusters and edge connection in all scales. The contribution of
each term
\[w(h(I_{F(m)}))\bigl(\mu(I_{F(m)})-\mu(I_m)\bigr),\]
depend not only on the edge connection between communities, but also on
the relative size of those communities. We see that reducing the
connectivity $w(h(I_{F(m)}))$ will reduce the overall degree, but such
reduction is affected by the relative size of the children communities
inside $I_{F(m)}$; if there is a very large child community $I_m$ the
reduction of $w(h(I_{F(m)}))$ has to be greater since the factor is
multiplied by $(\mu(I_{F(m)})-\mu(I_m))$. On the other hand, if
$I_{F(m)}$ is fragmented into smaller communities, and hence a bigger
factor $(\mu(I_{F(m)})-\mu(I_m))$, the reduction of the edge
connection between them has a greater impact towards the disease-free
regime.

Therefore detecting the best connected sub-communities (the ones with greater average degree) and controlling the connection between communities in the correct scale leads to a reduction of \(\max_i \nu(I_i^{(M)})\), which could be an optimal effort towards the disease free regime. 

On the other hand, inequality~$(ii)$ of
Theorem~\ref{thm:SISinequalityultra} leads to
\[
\frac{1}{\sum_i \mu(I_i^{(M)})\nu(I_i^{(M)})-\eta}
\;<\;
N_k\frac{\beta_{N_k}}{\delta_{N_k}}.
\]
In order to achieve this the average
$\sum_i \mu(I_i^{(M)})\nu(I_i^{(M)})$ has to be large enough;
therefore a failed lock-down-type strategy, that is, reducing the edge
connection between communities may have faltered due to the lack of
global cooperation between communities. This is different in the
heterogeneous regime: for example, assume that there is an interval
$I_j^{(M)}$ with very large $\nu(I_j^{(M)})$ such that the limit of
the normalized average degree $\bar{d}(G_r^k)$ satisfies
\[
\sum_i \mu(I_i^{(M)})\nu(I_i^{(M)})\approx \mu(I_i^{(M)})\nu(I_j^{(M)}),
\]
and
\[
\max_i \nu(I_i^{(M)})=\nu(I_j^{(M)}).
\]
Consequently, the edges connecting the nodes in community $I_j^{(M)}$
become highly important, and ``global'' cooperation in this sense is a
less effective strategy; the effect of this community dominates.

The global cooperation becomes more important the more regular the
underlying tree is: assume that the underlying tree is a balanced
binary tree, that is $\mu(I)=2^{-n}$ for intervals $I$ at level $n$,
in this case it holds
\[
w(h(I_{F(m)}))\bigl(\mu(I_{F(m)})-\mu(I_m)\bigr)
=\frac{1}{2^{n+1}}w(h(I_{F(m)})),
\]
for $I_m$ at level $n$. Moreover, if $w(h(I))$ is also level-regular,
that is $w(h(I))=w_n$ for all intervals $I$ at level $n$, then global
cooperation becomes the only winning strategy, since the maximum degree
can only be decreased if there is a decline of $w(h(I))=w_n$ for all
intervals at level $n$. While intuitively expected, these results
provide a formal mathematical basis for the role of the topology in
epidemic dynamics: whereas homogeneity points toward global
cooperation, heterogeneity places the responsibility on particular
communities.
\subsubsection{Numerical experiments}

Let $B\in [0,1]$ be the number representing the budget or capacity for
reducing the edges between communities. For $0<\alpha<1$ we can invest
a fraction of $B$ into reducing the edge connection of a specific
community $I\subseteq[0,1]$ attached to an ultrametric graphon
$W:[0,1]\rightarrow[0,1]$ by replacing:
\[
w(d(x,y))=w(h(I)), \quad x,y\in I
\rightarrow
w(d(x,y))=(1-\alpha(B+\varepsilon))w(h(I)), \quad x,y\in I,
\]
where $\varepsilon>0$ is a small parameter forcing the probability to
be greater than zero independently of $B$.

In this way we reduce the probability of connection between the
children clusters of $I$. The aim of the following experiments is to
show numerical evidence of the effect of different strategies of
cooperation in heterogeneous and homogeneous communities. For the
simulations we consider a binary ultrametric graphon constructed from a
rooted tree of fixed depth $L$. Hence, each internal node
$I\subseteq[0,1]$ is divided into two children $I_0$ and $I_1$. In
order to control the homogeneity and heterogeneity of the communities,
we construct such partition randomly. Consider a Dirichlet
distribution:
\[
(p,1-p) \sim \mathrm{Dirichlet}(c,c),
\]
which is equivalent to $p \sim \mathrm{Beta}(c,c)$. If $I=(a,b)$, the
children intervals are defined by
\[
I_0 = (a,\, a + p(b-a)),
\qquad
I_1 = (a + p(b-a),\, b).
\]
The parameter $c>0$ determines the heterogeneity of the masses: large
$c$ produces nearly equal splits (homogeneous case), while small $c$
generates highly unbalanced partitions. For $x,y\in[0,1]$, let
$d(x,y)$ denote the depth of their lowest common ancestor in the tree.
We define the graphon
\[
W(x,y) = w_{d(x,y)},
\]
where $\{w_\ell\}_{\ell=0}^{L}$ is a monotone sequence; in our
experiments we use:
\[
w_\ell = w_{\min} + (w_{\max}-w_{\min})\left(\frac{\ell}{L}\right)^\gamma,
\qquad \gamma>1,
\]
so that connection probabilities increase with hierarchical proximity.
Random graphs of size $M$ are sampled from the graphon $W(x,y)$, where
we connect pairs of vertices independently with probability
$W(x_i,x_j)$. Define $\rho_{\infty}$ to be
\[
\rho_\infty := \frac{1}{N}\sum_{i=1}^N x_i^\ast,
\]
where $x^\ast \in (0,1)^N$ is the non-trivial equilibrium of
equation~\eqref{eq:SISequationmodel}. Thus, $\rho_\infty = 0$ in the
disease-free regime ($\lambda_1(A)\beta/\delta < 1$), while
$\rho_\infty > 0$ in the endemic regime~\cite{nowzari2016analysis}. We
analyze two strategies of cooperation at different levels.\newline
\textit{Global cooperation.}\newline
The budget $B$ will be equally distributed in the communities at a
given level. Therefore, for any community at level $\ell$ we make the
reduction
\[
w(d(x,y))\rightarrow\left(1-\frac{(B+\varepsilon)}{2^{\ell}}\right)w(d(x,y)),
\quad x,y\in I: I \text{ is at level } \ell.
\]
\textit{Largest community intervention.}
Let $I_{\max}$ be the interval satisfying
$\nu(I_{\max})=\max_i\nu(I_i^{(M)})$. The budget $B$ will be invested
into reducing the edge-connectivity of communities containing
$I_{\max}$, thus making the following reduction
\[
w(d(x,y))\rightarrow (1-(B+\varepsilon))w(d(x,y)),
\quad x,y\in I: I\,\text{is at level }\ell\,\text{and }
I\in\gamma_r(I_{\max}).
\]
We sampled $M=260$ points and take $c\in\{1.6,100.0\}$. The depth of
the underlying binary tree is $L=7$, and the parameters of $W$ are
$w_{\max}=0.67$, $w_{\min}=0.03$ and $\gamma=1.8$. For the
experiments we take $\tau:=N_k\frac{\beta_{N_k}}{\delta_{N_k}}$
taking the integer values $n=4,\dots,20$. Define
\[
\tau_{\max}^{\mathrm{crit}}:=\frac{1}{\max_i\nu(I_i^{(M)})},
\qquad
\tau_{\mathrm{avg}}^{\mathrm{crit}}:=\frac{1}{\sum_i\mu(I_i^{(M)})\nu(I_i^{(M)})}.
\]
\begin{figure}[H]
    \centering
    \includegraphics[width=\linewidth]{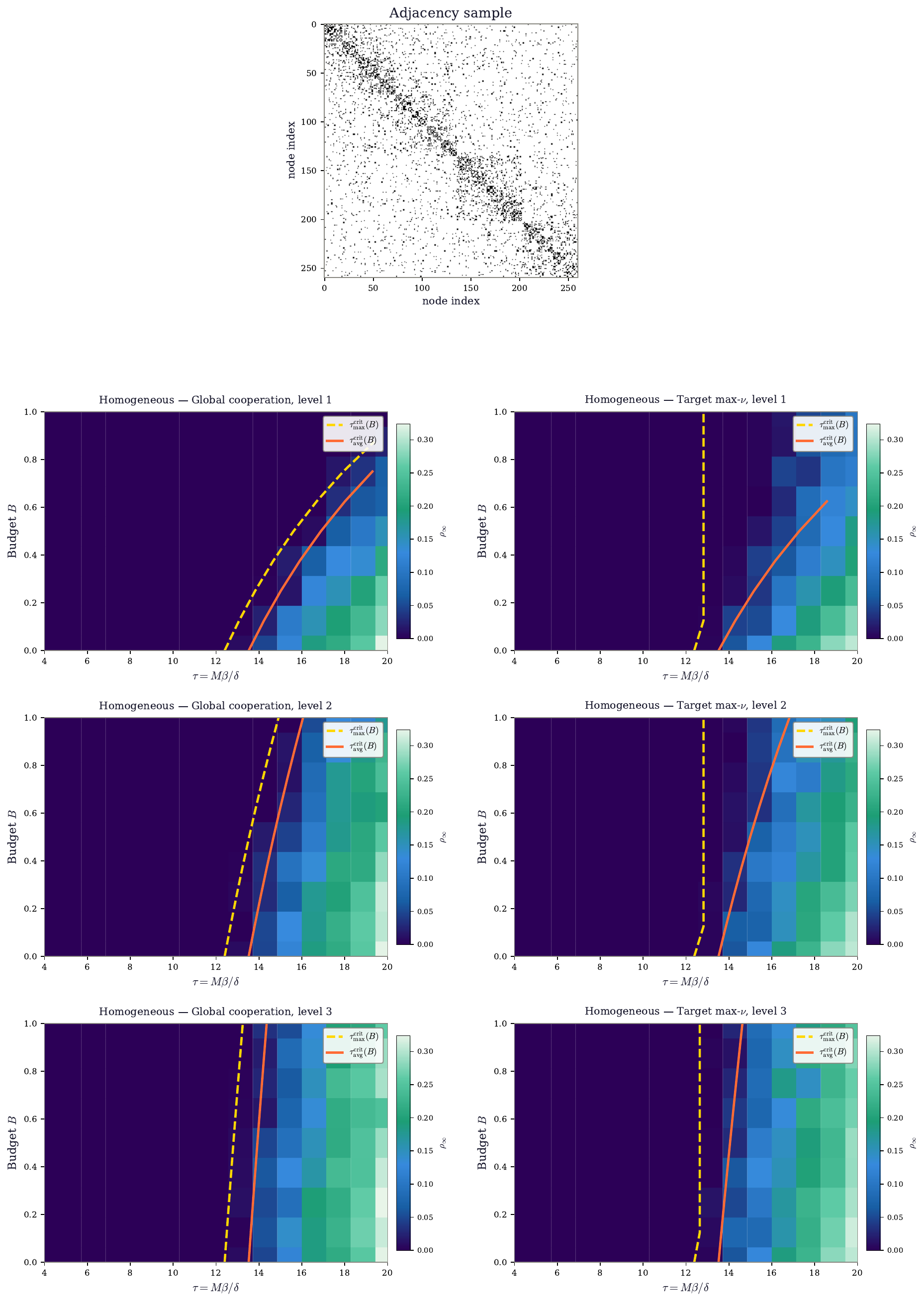}
    \caption{For \(c=100.0\), the resulting graphons are highly homogeneous as shown in the adjacency matrix sample. 
    We present the heat map \(\rho_{\infty}=\rho_{\infty}(\tau,B)\) for the two strategies. The purple area represent all the cases where free-disease is achieved. Left column show the global cooperation intervention on the first three levels, whereas the right column show the largest community intervention. The curves of the values \(\tau_{max}^{crit}\) and \(\tau_{avg}^{crit}\) are presented in each case.}  
    \label{fig:resultshomogeneous}
\end{figure}

\begin{figure}[H]
    \centering
    \includegraphics[width=\linewidth]{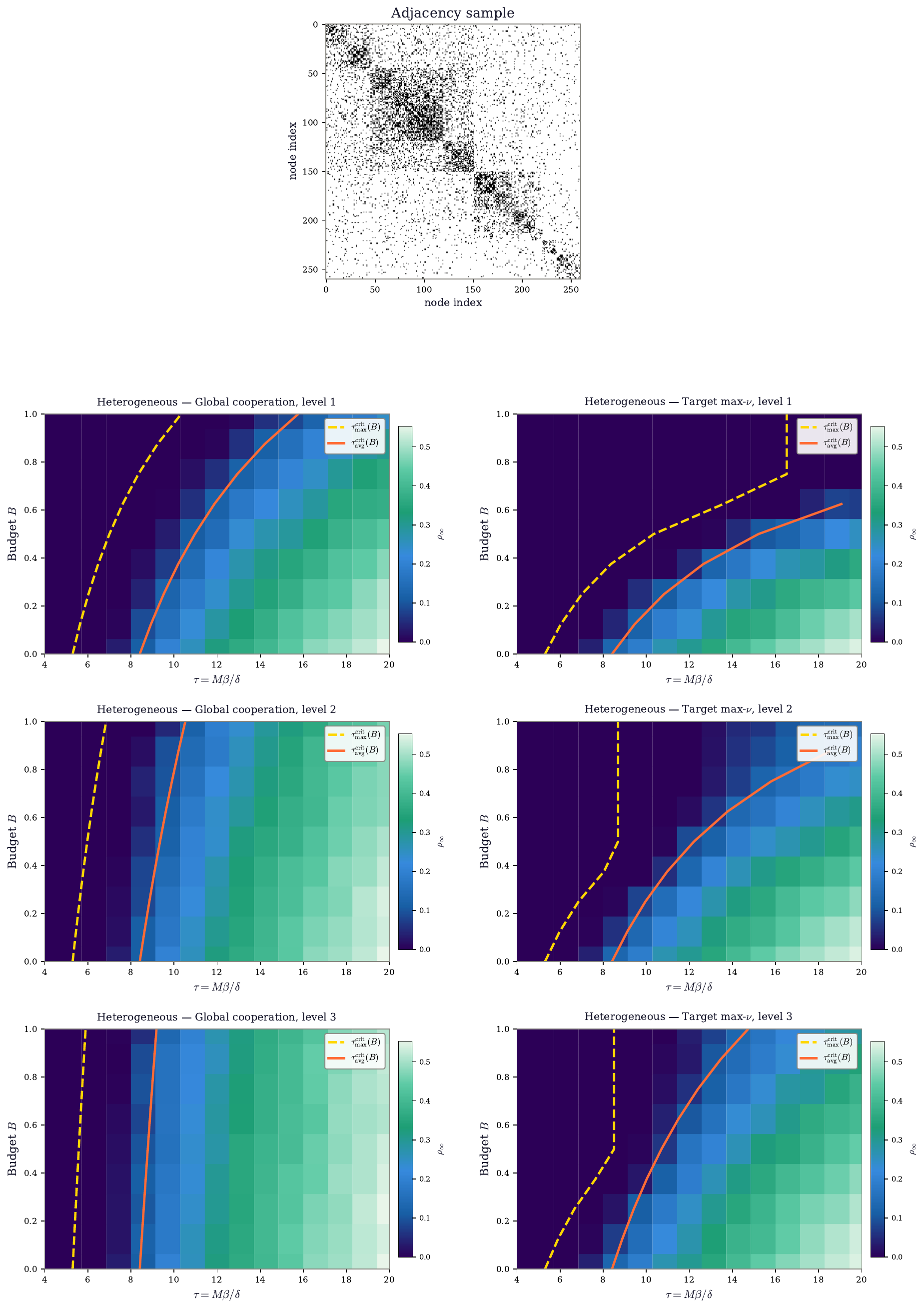}
    \caption{For \(c=1.6\) the sampled graphs have more heterogeneous communities. The heatmap \(\rho_{\infty}=\rho_{\infty}(\tau,B)\) for the two strategies is presented. In this case we see a substantial difference between the two strategies as expected.}
    \label{fig:resultsheterogeneous}
\end{figure}
As expected, for the homogeneous case the global cooperation is more
optimal increasing the possible scenarios where the disease-free
stability is achieved, nevertheless the difference in this case is not
as significant. As shown in Figure~\ref{fig:resultshomogeneous}, the
global cooperation pushes the curve $\tau_{\max}^{\mathrm{crit}}$ more
to the right, implying a greater area of disease-free scenarios. On the
other hand we see that the largest community intervention results in a
bigger gap between the curves and therefore a smaller range of
applicability of Theorem~\ref{thm:SISinequalityultra}. Moreover, since
we are always targeting the community containing $I_{\max}$, which is
fixed, the values of $\tau_{\max}^{\mathrm{crit}}$ become constant at
some point (represented in the plots as a vertical line), maintaining
the area $\tau<\tau_{\max}^{\mathrm{crit}}$ constant and not
optimizing the disease-free region. The global cooperation results in a
better alternative since the average degree is always minimized,
nevertheless the impact is very small for deeper levels.

The heterogeneous case presented in
Figure~\ref{fig:resultsheterogeneous} is completely different.
Unbalanced communities lead to the creation of hotspots (see the
adjacency sample in Figure~\ref{fig:resultsheterogeneous}) with high
degree; therefore, reducing the connectivity per level in the global
cooperation strategy produces only a marginal increase in both
$\tau_{\mathrm{avg}}^{\mathrm{crit}}$ and
$\tau_{\max}^{\mathrm{crit}}$. As before, the performance worsens as
the level becomes deeper. On the other hand, interventions targeting
the largest community lead to better results. Since $I_{\max}$ tends to
be contained in the densest clusters, decreasing the connections of the
clusters along its path $\gamma_r(I_{\max})$ produces a more
pronounced increase in $\tau_{\max}^{\mathrm{crit}}$. In this setting,
the maximum degree effectively persists within communities along the
path $\gamma_r(I_{\max})$ for a significant range of the parameter.
Although the community with the highest average degree eventually shifts
to another cluster, giving rise to the vertical effect, this transition
is delayed.

\section*{Acknowledgements}

 Patrick Bradley is warmly thanked for the many useful conversations we had and for his general advice. Wilson Zúñiga is thanked for introduced me to the concept of graphons. This work is supported by the Deutsche Forschungsgemeinschaft
under project number 469999674.

\bibliography{biblio}

@article{Glasscock2015,
  author  = {Glasscock, Daniel},
  title   = {What is a graphon?},
  journal = {Notices of the American Mathematical Society},
  volume  = {62},
  number  = {1},
  pages   = {46--48},
  year    = {2015}
}

@misc{MoranLedezma2026,
  author       = {Mor{\'a}n Ledezma, Angel Alfredo},
  title        = {Spectral Geometry and Heat Kernels on Phylogenetic Trees},
  year         = {2026},
  howpublished = {arXiv preprint},
  note         = {arXiv:2603.20922},
  doi          = {10.48550/arXiv.2603.20922}
}

@inproceedings{BrinPage1998,
  author    = {Brin, Sergey and Page, Lawrence},
  title     = {Anatomy of a large-scale hypertextual web search engine},
  booktitle = {Proceedings of the Seventh International World Wide Web Conference},
  year      = {1998},
  pages     = {107--117}
}

@article{LangvilleMeyer2004,
  author  = {Langville, Amy N. and Meyer, Carl D.},
  title   = {Deeper inside {PageRank}},
  journal = {Internet Mathematics},
  year    = {2004},
  volume  = {1},
  number  = {3},
  pages   = {335--380},
  doi     = {10.1080/15427951.2004.10129091}
}

@article{LambiottRosvall2012,
  author  = {Lambiotte, Renaud and Rosvall, Martin},
  title   = {Ranking and clustering of nodes in networks with smart
             teleportation},
  journal = {Physical Review E},
  year    = {2012},
  volume  = {85},
  number  = {5},
  pages   = {056107},
  doi     = {10.1103/PhysRevE.85.056107}
}

@article{MasudaPorterLambiotte2017,
  author  = {Masuda, Naoki and Porter, Mason A. and Lambiotte, Renaud},
  title   = {Random walks and diffusion on networks},
  journal = {Physics Reports},
  year    = {2017},
  volume  = {716--717},
  pages   = {1--58},
  doi     = {10.1016/j.physrep.2017.07.007}
}

@article{PetitLambiotte2021,
  author  = {Petit, Julien and Lambiotte, Renaud and Carletti, Timoteo},
  title   = {Random walks on dense graphs and graphons},
  journal = {SIAM Journal on Applied Mathematics},
  year    = {2021},
  volume  = {81},
  number  = {6},
  pages   = {2323--2345},
  doi     = {10.1137/20M1339246}
}

@article{KlimmJonesSchaub2022,
  author  = {Klimm, Florian and Jones, Nick S. and Schaub, Michael T.},
  title   = {Modularity maximization for graphons},
  journal = {SIAM Journal on Applied Mathematics},
  year    = {2022},
  volume  = {82},
  number  = {6},
  doi     = {10.1137/22M1492003}
}

@article{LovaszSzegedy2006,
  author  = {Lov\'{a}sz, L\'{a}szl\'{o} and Szegedy, Bal\'{a}zs},
  title   = {Limits of dense graph sequences},
  journal = {Journal of Combinatorial Theory, Series B},
  year    = {2006},
  volume  = {96},
  number  = {6},
  pages   = {933--957},
  doi     = {10.1016/j.jctb.2006.05.002}
}

@article{BorgsI2008,
  author  = {Borgs, Christian and Chayes, Jennifer T. and
             Lov\'{a}sz, L\'{a}szl\'{o} and S\'{o}s, Vera T. and
             Vesztergombi, Katalin},
  title   = {Convergent sequences of dense graphs {I}: Subgraph
             frequencies, metric properties and testing},
  journal = {Advances in Mathematics},
  year    = {2008},
  volume  = {219},
  number  = {6},
  pages   = {1801--1851},
  doi     = {10.1016/j.aim.2008.07.008}
}

@article{BorgsII2012,
  author  = {Borgs, Christian and Chayes, Jennifer T. and
             Lov\'{a}sz, L\'{a}szl\'{o} and S\'{o}s, Vera T. and
             Vesztergombi, Katalin},
  title   = {Convergent sequences of dense graphs {II}: Multiway cuts
             and statistical physics},
  journal = {Annals of Mathematics},
  year    = {2012},
  volume  = {176},
  number  = {1},
  pages   = {151--219},
  doi     = {10.4007/annals.2012.176.1.2}
}

@book{LovaszBook2012,
  author    = {Lov\'{a}sz, L\'{a}szl\'{o}},
  title     = {Large Networks and Graph Limits},
  series    = {American Mathematical Society Colloquium Publications},
  volume    = {60},
  publisher = {American Mathematical Society},
  address   = {Providence, RI},
  year      = {2012}
}

@article{GaoLuZhou2015,
  author  = {Gao, Chao and Lu, Yu and Zhou, Harrison H.},
  title   = {Rate-optimal graphon estimation},
  journal = {Annals of Statistics},
  year    = {2015},
  volume  = {43},
  number  = {6},
  pages   = {2624--2652},
  doi     = {10.1214/15-AOS1354}
}

@article{KloppTsybakov2017,
  author  = {Klopp, Olga and Tsybakov, Alexandre B. and Verzelen, Nicolas},
  title   = {Oracle inequalities for network models and sparse graphon
             estimation},
  journal = {Annals of Statistics},
  year    = {2017},
  volume  = {45},
  number  = {1},
  pages   = {316--354},
  doi     = {10.1214/16-AOS1454}
}

@article{Civier2019,
  author  = {Civier, Oren and Smith, Robert Elton and Yeh, Chun-Hung
             and Connelly, Alan and Calamante, Fernando},
  title   = {Is removal of weak connections necessary for
             graph-theoretical analysis of dense weighted structural
             connectomes from diffusion {MRI}?},
  journal = {NeuroImage},
  year    = {2019},
  volume  = {194},
  pages   = {68--81},
  doi     = {10.1016/j.neuroimage.2019.02.039}
}

@article{Lambiotte2015,
  author  = {Lambiotte, Renaud and Delvenne, Jean-Charles and Barahona, Mauricio},
  title   = {Random walks, {Markov} processes and the multiscale modular
             organization of complex networks},
  journal = {IEEE Transactions on Network Science and Engineering},
  year    = {2015},
  volume  = {1},
  number  = {2},
  pages   = {76--90},
  doi     = {10.1109/TNSE.2015.2391998}
}

@article{Schaub2012,
  author  = {Schaub, Michael T. and Delvenne, Jean-Charles and
             Yaliraki, Sophia N. and Barahona, Mauricio},
  title   = {Markov dynamics as a zooming lens for multiscale community
             detection: non clique-like communities and the field-of-view limit},
  journal = {PLoS ONE},
  year    = {2012},
  volume  = {7},
  number  = {2},
  pages   = {e32210},
  doi     = {10.1371/journal.pone.0032210}
}

@article{Patelli2020,
  author  = {Patelli, Aurelio and Gabrielli, Andrea and Cimini, Giulio},
  title   = {Generalized {Markov} stability of network communities},
  journal = {Physical Review E},
  year    = {2020},
  volume  = {101},
  number  = {5},
  pages   = {052301},
  doi     = {10.1103/PhysRevE.101.052301}
}

@article{LiZhang2013,
  author  = {Li, Hui-Jia and Zhang, Xiang-Sun},
  title   = {Analysis of stability of community structure across
             multiple hierarchical levels},
  journal = {Europhysics Letters},
  year    = {2013},
  volume  = {103},
  number  = {5},
  pages   = {58002},
  doi     = {10.1209/0295-5075/103/58002}
}

@article{Bonaccorsi2014,
  author  = {Bonaccorsi, Stefano and Ottaviano, Stefania and
             De Pellegrini, Francesco and Socievole, Annalisa and
             Van Mieghem, Piet},
  title   = {Epidemic outbreaks in two-scale community networks},
  journal = {Physical Review E},
  year    = {2014},
  volume  = {90},
  number  = {1},
  pages   = {012810},
  doi     = {10.1103/PhysRevE.90.012810}
}

@article{LiuHu2005,
  author  = {Liu, Zonghua and Hu, Bambi},
  title   = {Epidemic spreading in community networks},
  journal = {Europhysics Letters},
  year    = {2005},
  volume  = {72},
  number  = {2},
  pages   = {315--321},
  doi     = {10.1209/epl/i2004-10550-5}
}

@article{Nadini2018,
  author  = {Nadini, Matthieu and Sun, Kaiyuan and Ubaldi, Enrico and
             Starnini, Michele and Rizzo, Alessandro and Perra, Nicola},
  title   = {Epidemic spreading in modular time-varying networks},
  journal = {Scientific Reports},
  year    = {2018},
  volume  = {8},
  pages   = {2908},
  doi     = {10.1038/s41598-018-20908-x}
}

@article{Odor2021,
  author  = {{\'O}dor, G\'eza},
  title   = {Nonuniversal power-law dynamics of susceptible infected
             recovered models on hierarchical modular networks},
  journal = {Physical Review E},
  year    = {2021},
  volume  = {103},
  number  = {6},
  pages   = {062112},
  doi     = {10.1103/PhysRevE.103.062112}
}

@article{BorzoneMas2023,
  author  = {Borzone Mas, Dalmiro and Scarabotti, Pablo A. and Vaschetto, Pablo A.
             and Alvarenga, Patricio and Vazquez, Martin and Arim, Mat{\'\i}as},
  title   = {On traits matching and the modular organization of food web
             and occurrence networks},
  journal = {Journal of Animal Ecology},
  year    = {2023},
  doi     = {10.1111/1365-2656.13900}
}

@article{Porter2009,
  author  = {Porter, Mason A. and Onnela, Jukka-Pekka and Mucha, Peter J.},
  title   = {Communities in networks},
  journal = {Notices of the American Mathematical Society},
  year    = {2009},
  volume  = {56},
  number  = {11},
  pages   = {1082--1097}
}

@article{GirvanNewman2002,
  author    = {Girvan, Michelle and Newman, Mark E. J.},
  title     = {Community structure in social and biological networks},
  journal   = {Proceedings of the National Academy of Sciences},
  year      = {2002},
  volume    = {99},
  number    = {12},
  pages     = {7821--7826},
  doi       = {10.1073/pnas.122653799}
}

@article{NewmanGirvan2004,
  author    = {Newman, Mark E. J. and Girvan, Michelle},
  title     = {Finding and evaluating community structure in networks},
  journal   = {Physical Review E},
  year      = {2004},
  volume    = {69},
  number    = {2},
  pages     = {026113},
  doi       = {10.1103/PhysRevE.69.026113}
}

@article{Fortunato2010,
  author    = {Fortunato, Santo},
  title     = {Community detection in graphs},
  journal   = {Physics Reports},
  year      = {2010},
  volume    = {486},
  number    = {3--5},
  pages     = {75--174},
  doi       = {10.1016/j.physrep.2009.11.002}
}

@article{Ravasz2002,
  author    = {Ravasz, Erzs\'{e}bet and Somera, Anna L. and Mongru, Dale A.
               and Oltvai, Zolt\'{a}n N. and Barab\'{a}si, Albert-L\'{a}szl\'{o}},
  title     = {Hierarchical organization of modularity in metabolic networks},
  journal   = {Science},
  year      = {2002},
  volume    = {297},
  number    = {5586},
  pages     = {1551--1555},
  doi       = {10.1126/science.1073374}
}

@article{SalesPardo2007,
  author    = {Sales-Pardo, Marta and Guimer\`{a}, Roger and
               Moreira, Andr\'{e} A. and Amaral, Lu\'{i}s A. Nunes},
  title     = {Extracting the hierarchical organization of complex systems},
  journal   = {Proceedings of the National Academy of Sciences},
  year      = {2007},
  volume    = {104},
  number    = {39},
  pages     = {15224--15229},
  doi       = {10.1073/pnas.0703740104}
}

@article{Clauset2008,
  author    = {Clauset, Aaron and Moore, Cristopher and Newman, Mark E. J.},
  title     = {Hierarchical structure and the prediction of missing links in networks},
  journal   = {Nature},
  year      = {2008},
  volume    = {453},
  number    = {7191},
  pages     = {98--101},
  doi       = {10.1038/nature06830}
}

@article{Schaub2023,
  author    = {Schaub, Michael T. and Li, Jiaze and Peel, Leto},
  title     = {Hierarchical community structure in networks},
  journal   = {Physical Review E},
  year      = {2023},
  volume    = {107},
  number    = {5},
  pages     = {054305},
  doi       = {10.1103/PhysRevE.107.054305}
}

@article{Dreveton2025,
  author    = {Dreveton, Maximilien and Kuroda, Daichi and
               Grossglauser, Matthias and Thiran, Patrick},
  title     = {When does bottom-up beat top-down in hierarchical community detection?},
  journal   = {Journal of the American Statistical Association},
  year      = {2025},
  volume    = {00},
  pages     = {1--12},
  doi       = {10.1080/01621459.2025.2569711}
}

@article{Rezvani2022,
  author    = {Rezvani, Mojtaba and Kazemian, Fazeleh Sadat},
  title     = {A survey on hierarchical community detection in large-scale complex networks},
  journal   = {AUT Journal of Mathematics and Computing},
  year      = {2022},
  volume    = {3},
  number    = {2},
  pages     = {173--184},
  doi       = {10.22060/AJMC.2022.21715.1103}
}

@article{Meunier2009,
  author    = {Meunier, David and Lambiotte, Renaud and Fornito, Alex and
               Ersche, Karen D. and Bullmore, Edward T.},
  title     = {Hierarchical modularity in human brain functional networks},
  journal   = {Frontiers in Neuroinformatics},
  year      = {2009},
  volume    = {3},
  pages     = {37},
  doi       = {10.3389/neuro.11.037.2009}
}

@article{Meunier2010,
  author    = {Meunier, David and Lambiotte, Renaud and Bullmore, Edward T.},
  title     = {Modular and hierarchically modular organization of brain networks},
  journal   = {Frontiers in Neuroscience},
  year      = {2010},
  volume    = {4},
  pages     = {200},
  doi       = {10.3389/fnins.2010.00200}
}

@article{Wierzbinski2023,
  author    = {Wierzbinski, Marcin and Falc\'{o}-Roget, Joan and Crimi, Alessandro},
  title     = {Community detection in brain connectomes with hybrid quantum computing},
  journal   = {Scientific Reports},
  year      = {2023},
  volume    = {13},
  pages     = {1--12},
  doi       = {10.1038/s41598-023-35040-2}
}

@article{Simon1962,
  author    = {Simon, Herbert A.},
  title     = {The architecture of complexity},
  journal   = {Proceedings of the American Philosophical Society},
  year      = {1962},
  volume    = {106},
  number    = {6},
  pages     = {467--482}
}

@book{networksnewman,
    author = {Newman, Mark},
    title = {Networks},
    publisher = {Oxford University Press},
    year = {2018},
    month = {07},
    isbn = {9780198805090},
    doi = {10.1093/oso/9780198805090.001.0001},
    url = {https://doi.org/10.1093/oso/9780198805090.001.0001},
}

@article{communitydetection,
title = {A comprehensive review of community detection in graphs},
journal = {Neurocomputing},
volume = {600},
pages = {128169},
year = {2024},
issn = {0925-2312},
doi = {https://doi.org/10.1016/j.neucom.2024.128169},
url = {https://www.sciencedirect.com/science/article/pii/S0925231224009408},
author = {Jiakang Li and Songning Lai and Zhihao Shuai and Yuan Tan and Yifan Jia and Mianyang Yu and Zichen Song and Xiaokang Peng and Ziyang Xu and Yongxin Ni and Haifeng Qiu and Jiayu Yang and Yutong Liu and Yonggang Lu},
}

@article{nowzari2016analysis,
  author    = {Nowzari, Cameron and Preciado, Victor M. and Pappas, George J.},
  title     = {Analysis and Control of Epidemics: A Survey of Spreading Processes on Complex Networks},
  journal   = {IEEE Control Systems Magazine},
  volume    = {36},
  number    = {1},
  pages     = {26--46},
  month     = feb,
  year      = {2016},
  doi       = {10.1109/MCS.2015.2495000}
}

@article{vizuete2020graphon_sis,
  author    = {Vizuete, Renato and Frasca, Paolo and Garin, Federica},
  title     = {Graphon-Based Sensitivity Analysis of {SIS} Epidemics},
  journal   = {IEEE Control Systems Letters},
  volume    = {4},
  number    = {3},
  pages     = {542--547},
  month     = jul,
  year      = {2020},
  doi       = {10.1109/LCSYS.2020.2971021}
}

@article{vonluxburg2014hitting,
  author    = {von Luxburg, Ulrike and Radl, Agnes and Hein, Matthias},
  title     = {Hitting and Commute Times in Large Random Neighborhood Graphs},
  journal   = {Journal of Machine Learning Research},
  volume    = {15},
  pages     = {1751--1798},
  year      = {2014}
}

@article{vizuete2021laplacian,
  author    = {Vizuete, Renato and Garin, Federica and Frasca, Paolo},
  title     = {The {L}aplacian Spectrum of Large Graphs Sampled From Graphons},
  journal   = {IEEE Transactions on Network Science and Engineering},
  volume    = {8},
  number    = {2},
  pages     = {1711--1721},
  month     = {apr--jun},
  year      = {2021},
  doi       = {10.1109/TNSE.2021.3069675}
}

@article{Fiedler1973,
author = {Fiedler, Miroslav},
journal = {Czechoslovak Mathematical Journal},
language = {eng},
number = {2},
pages = {298-305},
publisher = {Institute of Mathematics, Academy of Sciences of the Czech Republic},
title = {Algebraic connectivity of graphs},
url = {http://eudml.org/doc/12723},
volume = {23},
year = {1973},
}

@article{
girvan2002community,
author = {M. Girvan  and M. E. J. Newman },
title = {Community structure in social and biological networks},
journal = {Proceedings of the National Academy of Sciences},
volume = {99},
number = {12},
pages = {7821-7826},
year = {2002},
doi = {10.1073/pnas.122653799},
URL = {https://www.pnas.org/doi/abs/10.1073/pnas.122653799},
eprint = {https://www.pnas.org/doi/pdf/10.1073/pnas.122653799},
}

@article{LOVASZ2006933,
title = {Limits of dense graph sequences},
journal = {Journal of Combinatorial Theory, Series B},
volume = {96},
number = {6},
pages = {933-957},
year = {2006},
issn = {0095-8956},
doi = {https://doi.org/10.1016/j.jctb.2006.05.002},
url = {https://www.sciencedirect.com/science/article/pii/S0095895606000517},
author = {László Lovász and Balázs Szegedy},
}

@article{bradleymoranshape,
    author = {Bradley, Patrick Erik and Ledezma, Angel Moran},
    title = {Hearing shapes via p-adic Laplacians},
    journal = {Journal of Mathematical Physics},
    volume = {64},
    number = {11},
    pages = {113502},
    year = {2023},
    month = {11},
    issn = {0022-2488},
    doi = {10.1063/5.0152374},
    
}

@article{MoranLedezma2025,
  author    = {Mor\'an Ledezma, \'Angel},
  title     = {Time-varying energy landscapes and temperature paths: dynamic transition rates in locally ultrametric complex systems},
  journal   = {Journal of Statistical Mechanics: Theory and Experiment},
  year      = {2025},
  volume    = {2025},
  pages     = {113501},
  doi       = {10.1088/1742-5468/ae120f},
  url       = {https://stacks.iop.org/JSTAT/2025/113501},
  publisher = {IOP Publishing},
  note      = {Open Access}
}

@article{rammal1986ultrametricity,
  author  = {Rammal, R. and Toulouse, G. and Virasoro, M. A.},
  title   = {Ultrametricity for Physicists},
  journal = {Reviews of Modern Physics},
  volume  = {58},
  number  = {3},
  pages   = {765--788},
  year    = {1986}
}

@article{BramburgerHolzer2023,
  author    = {Jason Bramburger and Matt Holzer},
  title     = {Pattern Formation in Random Networks Using Graphons},
  journal   = {SIAM Journal on Mathematical Analysis},
  volume    = {55},
  number    = {3},
  pages     = {2150--2185},
  year      = {2023},
  doi       = {10.1137/21M1455875}
}

@article{daviskahntheorem,
    author = {Yu, Y. and Wang, T. and Samworth, R. J.},
    title = {A useful variant of the Davis–Kahan theorem for statisticians},
    journal = {Biometrika},
    volume = {102},
    number = {2},
    pages = {315-323},
    year = {2014},
    month = {04},
    issn = {0006-3444},
    doi = {10.1093/biomet/asv008},
    url = {https://doi.org/10.1093/biomet/asv008},
    eprint = {https://academic.oup.com/biomet/article-pdf/102/2/315/9642505/asv008.pdf},
}

@article{PhaseTransitionsSpectralCD,
  title        = {Phase Transitions in Spectral Community Detection},
  author       = {Chen, Pin-Yu and Hero, Alfred O.},
  journal      = {IEEE Transactions on Signal Processing},
  year         = {2015},
  volume       = {63},
  number       = {16},
  pages        = {4339--4352},
  doi          = {10.1109/TSP.2015.2442958}
}

@book{vanKampen2007,
  author    = {N. G. van Kampen},
  title     = {Stochastic Processes in Physics and Chemistry},
  edition   = {3},
  year      = {2007},
  publisher = {North-Holland},
  address   = {Amsterdam},
  series    = {North-Holland Personal Library},
  isbn      = {978-0-444-52965-7},
  doi       = {10.1016/B978-0-444-52965-7.X5000-4}
}

@article{ZunigaNetworks,
author={W. {Z\'{u}\~{n}iga-Galindo}},
title={Reaction-diffusion equations on complex networks and {Turing} patterns, via $p$-adic analysis},
year={2020},
journal={Journal of Mathematical Analysis and Applications},
volume={491},
number={1},
pages={124239}
}

@article{Zuniga2022,
title={Ultrametric diffusion, rugged energy landscapes and transition networks},
author={W.A. 
       Zúñiga-Galindo},
journal={Physica A: Statistical Mechanics and its Applications},
volume={597}, 
pages={127221},
year={2022}
}

@article{ZZ2023,
author={Zúñiga-Galindo, W.A. and
Zambrano-Luna, B.A.}, title={Hierarchical
{Wilson–Cowan} Models and
Connection Matrices}, journal={Entropy},
year={2023},
volume={25}, 
pages={949}
}

\end{document}